
\documentclass[10pt,oneside,leqno]{amsart}
\usepackage{amsxtra}
\usepackage{amsopn}
\usepackage{color}
\usepackage{amsmath,amsthm,amssymb}
\usepackage{amscd}
\usepackage{amsfonts}
\usepackage{latexsym}
\usepackage{verbatim}
\usepackage{graphicx}
\usepackage{graphics}
\usepackage{multirow}
\usepackage{lscape}
\usepackage{nicefrac}
\usepackage{mathabx}
\usepackage[all]{xy}
\usepackage{float}
\usepackage[table]{xcolor}
\usepackage{colortbl}
\usepackage{hhline}


\usepackage[hypertexnames=false,
 backref=UseNone,
    pdftex,
    pdfpagemode=UseNone,
    breaklinks=true,
    extension=pdf,
    colorlinks=true,
    linkcolor=blue,
    citecolor=blue,
    urlcolor=blue,
]
{hyperref}


\theoremstyle{plain}
\newtheorem{theorem}{Theorem}[section]
\newtheorem{definition}[theorem]{Definition}
\newtheorem{lemma}[theorem]{Lemma}
\newtheorem{proposition}[theorem]{Proposition}
\newtheorem{corollary}[theorem]{Corollary}
\newtheorem{remark}[theorem]{Remark}
\newtheorem{notation}[theorem]{Notation}

\newtheorem{remark-question}[section]{Remark-Question}

\newcommand\C{{\mathbb C}}


\newcommand\fra{{\mathfrak a}} 

\newcommand\frf{{\mathfrak f}}
\newcommand\frg{{\mathfrak g}}

\newcommand\gc{{\frg_{\mathbb C}}}
\newcommand\Real{{\mathfrak R}{\frak e}\,} 
\newcommand\Imag{{\mathfrak I}{\frak m}\,}

\newcommand\db{{\bar{\partial}}}

\DeclareMathOperator{\sign}{sign}

\definecolor{fondo}{rgb}{0.93,0.93,0.93}

\definecolor{m}{rgb}{0.9,0,0.9}

\sloppy


\setlength{\textwidth}{160mm}
\setlength{\textheight}{224mm}
\setlength{\hoffset}{-20mm}
\setlength{\voffset}{-20mm}
\setlength{\headsep}{15mm}

\makeatletter
\renewcommand*{\eqref}[1]{%
  \hyperref[{#1}]{\textup{\tagform@{\ref*{#1}}}}%
}
\makeatother


\begin{document}
\title[]{Complex structures on nilpotent Lie algebras\\ with one-dimensional center}
\subjclass[2010]{Primary 17B30; Secondary 53C30, 53C15.
}

\author{Adela Latorre}
\address[A. Latorre]{Departamento de Matem\'atica Aplicada,
Universidad Polit\'ecnica de Madrid,
Avda. Juan de Herrera 4,
28040 Madrid, Spain}
\email{adela.latorre@upm.es}

\author{Luis Ugarte}
\address[L. Ugarte]{Departamento de Matem\'aticas\,-\,I.U.M.A.\\
Universidad de Zaragoza\\
Campus Plaza San Francisco\\
50009 Zaragoza, Spain}
\email{ugarte@unizar.es}

\author{Raquel Villacampa}
\address[R. Villacampa]{Centro Universitario de la Defensa\,-\,I.U.M.A., Academia General
Mili\-tar, Crta. de Huesca s/n. 50090 Zaragoza, Spain}
\email{raquelvg@unizar.es}


\begin{abstract}
We classify the nilpotent Lie algebras of real dimension eight and minimal center that admit a complex structure. Furthermore, for every such nilpotent Lie algebra $\frg$, we describe the space of complex structures on $\frg$ up to isomorphism.
As an application, the nilpotent Lie algebras 
having a 
non-trivial abelian $J$-invariant ideal are classified up to eight dimensions.
\end{abstract}

\maketitle

\setcounter{tocdepth}{2}      

\section{Introduction}

In the last decades, the study of complex nilmanifolds has proved to be very useful in the understanding of several 
aspects of compact complex manifolds.
By a complex nilmanifold we mean a compact quotient of a simply connected nilpotent Lie group~$G$ endowed with
a \emph{left invariant} complex structure $J$ (namely, defined on the Lie algebra $\frg$ of $G$) by a cocompact discrete subgroup $\Gamma$. Hence, the study of nilpotent Lie algebras $\frg$ with complex structures~$J$ constitutes a crucial step in the construction of complex nilmanifolds.

Some recent results where complex nilmanifolds play an important role can be found in 
\cite{Ang-libro}--\cite{BR}, \cite{CF}, \cite{CFP}, \cite{DF}--\cite{FV-correction}, \cite{MPPS}, \cite{OOS}--\cite{RTW} and the references therein. These include cohomological aspects of compact complex manifolds (Dolbeault, Bott-Chern, Aeppli cohomologies and Fr\"olicher spectral sequence), existence of different classes of Hermitian metrics (as SKT, locally conformal K\"ahler or balanced metrics, among others),
as well as the
behaviour 
of complex properties under holomorphic deformations.
 It is worth to note that in the previous results the complex structures $J$ on the Lie algebras $\frg$ underlying the nilmanifolds usually satisfy $Z(\frg)\cap J\left(Z(\frg)\right)\neq\{0\}$, being $Z(\frg)$ the center of $\frg$. These complex structures are known as \emph{quasi-nilpotent} (see  Definition~\ref{tipos_J}) and include complex-parallelizable as well as abelian complex structures.
By \cite[Section 2]{LUV-SnN}, any $\frg$ with a quasi-nilpotent complex structure can be constructed as a certain extension of a lower dimensional nilpotent Lie algebra endowed with a complex structure. Therefore, the `essentially new' complex structures $J$ that arise in each
even real dimension are those for which
the only $J$-invariant subspace of the  center $Z(\frg)$ is the trivial one.
These are called \emph{strongly non-nilpotent}, or \emph{SnN} for short, complex structures.

Although the complex geometry of nilmanifolds endowed with SnN complex structures still remains to be studied in general, some partial results have been obtained. 
For instance, in \cite{LUV-complex,LUV-gen-complex} several families of such complex nilmanifolds are constructed, 
allowing to prove the
existence of infinitely many real homotopy types of nilmanifolds with 
balanced
Hermitian metrics 
and 
generalized complex structures of any type, respectively.
Moreover, nilmanifolds endowed with SnN complex structures admitting neutral Calabi-Yau metrics with interesting deformation properties are given in \cite{LU-pK}.

A first step to investigate the properties of nilpotent Lie algebras with SnN complex structure is carried out in \cite{LUV-SnN}, where several algebraic constraints to their existence in terms of the ascending central series of~$\frg$ are found.
For instance,
the nilpotency step $s$ of the Lie algebra $\frg$ satisfies $s\geq 3$,
and
the dimension of the center is bounded by $\dim Z(\frg)\leq n-3$, where $2n=\dim \frg\geq 8$.
Somehow, the existence of SnN complex structures $J$ on $\frg$ 
seems to require a large nilpotency step and a small center, which gives the idea that these $J$'s might be very twisted.

It is known that SnN complex structures do not exist in real dimension $\leq 4$, and that there are only two non-isomorphic 6-dimensional nilpotent Lie algebras admitting such structures
(see \cite{COUV} for details).
However, there is no classification in higher (even) dimensions, and  
our goal in this paper is to provide the  complete list of 8-dimensional nilpotent Lie algebras $\frg$ that admit SnN complex structures up to isomorphism.
We recall that real nilpotent Lie algebras are classified only up to dimension 7. So, our method will consist in two steps: in the first one we will obtain a classification of SnN complex structures up to equivalence; then, we will achieve the classification of real Lie algebras admitting such structures.
Notice that, by the previous upper bound for $Z(\frg)$, 
these Lie algebras must have $1$-dimensional center.

\vskip.2cm

Let $\frg$ be an 8-dimensional nilpotent Lie algebra, NLA for short, with $1$-dimensional center admitting a complex structure $J$. 
We first prove that there is a partition into two families (labeled as~I and~II) according to the value of an algebraic invariant associated to the pair $(\frg,J)$ 
(see Proposition~\ref{dolb-0-1} and Definition~\ref{familias-I-II}). This allows us to find 
an appropriate reduction of the 
structure equations that is suitable for their classification up to equivalence. Indeed, 
Theorem~\ref{main-theorem}
provides a classification of complex structures~$J$ on 8-dimensional NLAs~$\frg$ with $1$-dimensional center.
The proof of this result is given in Section~\ref{reduc-fam-FI} for the case that~$J$ belongs to the Family~I, and in Section~\ref{reduc-fam-FII} for the Family~II.
We recall that similar results were obtained on 6-dimensional nilpotent Lie algebras
 in the case of abelian complex structures \cite{ABD} and of any other type of complex structure \cite{COUV}. Also complex parallelizable structures up to complex dimension seven are classified in~\cite{G} (see also \cite{Nakamura}). 
 
We then provide the relation between the equivalence classes of complex structures on $8$-dimensional nilpotent Lie algebras $\frg$ with $1$-dimensional center and the ascending central series of $\frg$ (see Tables~\ref{tab:valores-complejos} and~\ref{tab:valores-complejos2} in Theorem~\ref{main-theorem}). 
This is a crucial step for the proof of the following classification result, 
which is the main result of this paper:

\begin{theorem}\label{intro-main-theorem-2}
{\textbf{(Classification of nilpotent Lie algebras)}}
Let $\frg$ be an 8-dimensional NLA with $1$-dimensional center.
Then, $\frg$ has a complex structure if and only if it is isomorphic to one (and only one) in the following list:

\vskip.2cm

$
\begin{array}{rl}
&\mathfrak \frg_{1}^{\gamma} = (0^5,\, 13+15+24,\, 14-23+25,\, 16+27+\gamma\!\cdot\! 34), \
	\text{where }\gamma\in\{0,1\},\\[5pt]
&\mathfrak \frg_2^{\alpha} = (0^4,\, 12,\, 13+15+24,\, 14-23+25,\, 16+27+\alpha\!\cdot\! 34), \
	\text{where } \alpha\in\mathbb R, \\[5pt]
&\mathfrak \frg_3^{\gamma} = (0^4,\,12,\,
	13+\gamma\!\cdot\! 15+25,\, 15+24+\gamma\!\cdot\! 25,\, 16+27), \
	\text{ where } \gamma\in\{0,1\}, \\[6pt]
&\mathfrak \frg_4^{\alpha,\,\beta} = (0^4,\,12,\, 15+(\alpha\!+\!1)\!\cdot\! 24,\,
	\, (\alpha\!-\!1)\!\cdot\! 14-23+(\beta\!-\!1)\!\cdot\! 25,\,
	16+27+34-2\!\cdot\! 45), \\[4pt]
	& \hskip1cm \text{ where } (\alpha, \beta)\in\mathbb R^*\times \mathbb R^{+} \text{ or }\
	\mathbb R^{+}\times \{0\}, \\[5pt]
&\mathfrak \frg_5 = (0^4,\,2\!\cdot\! 12,\,14-23,\,13+24,\,16+27+35),\\[5pt]
&\mathfrak \frg_6 = (0^4,\,2\!\cdot\! 12,\,14+15-23,\,13+24+25,\,16+27+35),\\[5pt]
&\mathfrak \frg_7 = (0^5,\,15,\,25,\,16+27+34),\\[5pt]
&\mathfrak \frg_8 = (0^4,\,12,\,15,\,25,\,16+27+34),\\[5pt]
&\mathfrak \frg_9^{\gamma} = (0^3,\, 13,\, 23,\, 35,\, \gamma\!\cdot\! 12-34,\, 16+27+45),\
	\text{where }\gamma\in\{0,1\},\\[5pt]
&\mathfrak \frg_{10}^{\gamma} = (0^3,\, 13,\, 23,\, 14+25,\, 15+24,\, 16+ \gamma\!\cdot\! 25 +27),\
	\text{where }\gamma\in\{0,1\},\\[5pt]
&\mathfrak \frg_{11}^{\alpha, \beta} = \big(0^3,\, 13,\, 23,\, 14+25-35,\, \alpha \!\cdot\! 12+15+24+34,\, 16+27-45-\beta(2\!\cdot\! 25 + 35)\big),\\[4pt]
& \hskip1cm \text{ where } (\alpha, \beta)=(0,0), (1,0),(0,1) \text{ or }\ (\alpha, 1) \text{ with } \alpha\in\mathbb R^{+}, \\[5pt]
&\mathfrak \frg_{12}^{\gamma} = (0^2,\, 12,\, 13,\, 23,\, 14+25,\, 15+24,\, 16+27+ \gamma\!\cdot\! 25),\
	\text{where }\gamma\in\{0,1\}.
\end{array}
$
\end{theorem}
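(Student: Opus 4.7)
The plan is to deduce Theorem~\ref{intro-main-theorem-2} from Theorem~\ref{intro-main-theorem-1} by passing from each complex structure equation to the underlying real Lie algebra, and then to verify that the resulting list is irredundant. More precisely, since every $8$-dimensional NLA $\frg$ with $1$-dimensional center admitting a complex structure $J$ must be SnN by the bound $\dim Z(\frg)\leq n-3$ recalled in the introduction, the pair $(\frg,J)$ falls in Family~I or Family~II of Theorem~\ref{intro-main-theorem-1}. Writing $\omega^k=e^{2k-1}+i\,e^{2k}$ for a real basis $\{e^j\}_{j=1}^8$ of $\frg^*$, and expanding $d\omega^k$ in real and imaginary parts, one reads off the real structure equations of $\frg$. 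This is a purely mechanical, tuple-by-tuple calculation that yields, for each admissible $(\varepsilon,\nu,a,b)$ in Family~I and each $(\varepsilon,\mu,\nu,a,b)$ in Family~II, a candidate among $\frg_1^\gamma,\ldots,\frg_{12}^\gamma$.

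The matching step is guided by the ascending central series data of Tables~\ref{tab:valores-complejos} and \ref{tab:valores-complejos2} already produced in Section~\ref{complex-classif}: the dimensions $(\dim\frg_k)_{k\geq 1}$ of the ascending central series, the nilpotency step $s$, and the dimension of the derived algebra $[\frg,\frg]$ split the parameter region into groups, each corresponding to exactly one $\frg_i$. Within a group, one chooses a convenient change of real basis (typically rational-coefficient and essentially forced by the center, the kernels of $\mathrm{ad}\,e_j$, and the position of the $J$-invariant filtration) to bring the computed equations into the normal form written in the statement. The tuples of Theorem~\ref{intro-main-theorem-1} have already been reduced to canonical representatives, so each tuple produces a unique $\frg_i$ with its specific value of the discrete parameter $\gamma$ or the continuous parameters $\alpha,\beta$.

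The main obstacle is proving that the $12$ families $\frg_i$ (with their stated parameter ranges) are pairwise non-isomorphic as real Lie algebras, since no general classification of $8$-dimensional real NLAs is available. I would handle this with a cascade of increasingly fine invariants: first the sequence of dimensions $(\dim\frg_k)_k$ of the ascending central series together with $(\dim\frg^{(k)})_k$ of the descending central series; this already separates most $\frg_i$ from each other. For those remaining in the same \emph{type} (same pair of sequences), I would use further invariants preserved under Lie algebra automorphisms: the dimension of the space of derivations, the rank and signature of the Killing-type form induced on each graded piece, and—most importantly—the algebraic-geometric invariants of the map $\Lambda^2\frg^*\to\frg^*$ given by~$d$, e.g.\ the $\GL(\frg)$-orbit of its matrix of ranks on natural subspaces. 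Within each family $\frg_i^{\alpha,\beta}$ with continuous parameters, the isomorphism problem reduces to an explicit system of polynomial equations on the entries of a candidate automorphism; solving this system yields the precise identifications that justify the parameter ranges stated (the symmetries $\alpha\leftrightarrow -\alpha$, $\beta\leftrightarrow |\beta|$, etc., already encoded in the admissible tuples of Theorem~\ref{intro-main-theorem-1}).

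Finally, the converse direction—that each $\frg_i$ in the list does admit a complex structure—is obtained for free from the construction above: the choice of basis inverting $\omega^k=e^{2k-1}+i\,e^{2k}$ equips each $\frg_i$ with an explicit $J$ realizing one of the tuples of Theorem~\ref{intro-main-theorem-1}. This closes the equivalence and proves Theorem~\ref{intro-main-theorem-2}; I would summarize the correspondence tuple~$\leftrightarrow\frg_i$ in a single table at the end of Section~\ref{complex-classif} so that it can be cross-checked against Tables~\ref{tab:valores-complejos} and~\ref{tab:valores-complejos2}.
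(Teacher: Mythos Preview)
Your overall architecture matches the paper's: derive the real structure equations from the complex tuples of Theorem~\ref{intro-main-theorem-1}, sort the resulting algebras by ascending type (Tables~\ref{tab:valores-complejos} and~\ref{tab:valores-complejos2}), then separate the remaining cases by finer invariants and, when those fail, by direct analysis of candidate isomorphisms. The paper carries this out in Sections~\ref{clasif-real-FamI} and~\ref{clasif-real-FamII}, with the explicit real bases recorded in Appendices~\ref{A} and~\ref{B} (these are far from the naive $\omega^k=e^{2k-1}+i\,e^{2k}$; substantial rescalings and mixings are needed to reach the stated normal forms).

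The main divergence is in the choice of separating invariants. You propose the dimension of the derivation algebra and a ``Killing-type form on graded pieces''; note that the Killing form of any nilpotent Lie algebra vanishes identically, so that particular tool is unavailable unless you mean something different (e.g.\ a nondegenerate bilinear form built from the bracket on associated graded quotients, which you would have to define and justify). The paper instead uses the descending type $(\dim\frg^k)_k$, the second Betti number $b_2(\frg)$, and the number $n_I(\frg)$ of functionally independent Casimir operators (i.e.\ $\dim\frg$ minus the generic rank of the coadjoint representation). These turn out to be enough to isolate most pairs, but \emph{not all}: for the ascending type $(1,3,6,8)$ five residual cases (Proposition~\ref{demostrar}) require exactly the direct polynomial analysis of isomorphisms you describe, constrained first by an adapted-basis lemma (Lemma~\ref{lema-F}) that forces the matrix of any isomorphism to be block-triangular with respect to the ascending central series and the derived algebra. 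A similar direct argument handles $\frg_{12}^0\not\cong\frg_{12}^1$ in Family~II. So your plan is sound, but be aware that the invariant cascade alone will not finish the job; the explicit isomorphism computations are unavoidable for several pairs.
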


For the description of the nilpotent Lie algebras in Theorem~\ref{intro-main-theorem-2} we use the standard abbreviated notation (see Notation~\ref{notacion-lista-NLAs} for details).
We note that the first eight families of Lie algebras, i.e. $\frg_1^{\gamma},\ldots,\frg_8$, are those having complex structures in Family I, whereas the complex structures on $\frg_9^{\gamma},\ldots,\frg_{12}^{\gamma}$ belong to Family~II.
The list above
is ordered according to the ascending type of the nilpotent Lie algebras.
The proof of
Theorem~\ref{intro-main-theorem-2} is given in Section~\ref{clasif-real-FamI-II} (see Section~\ref{clasif-real-FamI} for the Lie algebras underlying Family I and Section~\ref{clasif-real-FamII} for those underlying Family II).

\vskip.2cm

The precise relation between the Lie algebras in Theorem~\ref{intro-main-theorem-2} and the classification of complex structures (Theorem~\ref{main-theorem}) can be found in Tables~\ref{tab:cambios-FI} and~\ref{tab:cambios-FII}. Hence, given a real nilpotent Lie algebra $\frg$ in Theorem~\ref{intro-main-theorem-2}, the information provided in these tables allows to construct the whole space of complex structures $J$ on $\frg$ up to equivalence.

\vskip.2cm

An important (and still open) problem in the geometry of complex nilmanifolds $(M,J)$ is whether their Dolbeault cohomology groups are or not canonically isomorphic to the Lie-algebra Dolbeault cohomology of the underlying pair $(\frg,J)$. In \cite{CF}, \cite{CFGU-dolbeault}, \cite{FRR}, \cite{R2} and \cite{RTW}, several steps towards a positive answer to this question are given. The results in these papers require that $(M,J)$ satisfies some special properties, which in turn force the pair $(\frg,J)$ to satisfy some algebraic constraints. Here we focus on the result obtained in \cite{FRR}, where it is required that the complex nilmanifold $(M,J)$ is suitably foliated in toroidal groups. In this setting one needs the existence of a non-trivial abelian $J$-invariant ideal $\frf$ in the nilpotent Lie algebra~$\frg$. 
It was first proved in \cite{LU-procBulgaria} that in general such an ideal $\frf$ may not exist. 
In Section~\ref{ideales-abelianos} we study this existence problem on nilpotent Lie algebras up to eight dimensions, obtaining a classification of those NLAs $\frg$ that admit a complex structure $J$ having a non-trivial abelian $J$-invariant ideal.

\section{Complex structures on nilpotent Lie algebras}

\noindent In this section we recall some results about real nilpotent Lie algebras (NLA for short)
endowed with complex structures, paying special attention to real dimension~$8$.

\medskip
Let $\frg$ be a real Lie algebra of dimension $2n$.
Its \emph{ascending central series} $\{\frg_k\}_{k\geq 0}$ is given by
$\frg_0=\{0\}$, and
$$
\frg_k=\{X\in\frg \mid [X,\frg]\subseteq \frg_{k-1}\},
$$
for any $k\geq 1$.
In particular, $\frg_1=Z(\frg)$ is the center of~$\frg$.
A Lie algebra $\frg$ is said to be \emph{nilpotent} if there is
an integer~$s\geq 1$ such that~$\frg_k=\frg$, for every~$k\geq s$.
In such case, the smallest integer $s$
satisfying the previous condition is called \emph{nilpotency step} of~$\frg$, and the Lie algebra is said to be \emph{$s$-step nilpotent}.
Thus, any nilpotent Lie algebra $\frg$ has an associated $s$-tuple
$$(m_{1},\ldots,m_{s-1},m_s) :=\left(\text{dim\,}\frg_{1},\ldots, \text{dim\,}\frg_{s-1},\text{dim\,}\frg_{s}\right)$$
which strictly increases as $0<m_{1}<\cdots <m_{s-1}<m_{s}=2n$.
We will say that $(m_{1},\ldots,m_{s})$ is the \emph{ascending type} of~$\frg$.

Obviously, NLAs with different ascending types are non-isomorphic. However, the converse is only true up to real dimension 4.
Indeed, there are three non-isomorphic 4-dimensional NLAs whose ascending types are $(4)$, $(2,4)$, and $(1,2,4)$.
In contrast, there exist four non-isomorphic 6-dimensional NLAs with the same ascending type $(2,6)$,  for instance.

\smallskip
Let $J$ be a complex structure on an NLA $\frg$, that is,
an endomorphism $J\colon\frg\longrightarrow\frg$
fulfilling $J^2=-\textrm{Id}$ and the integrability condition
$$
N_J(X,Y):=[X,Y]+J[JX,Y]+J[X,JY]-[JX,JY]=0,
$$
for all $ X,Y\in\frg$.
Observe that the terms~$\frg_{k}$ in the ascending central series
may not be invariant under~$J$. For this reason, a new series~$\{\fra_{k}(J)\}_{k}$ adapted to the complex structure~$J$
is introduced in~\cite{CFGU-dolbeault}:
$$
\left\{\begin{array}{l}
\fra_0(J)=\{0\}, \text{ and } \\[4pt]
\fra_k(J)=\{X\in\frg \mid [X,\frg]\subseteq \fra_{k-1}(J)\ {\rm and\ } [JX,\frg]\subseteq \fra_{k-1}(J)\}, \text{ for } k\geq 1.
\end{array}\right.
$$
This series $\{\fra_{k}(J)\}_{k}$
is called the \emph{ascending $J$-compatible series of~$\frg$}.
Observe that every $\fra_k(J)\subseteq\frg_{k}$ is a $J$-invariant ideal of $\frg$, and
$\fra_1(J)$ is indeed the largest subspace of the center $\frg_1$ which is $J$-invariant.

Depending on the
behaviour of the series~$\{\fra_{k}(J)\}_{k}$,
complex structures on NLAs can be classified into different types:


\begin{definition}\label{tipos_J}
\cite{CFGU-dolbeault, LUV-SnN}
{\rm
A complex structure $J$ on a nilpotent Lie algebra $\frg$ is said to be
\begin{itemize}
\item[(i)] \emph{strongly non-nilpotent}, or \emph{SnN} for short, if $\fra_1(J)=\{0\}$;

\smallskip
\item[(ii)] \emph{quasi-nilpotent}, if $\fra_1(J)\neq\{0\}$; moreover, $J$ is called
 \begin{itemize}
 \item[(ii.1)] \emph{nilpotent}, if there exists an integer~$t>0$ such that~$\fra_t(J)=\frg$,
 \item[(ii.2)] \emph{weakly non-nilpotent}, if there is an integer~$t>0$ satisfying~$\fra_t(J)=\fra_l(J)$, for every~$l\geq t$,
           and~$\fra_t(J)\neq\frg$.
 \end{itemize}
\end{itemize}
}
\end{definition}

\begin{remark}\label{otros-autores}
{\rm
Some algebraic constraints to the existence of complex structures on an NLA $\frg$ are studied in \cite{Mi} in terms of the \emph{descending} central series of $\frg$.  These constraints imply some estimates on the nilpotency step $s$; in particular, $\frg$ cannot be filiform \cite{GR}. We also recall that quasi-filiform Lie algebras of dimension $\geq 8$ do not admit any complex structure \cite{GVR}.
Hence, $s\leq 2n-3$ for any NLA~$\frg$ of dimension $2n\geq 8$ endowed with a complex structure.
}
\end{remark}

\begin{remark}
{\rm
In the recent paper~\cite{GZZ}, the structure of Lie algebras endowed with a
\emph{maximal nilpotent}~$J$ is studied.
Such $J$'s are defined as those nilpotent complex structures for which
$t=n$, where $t$ is the smallest integer such that $\fra_t(J) = \frg$ in the ascending $J$-compatible series of~$\frg$.
}
\end{remark}

One can see that quasi-nilpotent complex structures on
NLAs of a given dimension can be constructed from
other complex structures defined on (strictly) lower dimensional NLAs (see \cite[Section~2]{LUV-SnN} for details).
Therefore, the essentially new complex structures that arise in each even real dimension are those of
strongly non-nilpotent type. That is to say, SnN complex structures constitute the fundamental piece to fully understand the class of NLAs endowed with 
complex structures.

In real dimension~$4$ it is well known that SnN complex structures do not exist, whereas
in dimension~$6$ one has the following result:

\begin{theorem}\label{structural-dim6}\cite{COUV}
Let $\frg$ be an NLA of real dimension $6$. If $\frg$ admits an SnN complex structure, then its ascending type is
$(\dim\frg_k)_k=$~$(1,3,6)$ or $(1,3,4,6)$.
\end{theorem}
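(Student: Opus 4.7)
Plan. The strategy is to split the argument into two stages: first bound $\dim\frg_1$, then compute the remaining ranks of the ascending central series.

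\emph{Stage 1 (proving $\dim\frg_1=1$).} The SnN condition $\fra_1(J)=\{0\}$ implies $\frg_1\cap J(\frg_1)=\{0\}$, because this intersection is a $J$-invariant subspace of the center and hence contained in $\fra_1(J)$. Thus $\dim\frg_1\leq 3$. To exclude $\dim\frg_1=3$ one notes that then $\frg=\frg_1\oplus J(\frg_1)$; applying the integrability condition \eqref{Nijenhuis} to any $X,Y\in\frg_1$ kills the three summands involving $X$ or $Y$ alone by centrality and forces $[JX,JY]=0$, so $\frg$ is abelian, contradicting $\dim\frg_1<6$. To exclude $\dim\frg_1=2$, I would pick a $J$-adapted decomposition $\frg=\frg_1\oplus J(\frg_1)\oplus W$ (the $J$-invariant complement $W$ exists because $\frg_1\oplus J(\frg_1)$ is $J$-invariant, so the quotient inherits a complex structure that can be lifted) and work in a basis $\{Z_1,Z_2,JZ_1,JZ_2,Y,JY\}$. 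Only the brackets $[JZ_i,Y]$, $[JZ_i,JY]$, and $[Y,JY]$ can be nonzero, and \eqref{Nijenhuis} yields both $[JZ_1,JZ_2]=0$ and $[JZ_i,JY]=J[JZ_i,Y]$. Combining these with the Jacobi identity and the requirement that $\frg/\frg_1$ have nontrivial center (nilpotency), a case check on where the brackets $[JZ_i,Y]$ land forces some nonzero element of $\frg_1$ to lie together with its $J$-image in a single $J$-stable subspace of $\frg_1$, contradicting the SnN hypothesis. Hence $\dim\frg_1=1$.

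\emph{Stage 2 (computing $\dim\frg_k$ for $k\geq 2$).} Choose a closed $(1,0)$-form $\omega^1\neq 0$ and extend to a basis $\{\omega^1,\omega^2,\omega^3\}$ of $\frg^{1,0}$. Integrability forces each $d\omega^j$ to have type $(2,0)+(1,1)$, and $d\omega^1=0$ may be arranged. If $d\omega^2=0$ then the dual of $\omega^2$ would provide a $J$-invariant $2$-plane in the center, contradicting SnN; hence $d\omega^2\neq 0$, and after a linear change of basis one may take $d\omega^2=\omega^{1\bar 1}$. For $d\omega^3$ the general ansatz is
\[
d\omega^3 \;=\; A\,\omega^{12}+B\,\omega^{1\bar 1}+C\,\omega^{1\bar 2}+D\,\omega^{2\bar 1}+E\,\omega^{2\bar 2},
\]
and $d^2\omega^3=0$ forces $E=0$ since the remaining basic $2$-forms are already closed. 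Reading off which vectors in the dual basis fail to centralize $\frg$ from these Maurer--Cartan equations identifies $\frg_2$ with the span of $\frg_1$ together with the real and imaginary parts of the vector dual to $\omega^2$, giving $\dim\frg_2=3$. The nilpotency step is then either $3$ or $4$, corresponding respectively to whether $\frg_3=\frg$ (type $(1,3,6)$) or $\dim\frg_3=4$ with $\frg_4=\frg$ (type $(1,3,4,6)$). Higher steps are excluded because they would require a filiform or quasi-filiform structure, which by Remark~\ref{otros-autores} admits no complex structure.

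\emph{Main obstacle.} The technical heart of the argument lies in Stage 1, specifically the exclusion of $\dim\frg_1=2$: the Nijenhuis identities alone do not produce a direct contradiction, so one must combine them with the Jacobi identity and nilpotency to build a new $J$-invariant central element, which requires a careful subcase analysis on the location of the brackets $[JZ_i,Y]$ and $[Y,JY]$ within the adapted basis.
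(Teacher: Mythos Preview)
The paper does not supply a proof of this theorem; it is quoted from \cite{COUV} as background. So there is no ``paper's proof'' to compare against, and your attempt must be judged on its own merits.

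Stage~1 is largely sound up to the point you flag. The exclusion of $\dim\frg_1=3$ via Nijenhuis is correct, and you are right that $\dim\frg_1=2$ is the delicate case. Your outline there is plausible but genuinely incomplete, as you acknowledge.

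Stage~2, however, contains a structural error. You write $d\omega^2=\omega^{1\bar 1}$ and then an expression for $d\omega^3$ lying in $\bigwedge^2\langle\omega^1,\omega^2,\omega^{\bar 1},\omega^{\bar 2}\rangle$. That upper-triangular pattern is exactly the defining feature of a \emph{nilpotent} complex structure (Definition~\ref{tipos_J}~(ii.1)); for an SnN structure it cannot be achieved. In the actual six-dimensional SnN examples on $\frh_{19}^-$ and $\frh_{26}^+$ one has $d\omega^2$ involving $\omega^3$ (schematically $d\omega^2=\omega^{13}+\omega^{1\bar 3}$), which is precisely what breaks the ascending $J$-compatible series at $\fra_1(J)=\{0\}$. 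Related to this, your claim that ``if $d\omega^2=0$ then the dual of $\omega^2$ would provide a $J$-invariant $2$-plane in the center'' is also false: closedness of a $1$-form means it annihilates the derived algebra, not that its dual vector is central. Consequently your identification of $\frg_2$ does not follow from the equations you wrote, and the computation of the ascending type collapses.

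To repair Stage~2 you would need to start from the genuine SnN structure equations in dimension~6 (as derived in \cite{COUV} or, in the $8$-dimensional analogue, in Proposition~\ref{prop-paper1} of this paper), where $d\omega^2$ and $d\omega^3$ are intertwined, and compute the brackets and ascending series from those.
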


More generally, all the pairs $(\frg,J)$ with $\text{dim}\,\frg=6$ 
are classified in \cite{COUV} by means of their complex structure equations. There are only two NLAs, up to isomorphism, admitting SnN complex structures.

Concerning higher dimensions, \cite{LUV-SnN} provides
several general restrictions on the terms of the ascending central series of NLAs admitting
SnN complex structures.
Among them, we highlight the following one:

\begin{theorem}\label{prop_centro}\cite[Theorem 3.11]{LUV-SnN}
Let $(\frg, J)$ be a $2n$-dimensional nilpotent Lie algebra, with $n\geq 4$, endowed with a strongly
non-nilpotent complex structure $J$. Then, $1\leq \dim\frg_1 \leq n-3$.
\end{theorem}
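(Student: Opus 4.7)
The lower bound $\dim\frg_1\geq 1$ is immediate: any nonzero nilpotent Lie algebra has nontrivial center. For the upper bound, set $m=\dim\frg_1$. The starting observation is that $\fra_1(J)$ coincides with the largest $J$-invariant subspace of $\frg_1$, so the SnN hypothesis $\fra_1(J)=\{0\}$ forces $\frg_1\cap J\frg_1=\{0\}$. In particular $\dim_\R(\frg_1+J\frg_1)=2m$, giving the crude bound $m\leq n$.

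To sharpen it, I would feed the Nijenhuis integrability identity \eqref{Nijenhuis} with pairs $(X,Y)$ in which $X\in\frg_1$. Centrality of $X$ kills the terms $[X,Y]$ and $[X,JY]$, and the identity collapses to $[JX,JY]=J[JX,Y]$. Hence $\operatorname{ad}_{JX}$ is $\C$-linear for every $X\in\frg_1$. Three consequences follow: (a) $\frg_1+J\frg_1$ is an abelian subspace of $\frg$ (take $Y\in\frg_1$ in the identity, so that centrality of $Y$ kills $[JX,Y]$ too); (b) the image $[JX,\frg]$ is a $J$-invariant subspace of $\frg$ for every $X\in\frg_1$; and (c) $J\frg_1\cap\frg_2=\{0\}$, because $JX\in\frg_2$ would make $[JX,\frg]$ a $J$-invariant subspace of $\frg_1$, hence contained in $\fra_1(J)=\{0\}$, which would force $JX\in\frg_1\cap J\frg_1=\{0\}$.

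The heart of the argument is then to use (a)--(c) to produce at least three complex dimensions of $\frg$ transverse to $\frg_1+J\frg_1$. I would choose a $J$-invariant complement $W$ to $\frg_1+J\frg_1$, so that $\dim_\C W=n-m$, and pick a $(1,0)$-coframe adapted to this splitting. Since for each nonzero $X\in\frg_1$ the map $\operatorname{ad}_{JX}|_W\colon W\to\frg$ is $\C$-linear, nonzero by (c), and has $J$-invariant image, one harvests a rich bracket structure between $J\frg_1$ and $W$. A case analysis on the resulting complex structure equations, combined with the Jacobi identity and the finite nilpotency step of $\frg$, should then exclude the configurations $n-m\in\{1,2\}$ and yield $n-m\geq 3$, i.e.\ $m\leq n-3$.

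The main obstacle is precisely this final exclusion step: showing that no complex structure equations can simultaneously be integrable, satisfy $\fra_1(J)=\{0\}$, and correspond to a genuine nilpotent Lie algebra when $W$ has complex dimension one or two. This is analogous in spirit to the analysis that forces ascending type $(1,3,\ldots)$ in dimension six (Theorem~\ref{structural-dim6}), where the center is already constrained to be one-dimensional rather than larger, and suggests that the same mechanism, carried out one level up in dimension, is what produces the uniform gap of three between $m$ and $n$.
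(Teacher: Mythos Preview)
The paper does not actually prove this theorem: it is quoted verbatim from \cite[Theorem~3.11]{LUV-SnN}, with no argument given here. So there is no ``paper's own proof'' to compare against, and your proposal has to stand on its own.

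Your preliminary observations are sound. The identification $\fra_1(J)=\frg_1\cap J\frg_1$ is correct, and feeding $X\in\frg_1$ into the Nijenhuis identity does yield $[JX,JY]=J[JX,Y]$, from which (a), (b), (c) follow exactly as you say. These facts are genuinely useful and are among the ingredients used in \cite{LUV-SnN}.

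However, what you have written is not a proof: you explicitly flag the ``final exclusion step'' --- ruling out $n-m\in\{1,2\}$ --- as the ``main obstacle,'' and then do not carry it out. Saying that a case analysis ``should then exclude'' these configurations is precisely where the content of the theorem lies. The observations (a)--(c) by themselves only give $m\leq n$ (from $\frg_1\cap J\frg_1=\{0\}$) and perhaps $m\leq n-1$ with a bit more work; squeezing out two further complex dimensions transverse to $\frg_1+J\frg_1$ is substantially harder and, in the original reference, requires a careful inductive construction of a ``doubly adapted'' basis together with a study of how the brackets interact with $\frg_2$ and $\frg_3$. Your analogy with the six-dimensional result is suggestive but not an argument: that case has a one-dimensional center to begin with, so it does not directly illuminate why a larger center is forbidden in general. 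As written, the proposal is an outline with the decisive step missing.
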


From Definition~\ref{tipos_J} one can clearly deduce that any complex structure on an
NLA with $1$-dimensional center is of SnN type.
Thanks to Theorem~\ref{prop_centro}, the converse also holds in eight dimensions:

\begin{corollary}\label{cor-1-dim-center}
Let $\frg$ be an $8$-dimensional NLA admitting a
complex structure $J$. The following properties are equivalent:
\begin{itemize}
\item the center of $\frg$ has dimension $1$;
\item the complex structure $J$ is strongly non-nilpotent.
\end{itemize}
\end{corollary}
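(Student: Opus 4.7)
The plan is to prove the two implications separately, with both directions being short consequences of results already assembled in the excerpt. The key observation on which everything rests is that $\fra_1(J)$ is, by construction, the largest $J$-invariant subspace of the center $\frg_1$, and in particular $\fra_1(J)$ is a complex subspace and hence has \emph{even} real dimension.

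For the implication $\dim\frg_1 = 1 \Rightarrow J$ is SnN, I would simply observe that $\fra_1(J) \subseteq \frg_1$ forces $\dim\fra_1(J) \leq 1$, while the evenness of $\dim\fra_1(J)$ then forces $\dim\fra_1(J) = 0$. By Definition~\ref{tipos_J}~(i), this is exactly the SnN condition. This is the direction the authors call ``clearly deduced'' from the definition.

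For the converse $J$ SnN $\Rightarrow \dim\frg_1 = 1$, I would invoke Theorem~\ref{prop_centro} with $2n=8$, i.e.\ $n=4$, which yields the two-sided bound $1 \leq \dim\frg_1 \leq n-3 = 1$, forcing $\dim\frg_1 = 1$. The lower bound $\dim\frg_1 \geq 1$ is guaranteed since $\frg$ is a nontrivial nilpotent Lie algebra, whose center is necessarily nonzero.

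There is no real obstacle here: the corollary is essentially the combination of a parity argument on $J$-invariant subspaces and the pigeonhole provided by Theorem~\ref{prop_centro} in the smallest dimension where SnN structures with nontrivial center could even conceivably appear. The only point worth being careful about is invoking the evenness of $\dim \fra_1(J)$, which follows because $J$ restricts to a complex structure on $\fra_1(J)$ by the very definition of that ideal.
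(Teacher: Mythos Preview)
Your proof is correct and matches the paper's approach exactly: the paper notes that the forward implication is ``clearly deduced'' from Definition~\ref{tipos_J} (your parity argument on $\fra_1(J)$), and that the converse follows ``thanks to Theorem~\ref{prop_centro}'' (your pigeonhole with $n=4$). You have simply made explicit the two steps the authors leave as remarks preceding the corollary.
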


A structural result in the spirit of Theorem~\ref{structural-dim6} is available in eight dimensions:

\begin{theorem}\label{teorema-estructura-acs}\cite[Theorem 4.1]{LUV-SnN}
Let $\frg$ be an NLA of real dimension $8$. If $\frg$ admits an SnN complex structure, then its ascending type is
$(\dim\frg_k)_k=$~$(1,3,8)$, $(1,3,5,8)$, $(1,3,6,8)$, $(1,3,5,6,8)$, $(1,4,8)$, $(1,4,6,8)$, $(1,5,8)$, or $(1,5,6,8)$.
\end{theorem}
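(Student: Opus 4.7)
The plan is to systematically narrow down the arithmetic possibilities for the tuple $(m_1,\ldots,m_s)=(\dim\frg_1,\ldots,\dim\frg_s)$ using the general restrictions on SnN complex structures from \cite[Section~3]{LUV-SnN} together with elementary linear-algebra observations. First, Corollary~\ref{cor-1-dim-center} (equivalently, Theorem~\ref{prop_centro} in dimension $2n=8$) forces $m_1=1$, and Remark~\ref{otros-autores} gives $s\leq 2n-3=5$. The case $s=2$ can be ruled out by an elementary parity argument: with $m_1=1$, it would make $[\frg,\frg]\subseteq\frg_1$ one-dimensional, and the induced skew-symmetric form on the quotient $\frg/\frg_1$ would have radical equal to $Z(\frg)/\frg_1=0$, hence be non-degenerate, which is impossible on an odd-dimensional space. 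Hence $s\in\{3,4,5\}$, and the sequence starts with $1$ and ends with $8$.

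Second, I would establish the two-sided bound $3\leq m_2\leq 5$. For the lower bound $m_2\geq 3$, one picks a $(1,0)$-coframe $\{\omega^k\}$ adapted to the filtration induced by the SnN condition (so that $\omega^1$ is closed and its complex conjugates span the dual of the one-dimensional $\frg_1$), and shows that $m_2\leq 2$ would restrict the possible $d\omega^k$ so drastically that either integrability $N_J=0$ fails or a $J$-invariant line is produced inside $\frg_1$, contradicting $\fra_1(J)=\{0\}$. For the upper bound $m_2\leq 5$, one uses that $\fra_2(J)$ must be a $J$-invariant, hence even-dimensional, ideal contained in $\frg_2$; a dimension count on the bracket map $\frg\times\frg\to\frg_1$ together with the SnN hypothesis shows that $m_2\geq 6$ leaves enough room to extract a $J$-stable subspace of the center, again contradicting $\fra_1(J)=\{0\}$.

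Third, for each pair $(m_1,m_2)\in\{(1,3),(1,4),(1,5)\}$ I would run through the admissible values of $m_3$ (when $s\geq 4$) and $m_4$ (when $s=5$). Each candidate must satisfy: the jumps $m_{k+1}-m_k$ are controlled by the Jacobi identity via the bracket-induced map $\frg_{k+1}/\frg_k\to\mathrm{Hom}(\frg/\frg_1,\frg_k/\frg_{k-1})$; each $\fra_k(J)$ is even-dimensional and $J$-invariant; and integrability restricts the support of the $(1,0)$-form differentials as in Proposition~\ref{prop-paper1}. A finite case check eliminates every pattern not on the list, leaving exactly the eight types $(1,3,8)$, $(1,3,5,8)$, $(1,3,6,8)$, $(1,3,5,6,8)$, $(1,4,8)$, $(1,4,6,8)$, $(1,5,8)$ and $(1,5,6,8)$.

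The main obstacle is the second paragraph: obtaining the bound $m_2\leq 5$ and the companion exclusions $(1,3,4,8)$, $(1,3,4,5,8)$, $(1,3,4,6,8)$, $(1,3,7,8)$, $(1,4,5,8)$, $(1,4,7,8)$, $(1,5,7,8)$, etc. These cannot be ruled out by purely numerical arguments; they require a delicate manipulation of the complex structure equations combined with careful tracking of the $J$-invariance of the ideals $\fra_k(J)$, in the spirit of \cite[Section~3]{LUV-SnN}. Once these technical cases are disposed of, the final enumeration is routine bookkeeping.
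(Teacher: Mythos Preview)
This theorem is not proved in the present paper; it is quoted verbatim from \cite[Theorem 4.1]{LUV-SnN}, so there is no ``paper's own proof'' here to compare against. Your outline is in the right spirit: the original proof in \cite{LUV-SnN} indeed proceeds by constructing adapted $(1,0)$-bases (this is what eventually yields Proposition~\ref{prop-paper1}) and then running a case analysis on the possible jumps in the ascending central series, ruling out bad patterns via the interplay between Jacobi, integrability, and the SnN condition $\fra_1(J)=\{0\}$. Your elimination of $s=2$ by the parity argument on the induced skew form is correct and clean.

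There is, however, a genuine confusion in your second paragraph. You write that if $m_2\leq 2$ one would produce ``a $J$-invariant line inside $\frg_1$, contradicting $\fra_1(J)=\{0\}$''. But $\frg_1$ is one-dimensional here, and any $J$-invariant subspace is even-dimensional, so $\fra_1(J)=\{0\}$ is automatic regardless of $m_2$; no contradiction can arise this way. The actual obstruction to $m_2=2$ (and likewise the upper bound $m_2\leq 5$) has to come from a different mechanism: one must show that the structure equations force either additional central elements (so $m_1>1$) or the degeneration of some coefficient that makes the complex structure fail to be SnN on a larger quotient. You correctly flag in your last paragraph that these are the hard steps and that they require the detailed manipulations of \cite[Section~3]{LUV-SnN}; but as written, your sketch of \emph{how} the contradiction would be reached is not merely incomplete, it points in the wrong direction. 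The same applies to your argument for $m_2\leq 5$: the claim that ``$m_2\geq 6$ leaves enough room to extract a $J$-stable subspace of the center'' again cannot work inside a one-dimensional center. What actually happens in \cite{LUV-SnN} is a careful construction of a ``doubly adapted'' basis tracking both $\{\frg_k\}$ and $\{\fra_k(J)\}$ simultaneously, and the exclusions come from incompatibilities in the resulting structure constants rather than from dimension counts alone.
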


Furthermore, the complex structure equations for any pair $(\frg,J)$ with~$\dim \frg=8$ and~$\fra_1(J)=\{0\}$ are given in \cite{LUV-SnN}. 
Before presenting them, we need to recall some basic concepts.

Let $\gc^*$ denote the dual of the complexification $\gc$ of $\frg$.
Then, there is a natural bigraduation induced on $\bigwedge^* \,\gc^* =\oplus_{p,q}
\bigwedge^{p,q}_{J}(\frg^*)$, where the spaces $\bigwedge^{1,0}_{J}(\frg^*)$
and $\bigwedge^{0,1}_{J}(\frg^*)$ are, respectively, the eigenspaces of the
eigenvalues $\pm i$ of $J$ as an endomorphism of~$\gc^*$. For simplicity,
we will denote $\bigwedge^{p,q}_{J}(\frg^*)$ by $\frg^{p,q}_J$.
Let $d\colon \bigwedge^* \gc^* \longrightarrow \bigwedge^{*+1}
\gc^*$ be the extension to the complexified exterior algebra of the
usual Chevalley-Eilenberg differential. 
Since $J$ is a complex structure, 
we have that $\pi_{0,2} \circ d\vert_{\frg^{1,0}_J}
\equiv 0$, where $\pi_{0,2}\colon \bigwedge^{2} \gc^*
\longrightarrow \frg^{0,2}_J$ denotes the canonical projection.
In fact, this is equivalent to the integrability condition $N_J\equiv 0$.
Under these assumptions the differential $d$ splits as 
$d=\partial+\db$, where $\db\colon \frg^{p,q}_J \longrightarrow \frg^{p,q+1}_J$ is defined by $\db=\pi_{p,q+1} \circ d\vert_{\frg^{p,q}_J}$, and $\partial$ is the conjugate of $\db$.
From $d^2=0$ we have $\db^2=0$, and the associated \emph{Lie-algebra Dolbeault cohomology} is given by  
\begin{equation}\label{Dolbeault}
H^{p,q}_{\db}(\frg,J)=
   \mathrm{Ker}\{\db\colon \frg^{p,q}_J \longrightarrow \frg^{p,q+1}_J\} /\, 
   \mathrm{Im}\{\db\colon \frg^{p,q-1}_J \longrightarrow \frg^{p,q}_J\}.
\end{equation}

Let $\{\omega^k\}_{k=1}^n$ be any basis of~$\frg^{1,0}_J$. 
From the integrability condition of $J$ we have 
\begin{equation}\label{complex-ecuss}
d \omega^k = \sum_{1\leq r<s\leq n} A^k_{r s}\,\omega^{r s} + \sum_{1\leq r,s\leq n} B^k_{r\bar{s}}\,\omega^{r\bar{s}},\quad 1\leq k\leq n,
\end{equation}
for certain $A^k_{r\,s},B^k_{r\,\bar{s}}\in \C$. Here, and in the rest of the paper, we denote by $\omega^{jk}$, resp. $\omega^{j\overline{k}}$, the wedge product
$\omega^j\wedge\omega^k$, resp.
$\omega^j\wedge\omega^{\overline{k}}$, where
$\omega^{\overline{k}}$ indicates the complex conjugate of
$\omega^k$.
Since $\frg$ is an NLA, by~\cite{S} one can take the basis
$\{\omega^k\}_{k=1}^n$ so that  
\begin{equation}\label{Salamon-ecuss}
d\omega^1=0  \quad\hbox{ and }\quad  d \omega^{k} \in \mathcal{I}(\omega^1,\ldots,\omega^{k-1}), \quad
\mbox{ for } 2\leq k\leq n,
\end{equation}
where $\mathcal{I}(\omega^1,\ldots,\omega^{k-1})$ is the ideal in
$\bigwedge\phantom{\!}^* \,\gc^*$ generated by
$\{\omega^1,\ldots,\omega^{k-1}\}$.


Note that one can construct $J$ by defining an appropriate space $\frg^{1,0}_J$. Even more, one can construct a pair $(\frg,J)$ by defining appropriate equations. 
More precisely, consider equations of the form \eqref{complex-ecuss} satisfying the condition \eqref{Salamon-ecuss}, and declare 
$\{\omega^k\}_{k=1}^n$ to be a basis of bidegree $(1,0)$. Then, they define a Lie algebra $\frg$ with a complex structure $J$ as long as $d^2=0$, namely, the corresponding Lie bracket satisfies the Jacobi identity. Notice that this imposes several conditions on the coefficients $A^k_{rs},B^k_{r\bar{s}}\in \C$ in \eqref{complex-ecuss}. Since we are interested in defining SnN complex structures on $8$-dimensional NLAs, we need to fix $n=4$ and pay attention to the dimension of the center of $\frg$. This motivates the following definition:

\begin{definition}\label{admissible}
{\rm 
The coefficients $A^k_{rs},B^k_{r\bar{s}}\in \C$ are said to be}
admissible 
{\rm if the equations \eqref{complex-ecuss} satisfy $d^2 = 0$ and the associated Lie algebra has $1$-dimensional center.} 
\end{definition}

\begin{proposition}\label{prop-paper1}
Let $J$ be an SnN complex structure on an 8-dimensional NLA $\frg$. 
Then, there exists a basis of $(1,0)$-forms $\{\omega^k\}_{k=1}^4$ in terms of which the complex structure equations of $(\frg,J)$ are of the form
\begin{equation}\label{ecus-generales}
\left\{
\begin{split}
d\omega^1 &= 0,\\[-4pt]
d\omega^2 &= A\,\omega^{1\bar 1} -B(\omega^{14}- \omega^{1\bar 4}),\\[-4pt]
d\omega^3 &=
  F\,\omega^{1\bar 1}
+K\,\omega^{2\bar 2}
+ C\,\omega^{12}
+D\,\omega^{1\bar 2}
 +G\,\omega^{2\bar1}
 -E\,(\omega^{14}- \omega^{1\bar 4})
 -H\,(\omega^{24}-\omega^{2\bar 4}),\\[-4pt]
d\omega^4 &= L\,\omega^{1\bar 1}+i\,s\,\omega^{2\bar 2}+i\,t\,\omega^{3\bar3}
+(M\,\omega^{1\bar 2}-\bar{M}\,\omega^{2\bar 1})
  +(N\,\omega^{1\bar 3}-\bar{N}\,\omega^{3\bar 1})+(P\,\omega^{2\bar 3}-\bar{P}\,\omega^{3\bar2}),\end{split}
 \right.
\end{equation}
where the coefficients 
$A, \ldots, P\in \C$ 
and $s,t \in\mathbb R$ are admissible. 
\end{proposition}

\begin{proof}
In \cite{LUV-SnN} admissible complex equations are obtained 
depending on the dimension of the second term~$\frg_2$ in the ascending central series of any 8-dimensional NLA $\frg$ endowed with an SnN complex structure $J$ (see Propositions 4.12, 4.13 and 4.14 in \cite{LUV-SnN} for the three possible cases $\mathrm{dim}\,\frg_2=3,4$ or~$5$, respectively). 
We here simply note that we can gather those equations 
in the more general setting provided by~\eqref{ecus-generales}. 
\end{proof}

\section{Classification of SnN complex structures in dimension $8$}\label{complex-classif}

\noindent In this section, we classify the SnN complex structures on $8$-dimensional NLAs up to equivalence. 
Let $\frg$ and $\frg'$ be two Lie algebras endowed with respective complex structures $J$ and $J'$.
They are said to be {\it equivalent} if there is an isomorphism of Lie algebras
$f\colon \frg\longrightarrow\frg'$ such that $J=f^{-1}\circ J'\circ f$.
That is, if there exists a
$\mathbb{C}$-linear isomorphism $F(:=\!\!f^*)\colon \frg'^{1,0}_{J'}
\longrightarrow \frg^{1,0}_{J}$ such that $d_{\frg} \circ F=F \circ d_{\frg'}$,
where $d_{\frg}$ and $d_{\frg'}$ are the (extended) Chevalley-Eilenberg differentials of $\frg$ and $\frg'$, respectively. 
We will usually denote both differentials by the same letter $d$.

Note that the equivalence above induces an isomorphism 
$F\colon H^{p,q}_{\db}(\frg',J')  \longrightarrow H^{p,q}_{\db}(\frg,J)$
for every $p,q$. In the following result we study the invariant given by the Lie-algebra Dolbeault cohomology group of bidegree~$(p,q)=(0,1)$.

\begin{proposition}\label{dolb-0-1}
For any SnN complex structure $J$ on an 8-dimensional NLA $\frg$, 
the dimension of the Lie-algebra Dolbeault cohomology group $H^{0,1}_{\db}(\frg,J)$ is either $2$ or $3$. 
\end{proposition}

\begin{proof}
From \eqref{Dolbeault} we have  
$H^{0,1}_{\db}(\frg,J)=\mathrm{Ker}\{\db\colon \frg^{0,1}_J \longrightarrow \frg^{0,2}_J\}$. By Proposition~\ref{prop-paper1} we can take a basis $\{\omega^k\}_{k=1}^4$ of $(1,0)$-forms satisfying \eqref{ecus-generales} for some tuple $A, \ldots, P\in \C$ 
and $s,t \in\mathbb R$ of admissible coefficients. 
Clearly, $\db\omega^{\bar 1}=0=\db\omega^{\bar 4}$, 
so $\dim H^{0,1}_{\db}(\frg,J) \geq 2$. 

If $\dim H^{0,1}_{\db}(\frg,J)= 4$, then $\db\omega^{\bar 2}=0=\db\omega^{\bar 3}$ and this implies $B=C=E=H=0$ in the equations~\eqref{ecus-generales}. 
However, in this case $U=\Real(Z_4)$ and 
$JU=-\Imag(Z_4)$ 
would belong to the center of $\frg$, being $Z_4$ the dual of $\omega^4$. This
is a contradiction to the fact that the tuple of coefficient is admissible, so one concludes that $2\leq \dim H^{0,1}_{\db}(\frg,J) \leq 3$. 
\end{proof}

This result provides a partition of the space of SnN complex structures $J$ into two 
families:

\begin{definition}\label{familias-I-II}
{\rm
We say that $J$ belongs to \emph{Family I} (resp. \emph{Family II}) if the invariant $H^{0,1}_{\db}(\frg,J)$ has maximal dimension, i.e. equal to $3$ (resp. minimal dimension, i.e. equal to $2$).
}
\end{definition}

The main goal of this section is to prove the classification result below.
Recall that by Corollary~\ref{cor-1-dim-center},
a complex structure on an 8-dimensional NLA is SnN if and only if the center of the NLA is 1-dimensional.

\begin{theorem}\label{main-theorem}
{\textbf{(Classification of complex structures)}}
Let $J$ be a complex structure on an 8-dimensional NLA $\frg$ with $1$-dimensional center.
Then, there exists a basis of $(1,0)$-forms $\{\omega^k\}_{k=1}^4$ in terms of which the complex structure equations of $(\frg,J)$ 
are one (and only one) of the following:
\begin{itemize}
\item[(i)] if $J$ belongs to {\textbf{Family I}}, then
\begin{equation}\label{FI-SnN}
\left\{
\begin{split}
d\omega^1 &= 0,\\[-4pt]
d\omega^2 &= \varepsilon\,\omega^{1\bar 1},\\[-4pt]
d\omega^3 &= \omega^{14}+\omega^{1\bar 4}+a\,\omega^{2\bar 1}+ i\,\delta\,\varepsilon\,b\,\omega^{1\bar 2},\\[-4pt]
d\omega^4 &= i\,\nu\,\omega^{1\bar 1} +b\,\omega^{2\bar 2}+ i\,\delta\,(\omega^{1\bar 3}-\omega^{3\bar 1}),
\end{split}
\right.
\end{equation}
where $\delta=\pm 1$, $(a,b)\in \mathbb R^2-\{(0,0)\}$ with $a\geq 0$,
and the tuple 
$(\varepsilon, \nu, a, b)$ takes the following values:

\vskip.1cm

$(0,0,0,1),\, (0,0, 1, 0),\, (0,0, 1, 1),\, (0,1, 0, \nicefrac{b}{|b|}),\, (0,1, 1,b),\,
(1,0, 0,1),\,  (1,0, 1,|b|) \text{ or }(1,1, a, b)$.

\vskip.1cm

\noindent Moreover, the ascending type of $\frg$ is $(\mathrm{dim}\,\frg_k)_{k}=(1,3,8)$, $(1,3,6,8)$, $(1,4,8)$,
$(1,4,6,8)$, $(1,5,8)$, or $(1,5,6,8)$,
and the relation between the 
parameters in \eqref{FI-SnN} and the ascending type of $\frg$ is given in Table~\ref{tab:valores-complejos}.
\begin{table}[h]
\centering
\renewcommand{\arraystretch}{1.2}
\renewcommand{\tabcolsep}{0.3cm}
\begin{tabular}{|c||c|c|c|c|c|}
\hline
$(\mathrm{dim}\,\frg_k)_{k}$ & $\varepsilon$ & $\nu$ & $a$ & $b$ &$\delta$\\
\hline\hline
\multirow{2}{*}{$(1,3,8)$} & \multirow{2}{*}{$0$} & \multirow{2}{*}{$0$} & \multirow{2}{*}{$1$} & $0$&\multirow{2}{*}{$\pm 1$} \\
\cline{5-5}
 & & & & $1$& \\
\hline\hline
\multirow{3}{*}{$(1,3,6,8)$} & $1$ &$0$& \multirow{2}{*}{$1$} & $b\geq 0$ &\multirow{3}{*}{$\pm 1$} \\
\cline{2-3}\cline{5-5}
 &$0$ & $1$ & & \multirow{2}{*}{$b\in\mathbb R$}& \\
\cline{2-4}
 & $1$ & $1$ & $a>0$ && \\
\hline\hline
$(1,4,8)$ & $1$ & $1$ &$0$ & $2\delta$ &$\pm 1$\\
\hline\hline
\multirow{2}{*}{$(1,4,6,8)$} & \multirow{2}{*}{$1$} & $0$ & \multirow{2}{*}{$0$} & $1$ &\multirow{2}{*}{$\pm 1$}\\
\cline{3-3}\cline{5-5}
 & & $1$ & & $b\in\mathbb R -\{0,2\delta\}$ &\\
\hline\hline
$(1,5,8)$ & $0$ & $0$ & $0$ &$1$ &$\pm 1$ \\
\hline\hline
\multirow{2}{*}{$(1,5,6,8)$} & \multirow{2}{*}{$0$} & \multirow{2}{*}{$1$} & \multirow{2}{*}{$0$} & $-1\ \ $  &\multirow{2}{*}{$\pm 1$}\\
\cline{5-5}
 & & & & $1$& \\
\hline
\end{tabular}
\medskip
\caption{Complex structures in Family I up to equivalence}
\label{tab:valores-complejos}
\end{table}


\item[(ii)] if $J$ belongs to {\textbf{Family II}}, then
\begin{equation}\label{FII-SnN}
\left\{
\begin{split}
d\omega^1&=0,\\[-4pt]
d\omega^2&=\omega^{14}+\omega^{1\bar 4},\\[-4pt]
d\omega^3&=a\,\omega^{1\bar 1}
                 +\varepsilon\,(\omega^{12}+\omega^{1\bar 2}-\omega^{2\bar 1})
                 +i\,\mu\,(\omega^{24}+\omega^{2\bar 4}),\\[-4pt]
d\omega^4&=i\,\nu\,\omega^{1\bar 1}-\mu\,\omega^{2\bar 2}+i\,b\,(\omega^{1\bar 2}-\omega^{2\bar 1})+i\,(\omega^{1\bar 3}-\omega^{3\bar 1}),
\end{split}
\right.
\end{equation}
where $a, b\in\mathbb R$, and the tuple $(\varepsilon, \mu, \nu, a, b)$ takes the following values:

\vskip.1cm

$(1, 1, 0, a, b),\, (1, 0, 1, a, b),\, (1, 0, 0, 0, b), (1, 0, 0, 1, b),\, (0, 1, 0, 0, 0) \text{ or } (0, 1, 0, 1, 0)$.

\vskip.1cm

\noindent Moreover, the ascending type of $\frg$ is $(\mathrm{dim}\,\frg_k)_{k}=(1,3,5,8)$ or $(1,3,5,6,8)$,
and the relation between the 
parameters in \eqref{FII-SnN} and the ascending type of $\frg$ is given in  Table~\ref{tab:valores-complejos2}.

\begin{table}[h]
\centering
\renewcommand{\arraystretch}{1.3}
\renewcommand{\tabcolsep}{0.3cm}
\begin{tabular}{|c||c|c|c|c|c|}
\hline
$(\mathrm{dim}\,\frg_k)_{k}$ & $\varepsilon$ & $\mu$ & $\nu$ &$a$ & $b$ \\
 \hline\hline
\multirow{3}{*}{$(1,3,5,8)$} & $0$ &$1$ & $0$ &\multirow{2}{*}{$0,\, 1$} & $0$ \\
\cline{2-4}\cline{6-6}
 &$1$ & $0$ & $0$&& \multirow{2}{*}{$b\in\mathbb R$} \\
\cline{2-5}
 & $1$ & $1$ & $0$ & $a\in\mathbb R$ &\\
\hline\hline
$(1,3,5,6,8)$ & $1$ & $0$ &$1$&$a\in\mathbb R$ & $b\in\mathbb R$ \\
\hline
\end{tabular}
\medskip
\caption{Complex structures in Family II up to equivalence}
\label{tab:valores-complejos2}
\end{table}
\end{itemize}
\end{theorem}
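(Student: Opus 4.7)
The plan is to proceed case by case on the ascending type $(\dim\frg_k)_k$, starting from the parametric equations \eqref{ecus1}, \eqref{ecus2}, and \eqref{ecus3} of Proposition~\ref{prop-paper1}, which already incorporate the condition $\fra_1(J)=\{0\}$ and split into eight sub-cases, one for each ascending type allowed by Theorem~\ref{teorema-estructura-acs}. For each sub-case the first move is to impose the Jacobi identity $d^2=0$; expanding $d^2\omega^3$ and $d^2\omega^4$ in the bi-graded basis $\{\omega^{jk},\,\omega^{j\bar k},\,\omega^{\bar j\bar k}\}$ yields a system of polynomial relations on the coefficients $A,B,\ldots,P,s,t$ that eliminate some of them and couple the others, cutting the raw parameter space down to a finite-dimensional algebraic set.

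Next I would describe explicitly the admissible equivalences. Since the subspaces $\langle \omega^1 \rangle$, $\langle \omega^1,\omega^2 \rangle$ and $\langle \omega^1,\omega^2,\omega^3 \rangle$ are intrinsically determined by the ascending central series and the bigrading, any $\mathbb{C}$-linear isomorphism $F^*\colon \frg^{1,0}_J\to \frg'^{1,0}_{J'}$ commuting with $d$ must be block-lower-triangular with respect to this flag. Writing $\widetilde{\omega}^k=\sum_{j\leq k}\lambda^k_j\,\omega^j$ (with possible antiholomorphic corrections on $\omega^4$) gives explicit transformation rules for every coefficient in \eqref{ecus1}--\eqref{ecus3}. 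The strategy is to use the diagonal entries $\lambda^k_k$ for rescalings and phase adjustments (which is what produces the normalizations $a\in\mathbb{R}^{\geq 0}$ and the residual sign $\delta=\pm 1$ in Family~I), and the off-diagonal entries to kill redundant coefficients while respecting the ascending-type constraints stated in Proposition~\ref{prop-paper1}.

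The key structural dichotomy producing Family~I versus Family~II concerns the cases $\dim\frg_2=3$. For ascending types $(1,3,8)$ and $(1,3,6,8)$, as well as for all cases with $\dim\frg_2\in\{4,5\}$, the coefficients $C,D,G$ (and analogues) can be simultaneously normalized to zero, and the equations collapse into the shape~\eqref{FI-SnN}, with $\omega^3$ carrying the coupling $\omega^{14}+\omega^{1\bar 4}$. By contrast, when $(\dim\frg_k)_k=(1,3,5,8)$ or $(1,3,5,6,8)$ the plane $\langle\omega^1,\omega^2\rangle$ is forced to absorb a piece of $d^{-1}(\frg^{1,0}_J\wedge\frg^{0,1}_J)$ that cannot be removed, so the $\omega^{14}+\omega^{1\bar 4}$ term must be pushed into $d\omega^2$ and the combination $\omega^{12}+\omega^{1\bar 2}-\omega^{2\bar 1}$ becomes unavoidable in $d\omega^3$; after swapping the roles of $\omega^2$ and~$\omega^3$ and renormalizing, the equations match~\eqref{FII-SnN} exactly.

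The hardest part will not be the normalization itself, which is a finite though lengthy sequence of elementary moves, but the bookkeeping needed to prove that the lists in Tables~\ref{tab:valores-complejos} and~\ref{tab:valores-complejos2} are simultaneously exhaustive and non-redundant. Exhaustiveness requires tracing every branching produced by the Jacobi identity (e.g.\ $H=0$ versus $H\neq 0$, $\Real L=0$ versus $\Real L\neq 0$, $\mu\nu=0$, $(\varepsilon,\mu)\neq(0,0)$) and matching it to the correct row of the correct table. Non-redundancy requires showing that two distinct admissible tuples never give equivalent pairs $(\frg,J)$; this is best handled by extracting invariants of $(\frg,J)$ — the ascending type, the dimensions of $d(\frg^{1,0}_J)\cap\frg^{1,1}_J$ and $d(\frg^{1,0}_J)\cap\frg^{2,0}_J$, and certain quadratic invariants built from the coefficients of $\omega^{2\bar 2}$ and $\omega^{1\bar 1}$ — and verifying that these invariants separate the listed tuples. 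I expect this invariant-theoretic separation, especially in the continuous families $(a,b)\in\mathbb{R}^{\geq 0}\times\mathbb{R}$ of Family~I, to be the most delicate step.
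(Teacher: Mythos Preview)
Your plan has the right overall shape (impose Jacobi, identify the automorphism group of the flag, normalize), but the structural dichotomy you describe for splitting into Family~I versus Family~II is not the one the paper uses, and the version you state is not correct as written. You claim that for the ascending types $(1,3,8)$, $(1,3,6,8)$, $(1,4,*)$, $(1,5,*)$ the coefficients $C,D,G$ can be simultaneously normalized to zero; but in the target equations~\eqref{FI-SnN} the terms $a\,\omega^{2\bar 1}$ and $i\delta\varepsilon b\,\omega^{1\bar 2}$ are precisely nonzero $G$- and $D$-type terms, so Family~I is not characterized by their vanishing. The paper's actual split is on the single coefficient $B$ of $\omega^{14}-\omega^{1\bar 4}$ in $d\omega^2$: after first unifying \eqref{ecus1}--\eqref{ecus3} into one set of equations and proving (via two short lemmas) that $t$, $P$, and $K$ can always be set to zero, one defines Family~I by $B=0$ and Family~II by $B\neq 0$. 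Only \emph{afterwards} is the ascending central series computed for each normalized family, and it is this computation (Propositions~\ref{serie-familiaI} and~\ref{serie-familiaII}) that shows the two families correspond to disjoint sets of ascending types. Your proposed case-by-case analysis on the eight ascending types would force you to rediscover this $B$-dichotomy separately in each case, with substantial duplication.

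A second point: the non-redundancy of the tables is not handled in the paper by extracting cohomological or dimension-count invariants of $(\frg,J)$ as you suggest, but by directly parametrizing the residual automorphism group (which turns out to be very small, essentially diagonal after the triangularity lemmas~\ref{lema-previo-equiv-FI} and~\ref{lema-previo-equiv-FII}) and reading off how $(a,b)$ transforms under it. This is both more elementary and more conclusive than an invariant-theoretic separation, particularly for the continuous families where your proposed invariants would have to vary continuously and you would still need to check injectivity. Your remark about ``possible antiholomorphic corrections on $\omega^4$'' is also off: equivalences are $\mathbb{C}$-linear on $\frg^{1,0}_J$, so no $\bar\omega^k$ terms appear in $F(\omega'^k)$.
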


The rest of this section is devoted to the proof of Theorem~\ref{main-theorem}.
Starting from the complex structure equations 
given in Proposition~\ref{prop-paper1},  we will arrive at an appropriate reduction (see Proposition~\ref{prop-resumen} below) that is suitable for the classification of SnN complex structures. 


Firstly, we obtain several conditions derived from the fact that the coefficients 
$A, \ldots, P\in \C$ 
and $s,t \in\mathbb R$ in~\eqref{ecus-generales} are admissible (see Definition~\ref{admissible}). 
We notice that the Jacobi identity,
i.e. $d^2 \omega^k=0$ for $1 \leq k \leq 4$,
is equivalent to the following equations:
\begin{equation}\label{jacobi-general1}
\begin{array}{lll}
AH-BG+\bar BD=0,&\quad  AK = BK = 0,&\quad  t\, H = t\, K=t\, C = 0,\\[5pt]
K\bar N - P\bar C- \bar PG=0,&\quad   H\, \Real L = 0,&\quad   t\, D = t\, G=0,\\[5pt]
isA - F\bar P - N\bar C+ \bar N D=0,&\quad    \Real (P\bar H)=0,&\quad   itE+BP=0,\\[5pt]
isB-E\bar P - N\bar H=0,&\quad   \Real(M\bar B + N\bar E)=0,&\quad   itF + AP=0.
\end{array}
\end{equation}
For the condition on the center,
let us denote by $\{Z_k \}_{k=1}^4$ the dual basis to $\{\omega^k\}_{k=1}^4$.
Using the well-known formula $d \alpha(X,Y)=-\alpha([X,Y])$, for any $\alpha \in \frg^*$ and $X,Y \in \frg$,
and its extension to the complexification, it is easy to check from \eqref{ecus-generales}
that
\begin{equation}\label{ReZ4}
[X,Z_4+\bar Z_4]=0
\end{equation}
for any $X \in \frg$. Since the center of $\frg$ is $1$-dimensional, necessarily $\frg_1=\langle \Real  Z_4 \rangle$.
Furthermore, it is clear from equations~\eqref{ecus-generales} that
the vanishing of the tuples $(B, E, H)$, $(N, P, t)$, or $(C, D, G, H, K, M, P, s)$,
implies $\Imag Z_4\in\frg_1$,
$\langle \Real Z_3,\, \Imag Z_3 \rangle \subset \frg_1$, or
$\langle\Real Z_2,\, \Imag Z_2 \rangle \subset \frg_1$, respectively, which would give a contradiction to $\dim\frg_1=1$.
Hence,
the following conditions must be satisfied:
\begin{equation}\label{negative-conditions}
(B, E, H)\neq (0,0,0),\quad (N, P, t)\neq (0,0,0),\quad (C, D, G, H, K, M, P, s)\neq (0,\ldots,0).
\end{equation}

\medskip

In what follows, we consider~\eqref{ecus-generales} bearing in mind the conditions \eqref{jacobi-general1} and \eqref{negative-conditions}.
As noticed above, for the classification up to equivalence, one can study 
$\mathbb{C}$-linear isomorphisms $F\colon \frg'^{\,1,0}_{J'}
\longrightarrow \frg^{1,0}_{J}$ commuting with the differentials, i.e. $d \circ F=F \circ d$.
Thus, whenever an equivalence exists, we will construct it by means of an 
explicit change of $(1,0)$-bases.

\begin{lemma}\label{lema-t-0}
In the equations \eqref{ecus-generales}, one can assume $t=0$.
\end{lemma}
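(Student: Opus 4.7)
The plan is to assume $t\neq 0$ in \eqref{ecus-generales} and to produce an equivalence yielding $\tilde t=0$, in three short steps.

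\emph{Jacobi consequences and a non-degeneracy observation.} First I would collect what \eqref{jacobi-general1} forces in presence of $t\neq 0$. The relations $tH=tK=tC=tD=tG=0$ give $C=D=G=H=K=0$; the identities $itE+BP=0$ and $itF+AP=0$ yield $E=iBP/t$ and $F=iAP/t$; and the remaining identities $isA-F\bar P=0$, $isB-E\bar P=0$ then reduce to $A(s-|P|^2/t)=B(s-|P|^2/t)=0$. Combining $H=0$ with $E=iBP/t$, the condition $(B,E,H)\neq(0,0,0)$ from \eqref{negative-conditions} forces $B\neq 0$ (otherwise $E=0$ as well); hence $s=|P|^2/t$, equivalently $st=|P|^2$.

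\emph{The key change of basis.} I would then set
\[
\tilde\omega^1=\omega^1,\qquad \tilde\omega^2=\omega^2,\qquad \tilde\omega^3=\omega^3-(E/B)\,\omega^2,\qquad \tilde\omega^4=\omega^4.
\]
Since $EA=FB=iABP/t$, one immediately checks that $d\tilde\omega^3=0$. Expanding $\omega^{3\bar 3}$, $\omega^{2\bar 3}$ and $\omega^{3\bar 2}$ in the new basis using $\omega^3=\tilde\omega^3+(E/B)\tilde\omega^2$ and regrouping the terms of $d\omega^4$, the identities $E/B=iP/t$ and $s=|P|^2/t$ together make the coefficients of $\tilde\omega^{2\bar 2}$, $\tilde\omega^{2\bar 3}$ and $\tilde\omega^{3\bar 2}$ all collapse to zero simultaneously, while the term $it\,\tilde\omega^{3\bar 3}$ survives with its original coefficient. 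The other coefficients in $d\omega^4$ merely get repackaged (e.g., the coefficient of $\tilde\omega^{1\bar 2}$ becomes $M-iN\bar P/t$) and the shape of $d\omega^2$ is unchanged, so the new system still fits the form \eqref{ecus-generales}.

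\emph{Swap.} Finally I would set $\hat\omega^2:=\tilde\omega^3$, $\hat\omega^3:=\tilde\omega^2$ (and $\hat\omega^1:=\tilde\omega^1$, $\hat\omega^4:=\tilde\omega^4$). This interchanges $\tilde\omega^{2\bar 2}$ and $\tilde\omega^{3\bar 3}$ in $d\tilde\omega^4$: the surviving $it\,\tilde\omega^{3\bar 3}$ becomes $it\,\hat\omega^{2\bar 2}$ (so $\hat s=t$), while the vanishing $\tilde\omega^{2\bar 2}$ coefficient gives $\hat t=0$. The transformed equations are still of the form \eqref{ecus-generales}, which completes the proof. The main obstacle is precisely the triple cancellation in the second step: it succeeds only because the two Jacobi-derived identities $E=iBP/t$ and $s=|P|^2/t$ intervene in tandem in three different coefficients; once this bookkeeping is performed, the subsequent swap is routine.
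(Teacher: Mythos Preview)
Your proof is correct and follows essentially the same approach as the paper: both arguments use a linear change of basis in the $\langle\omega^2,\omega^3\rangle$-plane (made possible by $B\neq 0$, which both you and the paper derive from the Jacobi identity together with $(B,E,H)\neq(0,0,0)$) to eliminate the $\omega^{3\bar 3}$ coefficient, relying on the same Jacobi relations $itE+BP=0$ and $isB-E\bar P=0$. The only cosmetic difference is that the paper splits into the cases $E=0$ and $E\neq 0$ and normalizes via $\tau^2=E\omega^2-B\omega^3$, whereas your choice $\tilde\omega^3=\omega^3-(E/B)\omega^2$ followed by a swap handles both cases uniformly; this is a mild streamlining rather than a different idea.
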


\begin{proof}
Let us suppose that $t \neq 0$ in equations~\eqref{ecus-generales}.
By~\eqref{jacobi-general1}, we get $C = D = G = H = K = 0$.
Hence, conditions \eqref{jacobi-general1} and \eqref{negative-conditions} reduce to
\begin{equation}\label{jacobi-general-11}
\begin{array}{llll}
isA - F\bar P =0,&\quad itE+BP=0, &\quad \Real(M\bar B + N\bar E)=0,&\quad (N, P, t)\neq (0,0,0),
\\[5pt]
isB-E\bar P=0,&\quad  itF + AP=0,  &\quad  (B, E)\neq (0,0),&\quad (M, P, s)\neq (0,0,0).
\end{array}
\end{equation}

We first observe that $B\neq 0$, as otherwise the condition $itE=0$ would imply $E=0$,
which gives a contradiction to $(B, E)\neq (0,0)$. Bearing this in mind, we now consider two cases.

On the one hand, if $E=0$ then~\eqref{jacobi-general-11} implies $F=P=s=0$.
Taking $\tau^k = \omega^k$, for $k=1,4$, $\tau^2 = \omega^3$ and $\tau^3 = \omega^2$,
one directly gets equations of the form~\eqref{ecus-generales} for the new (1,0)-basis $\{\tau^k\}_{k=1}^4$
with $t_{\tau}=0$. We are denoting by $t_{\tau}$ the coefficient of $\tau^{3\bar3}$ in the equation $d \tau^4$.   

On the other hand, for $E\neq 0$  we consider the $(1,0)$-basis $\{\tau^k\}_{k=1}^4$ defined by
$\tau^k = \omega^k$, for $k=1, 3, 4$, and $\tau^2 = E\,\omega^2 - B\,\omega^3$. 
Then, the structure equations in terms of $\{\tau^k\}_{k=1}^4$ are again of the
form~\eqref{ecus-generales}.
Using~\eqref{jacobi-general-11} it can be directly seen that
the coefficient $t_{\tau}$ satisfies
$$
t_{\tau} = i\,s \frac{|B|^2}{|E|^2} + i\,t +\frac{B\, P}{E} -\frac{\bar{B}\, \bar{P}}{\bar{E}}
= \frac{\bar{B}}{|E|^2} (isB-E\bar P) + \frac{1}{E} (itE+B P) =0.
$$
\end{proof}

\begin{lemma}\label{lema-P-igual-0}
In the equations \eqref{ecus-generales}, in addition to $t=0$, we can also set $P=K=0$.
\end{lemma}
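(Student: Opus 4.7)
The plan is to proceed by case analysis on whether $K$ and $P$ vanish, using the Jacobi relations~\eqref{jacobi-general1} (already with $t=0$) together with the non-degeneracy conditions~\eqref{negative-conditions}.

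First I would handle $K \neq 0$. The relations $AK = BK = 0$ immediately force $A = B = 0$, hence $d\omega^2 = 0$. A short argument shows $H \neq 0$: otherwise $-E\bar P - N\bar H = 0$ reduces to $E\bar P = 0$, and then either the subcase $P = 0$ (in which $K\bar N = 0$ gives $N = 0$, violating $(N,P,t) \neq (0,0,0)$ with $t=0$) or the subcase $E = 0$ (so $B = E = H = 0$ contradicts $(B,E,H) \neq (0,0,0)$) leads to a contradiction. Granted $H \neq 0$, the $(1,0)$-basis change $\tau^k = \omega^k$ for $k \neq 4$ and $\tau^4 = \omega^4 + (\bar K/\bar H)\omega^2$ preserves~\eqref{ecus-generales}: $d\tau^4 = d\omega^4$ since $d\omega^2 = 0$, so $L, M, N, P, s$ are unchanged, while substituting $\omega^4 = \tau^4 - (\bar K/\bar H)\tau^2$ into $d\omega^3$ modifies only $K, C, D$, with the new $K$-coefficient becoming $K - H \cdot (K/H) = 0$.

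Next, with $K = 0$ in place, I would suppose $P \neq 0$. The relations $AP = BP = 0$ (arising from $itF + AP = 0$ and $itE + BP = 0$ with $t=0$) again give $A = B = 0$; the parallel argument applied to $-E\bar P - N\bar H = 0$ and $(B,E,H) \neq 0$ yields $H \neq 0$, and then $H\Real L = 0$ forces $\Real L = 0$. The residual Jacobi relations then take the form $E\bar P + N\bar H = 0$, $P\bar C + \bar P G = 0$, $F\bar P + N\bar C - \bar N D = 0$, and $\Real(P\bar H) = 0$. Exploiting these algebraic identities I would construct a further change of $(1,0)$-basis, coupling modifications of $\omega^3$ and $\omega^4$ (and, if necessary, $\omega^2$), that cancels the $\omega^{2\bar 3}$-term in $d\omega^4$ while keeping $K = 0$ and respecting~\eqref{ecus-generales}.

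The hard part is this last step: a naive single-coordinate modification fails, since adjusting $\omega^3$ by a multiple of $\omega^4$ (or vice versa) introduces off-form terms such as $\tau^{13}$, $\tau^{14}$, $\tau^{1\bar 4}$, or $\tau^{4\bar 1}$ that cannot be absorbed whenever $H, N$ or $P$ is nonzero. The correct transformation must therefore be a carefully coupled change whose compatibility with~\eqref{ecus-generales} is forced by the Jacobi constraints above; identifying this coupling and verifying that it simultaneously kills $P$, preserves $K = 0$, and keeps all other coefficients within their allowed slots is the main technical obstacle.
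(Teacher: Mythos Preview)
Your treatment of the case $K \neq 0$ is correct and is a genuine alternative to the paper's route: rather than killing $K$ directly, the paper first kills $P$ and then observes that the Jacobi relation $K\bar N - P\bar C - \bar P G = 0$ (with $P=0$ and $N\neq 0$) forces $K=0$ automatically. Your order works too, but costs you the extra burden of checking that $K$ stays zero after the subsequent step.

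The genuine gap is the step you flag as ``the main technical obstacle''. You correctly diagnose that modifying $\omega^3$ or $\omega^4$ by multiples of one another will not keep the equations in the form~\eqref{ecus-generales}, but you do not identify the change of basis that does work. The paper's trick is to act on the \emph{first two} forms instead: with $A=B=0$ (so $d\omega^2=0$) and $\Real L=0$, one sets
\[
\tau^1 = N\,\omega^1 + P\,\omega^2,\qquad \tau^2 = \omega^1,\qquad \tau^3 = \omega^3,\qquad \tau^4 = \omega^4.
\]
Then $d\tau^1 = d\tau^2 = 0$, and in $d\tau^4 = d\omega^4$ the combination $N\omega^{1\bar 3}-\bar N\omega^{3\bar 1}+P\omega^{2\bar 3}-\bar P\omega^{3\bar 2}$ becomes exactly $\tau^{1\bar 3}-\tau^{3\bar 1}$, so $P_\tau = 0$ and $N_\tau = 1$. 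The condition $\Real L = 0$ is precisely what is needed for the new coefficient of $\tau^{2\bar 2}$ to remain purely imaginary. This is the missing idea; without it your proof is incomplete.
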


\begin{proof}
By Lemma~\ref{lema-t-0} we can assume $t=0$.
If we suppose that $P\neq 0$, then by \eqref{jacobi-general1} 
we immediately get $A=B=0$.
Hence,
\eqref{jacobi-general1} and \eqref{negative-conditions} are simplified
to the following conditions:
\begin{equation}\label{jacobi-general21}
\begin{array}{lll}
K\bar N - P\bar C- \bar P G=0, & \ \quad H\,\Real L = 0, & \ \quad (E, H)\neq (0,0),\\[5pt]
F\bar P + N\bar C - \bar N D=0, & \ \quad \Real (P\bar H)=0, & \ \quad (N, P)\neq (0,0),\\[5pt]
E\bar P + N\bar H=0, & \ \quad \Real(N\bar E)=0, & \ \quad (C, D, G, H, K, M, P, s)\neq (0,\ldots,0).
\end{array}
\end{equation}

Since $E=-N\bar{H}/\bar{P}$,
the condition $(E,H)\neq (0,0)$ in~\eqref{jacobi-general21}
implies $H\neq 0$. In turn, this gives $\Real L=0$, again by~\eqref{jacobi-general21}.
We consider the $(1,0)$-basis $\{\tau^k\}_{k=1}^4$ defined by
$$
\tau^1 = N\,\omega^1 + P\,\omega^2 , \qquad \tau^2 = \omega^1, \qquad \tau^k = \omega^k,\quad k= 3, 4.
$$
Using $\Real L=0$, a direct calculation shows that the structure equations in terms of $\{\tau^k\}_{k=1}^4$ are again of the
form~\eqref{ecus-generales}, with corresponding coefficients $t_{\tau}=0$ and $P_{\tau}=0$ in the equation $d \tau^4$.

Finally, now that we have $P=t=0$, it suffices to use the second equation of~\eqref{jacobi-general1} to obtain $K=0$, as $N\neq 0$
by~\eqref{negative-conditions}.
\end{proof}

Taking into account Lemmas~\ref{lema-t-0} and~\ref{lema-P-igual-0}, we have:

\begin{proposition}\label{prop-resumen}
Let $J$ be a complex structure on an 8-dimensional NLA $\frg$ with $1$-dimensional center.
Then, there exists a basis of $(1,0)$-forms $\{\omega^k\}_{k=1}^4$ in terms of which the
complex structure equations of $(\frg,J)$ are of the form:
\begin{equation}\label{ecusG}
\left\{\begin{split}
d\omega^1 &= 0,\\[-4pt]
d\omega^2 &= A\,\omega^{1\bar 1} -B(\omega^{14}- \omega^{1\bar 4}),\\[-4pt]
d\omega^3 &= F\,\omega^{1\bar 1}  + C\,\omega^{12} +D\,\omega^{1\bar 2} +G\,\omega^{2\bar1}-E\,(\omega^{14}- \omega^{1\bar 4})
  -H\,(\omega^{24}-\omega^{2\bar 4}),\\[-4pt]
d\omega^4 &= L\,\omega^{1\bar 1}+i\,s\,\omega^{2\bar 2}+(M\,\omega^{1\bar 2}-\bar{M}\,\omega^{2\bar 1})+(N\,\omega^{1\bar 3}-\bar{N}\,\omega^{3\bar 1}),
 \end{split}\right.
\end{equation}
where the coefficients $A,\ldots, N\in\C$ and $s\in\mathbb R$ are admissible; in particular, they satisfy the conditions:
\begin{equation}\label{jacobi-general}
\begin{array}{llll}
AH-BG+\bar BD=0, &\,\,  isB- N\bar H=0,&\,\,   H\,\Real L = 0, &\,\,   (B, E, H)\neq (0,0,0),\\[5pt]
 \Real(M\bar B + N\bar E)=0, &\,\,   isA - N\bar C + \bar N D=0, &\,\,   N\neq 0,  &\,\,  
	(C, D, G, H, M, s)\neq (0,\ldots,0).
\end{array}
\end{equation}
Moreover, 
the complex structure $J$ belongs to Family I (resp. Family II) if and only if  $B=0$ (resp. $B\neq 0$) in the equations \eqref{ecusG}.
\end{proposition}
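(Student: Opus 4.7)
The plan is to obtain the statement as a direct bookkeeping consequence of Proposition~\ref{prop-paper1} together with Lemmas~\ref{lema-t-0} and~\ref{lema-P-igual-0}. First, I would observe that Proposition~\ref{prop-paper1} guarantees that any SnN complex structure on an 8-dimensional NLA $\frg$ can be written in the general form \eqref{ecus-generales}, with coefficients constrained by the Jacobi equations \eqref{jacobi-general1} and by the center conditions \eqref{negative-conditions}. Thus the starting point for the reduction is already in place.

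Next, I would invoke Lemma~\ref{lema-t-0} to pass to a new $(1,0)$-basis in which the coefficient $t$ of $\omega^{3\bar 3}$ in $d\omega^4$ vanishes. Once $t=0$, the equations retain the shape \eqref{ecus-generales} but the Jacobi relations $t\,H = t\,K = t\,C = t\,D = t\,G = 0$ and $itE+BP = 0$, $itF+AP=0$ are automatic. Then I would apply Lemma~\ref{lema-P-igual-0}: using the fact that $t=0$, a further change of basis makes $P=0$; together with $(N,P,t)\neq (0,0,0)$ this forces $N\neq 0$, and the second Jacobi equation $AK=0$ combined with $N\neq 0$ together with $K\bar N - P\bar C - \bar P G = 0$ (specialized to $P=0$) yields $K=0$ as well. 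This produces exactly the shape \eqref{ecusG}.

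With $t=P=K=0$ in hand, I would rewrite the surviving Jacobi relations from \eqref{jacobi-general1}. The identities $AK=BK=0$, $t\,H = t\,K = t\,C = t\,D = t\,G = 0$, $itE+BP = 0$, $itF+AP = 0$, $\Real(P\bar H) = 0$, and the equation involving $K\bar N - P\bar C - \bar P G$ all become trivial. What remains are precisely the six relations $AH-BG+\bar BD = 0$, $isA - N\bar C + \bar N D = 0$, $isB - N\bar H = 0$, $H\,\Real L = 0$, $\Real(M\bar B + N\bar E) = 0$, and $N\neq 0$, which are exactly the first block displayed in \eqref{jacobi-general}. For the center conditions I would reread \eqref{negative-conditions} after setting $P=K=0$: the tuple $(N,P,t)\neq(0,0,0)$ degenerates to $N\neq 0$, already listed; the tuple $(B,E,H)\neq(0,0,0)$ is preserved; and $(C,D,G,H,K,M,P,s)\neq (0,\dots,0)$ collapses to $(C,D,G,H,M,s)\neq (0,\dots,0)$.

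The only delicate point is checking that the two changes of basis introduced in the proofs of Lemmas~\ref{lema-t-0} and~\ref{lema-P-igual-0} are compatible in the sense that the second change of basis does not reintroduce a nonzero~$t$ or destroy the SnN character of $J$, i.e. that the center of $\frg$ in the new basis remains one-dimensional and spanned by $\Real Z_4$. The preservation of the shape \eqref{ecus-generales} under both transformations is already verified in the respective lemmas, so no new calculation is needed; one only has to observe that the SnN condition is intrinsic to $\frg$ and independent of the chosen $(1,0)$-basis, and therefore persists. Collecting these facts gives exactly the statement of Proposition~\ref{prop-resumen}.
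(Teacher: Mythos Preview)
Your proposal is correct and follows essentially the same approach as the paper: the paper's proof consists of precisely the sentence ``Taking into account Lemmas~\ref{lema-t-0} and~\ref{lema-P-igual-0}, we have'', and you have accurately expanded the bookkeeping of which Jacobi relations from \eqref{jacobi-general1} and center conditions from \eqref{negative-conditions} survive after setting $t=P=K=0$.
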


\begin{proof}
Lemmas~\ref{lema-t-0} and~\ref{lema-P-igual-0} directly imply the first part of the proposition. We now prove that $\dim H^{0,1}_{\db}(\frg,J)=3$ (that is, $J$ belongs to Family I) if and only if $B=0$. By \eqref{ecusG} it is clear that 
$H^{0,1}_{\db}(\frg,J)=\langle \omega^{\bar 1},\omega^{\bar 2},\omega^{\bar 4} \rangle$ when $B$ vanishes. 
Hence, it remains to prove that $B\neq0$ implies $\dim H^{0,1}_{\db}(\frg,J)= 2$. 

Suppose $B\neq0$, and let $\lambda\omega^{\bar 2}+\mu\omega^{\bar 3}$ be $\db$-closed 
for some $\lambda,\mu\in\C$ with $(\lambda,\mu)\neq (0,0)$. From the equations \eqref{ecusG} it follows that this implies $C=H=0$. Then, by \eqref{jacobi-general} we have $D=G=s=0$, 
together with the condition $\Real(M\bar B + N\bar E)=0$, where $B,M,N\neq0$.
Let $\{Z_k \}_{k=1}^4$ be the dual basis to $\{\omega^k\}_{k=1}^4$.
A direct calculation from \eqref{ecusG} shows that both
$$
U=\Real(N \bar{Z}_2 - M  \bar{Z}_3)
\quad\text{and}\quad 
JU=-\Imag(N \bar{Z}_2 - M  \bar{Z}_3)
$$ 
belong to the center of $\frg$. However, this implies $\text{dim}\,Z(\frg)>1$, which is a contradiction. 
\end{proof}

In the following Sections~\ref{reduc-fam-FI} and~\ref{reduc-fam-FII} we study the Families I and II, respectively, 
in order to prove the parts~\textrm{(i)} and~\textrm{(ii)} of Theorem~\ref{main-theorem}.

\subsection{Study of Family I}\label{reduc-fam-FI}	

We here accomplish the study up to equivalence of those complex structures belonging to Family~I. 
We first prove that all such complex structures are parametrized by the  equations~\eqref{FI-SnN} in Theorem~\ref{main-theorem}.
Then,
we classify them up to equivalence and 
compute the ascending central series of the underlying
$8$-dimensional nilpotent
Lie algebras, reaching Table~\ref{tab:valores-complejos}.


\begin{lemma}\label{lema-previo-FI}
Let $J$ be a complex structure in Family I.
Then, there exists a basis of $(1,0)$-forms $\{\omega^k\}_{k=1}^4$ 
satisfying \eqref{ecusG} with $B=F=C=H=M=0$, and 
\begin{equation}\label{jacobi1}
\begin{array}{llll}
isA + \bar N D=0,&\quad  \Real(N\bar E)=0,&\quad NE\neq 0,&\quad (D, G, s)\neq (0,0,0).
\end{array}
\end{equation}
\end{lemma}

\begin{proof}
We first observe that if we impose $B=0$ in \eqref{jacobi-general}, then we are forced to consider $H=0$.
Consequently, equations~\eqref{jacobi-general} become
\begin{equation*}
\begin{array}{llll}
isA - N\bar C + \bar N D=0,&\quad  \Real(N\bar E)=0,&\quad NE\neq 0,&\quad (C, D, G, M, s)\neq (0,0,0,0,0).
\end{array}
\end{equation*}

Let us show that the coefficients $C, F$, and $M$
in~\eqref{ecusG} can be set equal to zero.
This can be done by defining the new $(1,0)$-basis $\{\tau^k\}_{k=1}^4$ as follows:
$$
\tau^1 = \omega^1, \quad\
\tau^2 = \omega^2, \quad\
\tau^3 = \omega^3 +\frac{\bar M}{\bar N}\,\omega^2, \quad\
\tau^4 = \omega^4 -\frac{C}{E}\,\omega^2 + \frac{M\bar A+N\bar F}{N\bar E}\,\omega^1.
$$
Indeed, in terms of $\{\tau^k\}_{k=1}^4$ the complex structure equations are of the form~\eqref{ecusG}
with new coefficients
$A_\tau, \ldots, N_\tau,s_\tau$ 
satisfying
$
B_\tau = F_\tau= C_\tau = H_\tau = M_\tau = 0.
$
Renaming the basis and the coefficients, we directly get the result.
Notice that the conditions~\eqref{jacobi-general} reduce to~\eqref{jacobi1}.
\end{proof}

\begin{lemma}\label{lema-FI-reducida1}
For any complex structure in Family I, 
there is a $(1,0)$-basis $\{\omega^k\}_{k=1}^4$ satisfying
$$
d\omega^1 = 0,\ \ 
d\omega^2 = \varepsilon\,\omega^{1\bar1},\ \  
d\omega^3 = \omega^{14}+\omega^{1\bar4}+i\,\delta\,\varepsilon\, b\,\omega^{1\bar 2} + G\,\omega^{2\bar 1},\ \ 
d\omega^4 = L\,\omega^{1\bar1} + b\,\omega^{2\bar2}+i\,\delta (\omega^{1\bar3}-\omega^{3\bar1}),
$$
with $\varepsilon\in\{0,1\}$, $\delta = \pm 1$,  $G, L\in\mathbb C$ and $b\in\mathbb R$ such that $(G,b)\neq (0,0)$.
\end{lemma}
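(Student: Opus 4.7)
The plan is to apply a diagonal change of $(1,0)$-basis $\tau^1 = k\,\omega^1$, $\tau^2 = p\,\omega^2$, $\tau^3 = q\,\omega^3$, $\tau^4 = r\,\omega^4$ to the equations \eqref{ecusF1}, where $k,p,q,r\in\mathbb{C}^*$ are parameters to be successively fixed using the conditions \eqref{jacobi1}. Three normalizations are needed: the coefficients of $\tau^{14}$ and $\tau^{1\bar 4}$ in $d\tau^3$ both equal to $+1$, the coefficient of $\tau^{1\bar 3}$ in $d\tau^4$ equal to $i\delta$ with $\delta=\pm 1$, and the coefficient of $\tau^{1\bar 1}$ in $d\tau^2$ equal to $\varepsilon\in\{0,1\}$.

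For the first normalization, expanding $d\tau^3=q\,d\omega^3$ gives the two equations $-qE/(kr)=1$ and $qE/(k\bar r)=1$, which together force $r$ to be purely imaginary (write $r=i\delta\rho$ with $\rho>0$ and $\delta\in\{\pm 1\}$) and $q=-kr/E$; in particular $\bar q=\bar k\,r/\bar E$. For the second, a short computation shows the coefficient of $\tau^{1\bar 3}$ in $d\tau^4$ equals $rN/(k\bar q)=N\bar E/|k|^2$. The Jacobi relation $\Real(N\bar E)=0$ from \eqref{jacobi1} together with $NE\neq 0$ gives $N\bar E=i c$ for some $c\in\R\setminus\{0\}$; choosing $|k|^2=|c|$ and $\delta=c/|c|$ then makes this coefficient exactly $i\delta$, and by conjugation the coefficient of $\tau^{3\bar 1}$ becomes $-i\delta$. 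For the third, if $A=0$ set $p=1$, giving $\varepsilon=0$; otherwise set $p=|k|^2/A$, giving $\varepsilon=1$. In either case the resulting coefficient of $\tau^{2\bar 2}$ in $d\tau^4$, namely $b:=ris/|p|^2$, is real because $r$ is purely imaginary and $s\in\R$.

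The last step is to verify that the coefficient $D_\tau=qD/(k\bar p)$ of $\tau^{1\bar 2}$ in $d\tau^3$ equals $i\delta\varepsilon\,b$. When $\varepsilon=0$ (so $A=0$), the Jacobi relation $isA+\bar N D=0$ with $N\neq 0$ forces $D=0$, hence $D_\tau=0$. When $\varepsilon=1$, substituting $D=-isA/\bar N$ from Jacobi and the values of $p,q,r,k$ fixed above, and using the identity $\bar N E=-ic$ obtained by conjugating $N\bar E=ic$, simplifies $D_\tau$ to $i\delta\,b$ after cancellation. Finally, $(G,b)\neq(0,0)$ in the new basis follows because $G_\tau=0=b$ would force $G=s=0$, and then the Jacobi relation would also force $D=0$, contradicting $(D,G,s)\neq(0,0,0)$.

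The main obstacle will be this last verification: the identity $D_\tau=i\delta\varepsilon\,b$ is precisely what the Jacobi relation $isA+\bar N D=0$ imposes once the scaling normalizations are in place, and it is this rigid coupling that forces the apparently peculiar $i\delta\varepsilon\,b\,\omega^{1\bar 2}$ term in the statement.
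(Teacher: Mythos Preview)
Your proposal is correct and takes essentially the same approach as the paper: a diagonal change of basis $\tau^k=\lambda_k\,\omega^k$ starting from the equations~\eqref{ecusF1}. The paper's proof is terser, specifying directly $\lambda_1=|\Imag(N\bar E)|^{1/2}$, $\lambda_3=-i|\Imag(N\bar E)|^{1/2}/E$, $\lambda_4=i$, and $\lambda_2=1$ or $|\Imag(N\bar E)|/A$; it then remarks that the $\omega^{1\bar 2}$-coefficient is forced by $d^2\omega^4=0$, whereas you carry out that verification explicitly via the relation $isA+\bar N D=0$. One small notational point: the $\delta$ you introduce in writing $r=i\delta\rho$ is a free sign for $r$, while the $\delta$ appearing in the final equations is forced to be $\mathrm{sign}(\Imag(N\bar E))$ and does not actually depend on your choice of $r$ (it cancels in $rN/(k\bar q)=N\bar E/|k|^2$); your computations remain valid, but it would be cleaner not to conflate the two.
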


\begin{proof}
Starting with a basis of $(1,0)$-forms $\{\omega^k\}_{k=1}^4$ as given in 
Lemma~\ref{lema-previo-FI}, we consider $\{\tau^k\}_{k=1}^4$ defined by
$\tau^k = \lambda_k\, \omega^k$, for $1\leq k\leq 4$,
where
$$
\lambda_1 = |\Imag(N\bar E)|^{\nicefrac12},\quad
\lambda_2 =
	\begin{cases}
	\begin{array}{ll}
	1,& \text{if }A=0,\\
	\frac{|\Imag(N\bar E)|}{A},& \text{if }A\neq 0,
	\end{array}
	\end{cases}
\lambda_3 = - i\,\frac{|\Imag(N\bar E)|^{\nicefrac12}}{E},\quad
\lambda_4 = i.
$$
It suffices to rename the basis and the coefficients in the corresponding structure equations in order to get the desired result.
Here, we simply note that the value $\varepsilon=0$ (resp. $\varepsilon=1$) in the statement of the lemma comes from
the case $A=0$ (resp. $A\neq 0$).
Moreover $\delta=\pm 1$, where the sign precisely
corresponds to $\sign ( \Imag(N\bar E))$.
We note that the coefficient in $\omega^{1\bar2}$ comes from the condition $d^2\omega^4=0$.
\end{proof}

Using the previous lemma, in the following result we arrive at the desired reduced structure equations~\eqref{FI-SnN}
of Theorem~\ref{main-theorem} for complex structures in the Family I.

\begin{proposition}\label{Prop-FI}
Every complex structure $J$ in Family I can be described by equations of the form
$$
d\omega^1 = 0, \ \ 
d\omega^2 = \varepsilon\,\omega^{1\bar1}, \ \ 
d\omega^3 = \omega^{14}+\omega^{1\bar4} + a\,\omega^{2\bar 1} + i\,\delta\,\varepsilon\,b\,\omega^{1\bar 2}, \ \ 
d\omega^4 = i\,\nu\,\omega^{1\bar1} + b\,\omega^{2\bar2}+i\,\delta (\omega^{1\bar3}-\omega^{3\bar1}),
$$
where $\varepsilon,\nu\in\{0,1\}$, $\delta=\pm 1$, and $a,b\in\mathbb R$ with $a\geq 0$ and $(a,b)\neq (0,0)$.
\end{proposition}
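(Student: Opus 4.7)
My plan is to start from the reduced equations of Lemma~\ref{lema-FI-reducida1} and further normalise the two remaining complex parameters, $G$ and $L$, to $a\in\mathbb R^{\geq 0}$ and $i\nu$ with $\nu\in\{0,1\}$ via two successive changes of $(1,0)$-basis.

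The first step is a purely imaginary translation, $\tau^3 = \omega^3 + \rho\,\omega^1$ with $\rho\in i\mathbb R$, keeping the other $\tau^k = \omega^k$. Because $d\omega^1 = 0$ and no term in $d\omega^3$ involves $\omega^3$ or $\omega^{\bar 3}$, the equation for $d\tau^3$ retains its original form after renaming. When $d\omega^4$ is re-expressed in the new basis, however, the pair $\omega^{1\bar 3}-\omega^{3\bar 1}$ contributes an extra term $(\rho-\bar\rho)\,\tau^{1\bar 1}$, so the coefficient of $\tau^{1\bar 1}$ changes from $L$ to $L-2\delta\,\Imag\rho$. Choosing $\Imag\rho = \delta\,\Real L/2$ makes the new coefficient purely imaginary; henceforth I may assume $L = iL'$ with $L'\in\mathbb R$.

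The second step is a diagonal scaling $\tau^k = \mu_k\,\omega^k$. Requiring that the coefficients of $\tau^{14}+\tau^{1\bar 4}$ in $d\tau^3$ and of $\tau^{1\bar 3}-\tau^{3\bar 1}$ in $d\tau^4$ remain equal to $1$ and $i\delta$ forces $|\mu_1|=1$, $\mu_4\in\mathbb R^*$ and $\mu_3=\mu_1\mu_4$. When $\varepsilon=1$, the normalisation $d\tau^2=\tau^{1\bar 1}$ (together with the consistency between the $\tau^{1\bar 2}$-coefficient in $d\tau^3$ and the $\tau^{2\bar 2}$-coefficient in $d\tau^4$) forces $\mu_2=1$, whereas for $\varepsilon=0$ the scalar $\mu_2\in\mathbb C^*$ remains free. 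Writing $\mu_1=e^{i\theta}$, the $\tau^{2\bar 1}$-coefficient in $d\tau^3$ becomes $\mu_4\,Ge^{2i\theta}/\mu_2$; a suitable $\theta$ (and $\mu_2$, when $\varepsilon=0$) places it in $\mathbb R^{\geq 0}$ and defines $a$. Finally, $|\mu_4|$ (with sign matching that of $L'$) is chosen so that $i\mu_4 L'=i\nu$ with $\nu\in\{0,1\}$: if $L'=0$ any $\mu_4$ gives $\nu=0$, and if $L'\neq 0$ taking $\mu_4=1/L'$ gives $\nu=1$. The inequality $(a,b)\neq(0,0)$ is inherited from $(G,b)\neq(0,0)$ because all scaling factors are non-zero.

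The delicate point is the interplay in the second step: once $\mu_4$ is fixed by the normalisation of $\nu$, only $\theta$ (and $\mu_2$ when $\varepsilon=0$) remain to enforce $a\in\mathbb R^{\geq 0}$. A short case analysis according to whether $G$ vanishes, whether $\varepsilon=0$ or $1$, and the sign of $L'$ confirms that the available degrees of freedom always suffice to simultaneously achieve both normalisations.
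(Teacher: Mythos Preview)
Your proposal is correct and follows essentially the same strategy as the paper: a shift $\tau^3=\omega^3+\text{const}\cdot\omega^1$ to kill $\Real L$, a real scaling of $\omega^3,\omega^4$ by $\mu_4$ to normalise $\Imag L$ to $\nu\in\{0,1\}$, and a phase rotation of $\omega^1$ (with $\omega^3$ following via $\mu_3=\mu_1\mu_4$) to make $G$ real and nonnegative. The paper merely groups these moves differently---it combines the shift and real scaling into one change of basis and then applies the phase rotation---so the content is the same.
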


\begin{proof}
Consider the complex structure equations in Lemma~\ref{lema-FI-reducida1} in terms of a
$(1,0)$-basis $\{\sigma^k\}_{k=1}^4$ with coefficients $(\varepsilon_{\sigma},\,\delta_{\sigma},\,
b_{\sigma},\,G_{\sigma},\,L_{\sigma})$.
We first normalize the coefficient $L_{\sigma}$ by applying the change of basis
$$
\tau^{1} = \sigma^1, \ \quad
\tau^{2} = \sigma^2, \ \quad
\tau^{3} = \lambda \left( \sigma^3 + \frac{i\,\Real L_{\sigma}}{2\delta_{\sigma}}\,\sigma^1 \right),\ \quad
\tau^{4} = \lambda\,\sigma^4,
$$
where $\lambda\in\mathbb R^*$ is defined by
either $\lambda=1$ if $ \Imag L_{\sigma}=0$, or $\lambda=\frac{1}{\Imag L_{\sigma}}$ otherwise. 
The new structure equations still follow
Lemma~\ref{lema-FI-reducida1}, but now with coefficients $\varepsilon_{\tau}=\varepsilon_{\sigma}$,
$G_{\tau}=\lambda\,G_{\sigma}$, $b_{\tau}=\lambda\,b_{\sigma}$, $\delta_{\tau}=\delta_{\sigma}$
and $L_{\tau}=\lambda\,(L_{\sigma}-\Real L_{\sigma}) = i\,\lambda\,\Imag L_{\sigma} = i\nu\in\{0,i\}$,
in terms of the (1,0)-basis $\{\tau^{\,k}\}_{k=1}^4$.

Now, writing the complex coefficient $G_{\tau}$ as $G_{\tau}= |G_{\tau}| e^{i\alpha}$
for some $\alpha\in [0,2\,\pi)$, we define a new $(1,0)$-basis $\{\omega^i\}_{i=1}^4$ as follows:
$$\omega^{\,1} = e^{-\nicefrac{i\,\alpha}{2}}\,\tau^1, \quad
   \omega^{\,2} = \tau^2, \quad
   \omega^{\,3} = e^{-\nicefrac{i\,\alpha}{2}}\,\tau^3, \quad
   \omega^{\,4} = \tau^4.$$
This concludes the proof, simply denoting $a=|G_{\tau}|\geq 0$.
\end{proof}

After having reduced the complex structure equations of the Family~I,
next we  
study their equivalences in terms of the different parameters involved in our equations.

Let $J$ and $J'$ be two complex structures 
in Family I on an NLA $\frg$. Consider bases $\{\omega^{k}\}_{k=1}^4$ and 
$\{\omega'^{\,k}\}_{k=1}^4$ for $\frg_{J}^{1,0}$ and $\frg_{J'}^{1,0}$ 
satisfying structure equations as in Proposition~\ref{Prop-FI} with
parameters $(\varepsilon,\nu,\delta,a,b)$ and $(\varepsilon',\nu',\delta',a',b')$, respectively.
Any equivalence of complex structures, as presented at the beginning of
Section~\ref{complex-classif}, is defined by 
\begin{equation}\label{def-F-complejo}
F(\omega'^{\,i}) =\sum_{j=1}^4 \lambda_j^i\, \omega^j, \quad\text{for each } 1\leq i\leq 4,
\end{equation}
and satisfies the conditions
\begin{equation}\label{equivalence}
d\big(F(\omega'^{\,i})\big) = F(d\omega'^{\,i}),
\end{equation}
where the matrix $\Lambda=(\lambda^i_j)_{1\leq i,j\leq 4}$ belongs to ${\rm GL}(4,\mathbb{C})$.
To simplify our discussion, we will make use of the following notation.

\begin{notation}\label{Fij-complex}
We will denote by $\big[d\big(F(\omega'^{\,k})\big)-F(d\omega'^{\,k})\big]_{ij}$ the coefficient
for $\omega^{ij}$ in the expression $d\big(F(\omega'^{\,k})\big)-F(d\omega'^{\,k})$.
Similarly, for the coefficient of $\omega^{i\bar j}$.
\end{notation}

The following result reduces the general expression of the isomorphism~\eqref{def-F-complejo}.

\begin{lemma}\label{lema-previo-equiv-FI}
The forms $F(\omega'^{\,i})\in\frg^{1,0}_J$ satisfy the conditions:
$$
F(\omega'^{\,1})\wedge\omega^1=0, \quad \ 
    F(\omega'^{\,2})\wedge\omega^{12}=0, \quad \ 
    F(\omega'^{\,3})\wedge\omega^{123}=0, \quad \ 
    F(\omega'^{\,4})\wedge\omega^{14}=0.
$$
In particular, the matrix $\Lambda=(\lambda^i_j)_{1\leq i,j\leq 4}$ defining $F$ is triangular, 
and thus
$$
\Pi_{i=1}^4\lambda^i_i=\det\,\Lambda\neq 0.
$$
\end{lemma}

\begin{proof}
A direct calculation of the conditions \eqref{equivalence} for $i=1,2$ shows that
$\lambda^1_3 = \lambda^1_4 = \lambda^2_3 = \lambda^2_4 = 0$. 
Consequently, $F(\omega'^{\,1}), F(\omega'^{\,2})\in\langle \omega^1,\omega^2\rangle$.
In particular, $F(\omega'^{\,2})\wedge\omega^{12}=0$, as stated in the lemma.
Moreover, since $\Lambda$ 
becomes a block triangular matrix, we get 
$\text{det}\,\Lambda=\text{det}\,(\lambda^i_j)_{i,j=1,2}\cdot \text{det}\,(\lambda^i_j)_{i,j=3,4}\neq 0$.

If we now compute \eqref{equivalence} for $i=3$, then one in particular obtains 
$$\big[d\big(F(\omega'^{\,3})\big)-F(d\omega'^{\,3})\big]_{3\bar1} \, =  - i\,\delta\,\lambda^3_4=0,$$
which implies $\lambda^3_4=0$.
Thus, $F(\omega'^{\,3})\wedge\omega^{123}=0$ as required. Moreover, $0\neq \text{det}\,(\lambda^i_j)_{i,j=3,4} = \lambda^3_3\,\lambda^4_4$
and necessarily $\lambda^1_1\neq 0$, since these three coefficients are related by the
annihilation of
$$\big[d\big(F(\omega'^{\,3})\big)-F(d\omega'^{\,3})\big]_{14} \ = \ \lambda^3_3 - \lambda^1_1\,\lambda^4_4.$$
As a consequence of $\lambda^1_1,\,\lambda^3_3,\,\lambda^4_4$ being non-zero, the annihilation of
$$\big[d\big(F(\omega'^{\,3})\big)-F(d\omega'^{\,3})\big]_{13} \ = \ - \lambda^1_1\,\lambda^4_3,
\qquad
\big[d\big(F(\omega'^{\,3})\big)-F(d\omega'^{\,3})\big]_{24} \ = \ - \lambda^1_2\,\lambda^4_4$$
gives $\lambda^1_2=\lambda^4_3=0$.
In particular, we conclude that $F(\omega'^{\,1})\wedge\omega^1=0$.
Finally, from
$$0=\big[d\big(F(\omega'^{\,3})\big)-F(d\omega'^{\,3})\big]_{12} \ = \ -\lambda^1_1\,\lambda^4_2,$$
we obtain $\lambda^4_2=0$, i.e.  $F(\omega'^{\,4})\wedge\omega^{14}=0$.
\end{proof}

As a consequence, a first relation between the 
tuples $(\varepsilon,\nu,\delta,a,b)$ and $(\varepsilon',\nu',\delta',a',b')$ is attained:

\begin{proposition}\label{equiv-param-FI}
If the complex structures $J$ and $J'$ are equivalent, then
$$
\varepsilon' = \varepsilon,\quad \nu'=\nu,\quad \delta'=\delta.
$$
Moreover, there exists an isomorphism~\eqref{def-F-complejo}
satisfying the conditions in Lemma~\ref{lema-previo-equiv-FI} and
\begin{equation*}
\lambda^1_1 = e^{i\theta}, \qquad \lambda^4_4=\lambda\in\mathbb R^*, \qquad \lambda^3_3=\lambda\,e^{i\theta}, \qquad \nu(1-\lambda)=0, \qquad \varepsilon(1-\lambda^2_2)=0,
\end{equation*}
where $\theta\in[0,2\,\pi)$.
\end{proposition}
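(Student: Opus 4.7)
The plan is to exploit the triangular form of $\Lambda$ supplied by Lemma~\ref{lema-previo-equiv-FI} and extract each of the five desired conclusions from one carefully chosen coefficient in the identities $d(F(\omega'^{\,i})) = F(d\omega'^{\,i})$ for $i=2,3,4$. Because the Family~I equations have very few nonzero terms, each such identity collapses to a short list of scalar equations in the entries $\lambda^i_j$, and no additional change of basis will be needed.

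First I would compute both sides of the $i=2$ identity and compare the coefficients of $\omega^{1\bar 1}$: the left side reads $\lambda^2_2\,\varepsilon$, while the right side reads $\varepsilon'\,|\lambda^1_1|^2$. Since $\lambda^1_1\lambda^2_2\neq 0$ and $\varepsilon,\varepsilon'\in\{0,1\}$, this immediately yields $\varepsilon=\varepsilon'$, and when $\varepsilon=1$ forces $\lambda^2_2=|\lambda^1_1|^2$ (a relation that will be upgraded once $|\lambda^1_1|=1$ is established).

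Next I would turn to the $i=3$ identity and read off the coefficient of $\omega^{1\bar 4}$: on the left it equals $\lambda^3_3$, and on the right only the term $F(\omega'^{\,1})\wedge\overline{F(\omega'^{\,4})}$ contributes, giving $\lambda^1_1\bar\lambda^4_4$. Combined with the already known relation $\lambda^3_3 = \lambda^1_1\lambda^4_4$ from Lemma~\ref{lema-previo-equiv-FI}, this forces $\lambda^4_4\in\R^*$, and I set $\lambda^4_4=\lambda$. Then, turning to the $i=4$ identity, the $\omega^{1\bar 3}$ coefficient gives $\delta\lambda = \delta'\,|\lambda^1_1|^2\,\lambda$, which (since $\delta,\delta'\in\{\pm1\}$ and $\lambda\in\R^*$) forces both $|\lambda^1_1|^2=1$ and $\delta=\delta'$. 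Writing $\lambda^1_1=e^{i\theta}$ with $\theta\in[0,2\pi)$, the identity $\lambda^3_3 = \lambda^1_1\lambda^4_4$ gives $\lambda^3_3 = \lambda\,e^{i\theta}$ automatically, and the relation from the previous paragraph upgrades to $\varepsilon(1-\lambda^2_2)=0$. Finally, separating real and imaginary parts in the $\omega^{1\bar 1}$ coefficient of the $i=4$ identity yields $\lambda\nu = \nu'$ on the imaginary side, hence $\nu=\nu'$ and $\nu(1-\lambda)=0$.

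The only real obstacle is the bookkeeping: one must recognize that the four coefficients $\omega^{1\bar 1}$ in $d\omega^{\,2}$, $\omega^{1\bar 4}$ in $d\omega^{\,3}$, and $\omega^{1\bar 3}$ together with $\omega^{1\bar 1}$ in $d\omega^{\,4}$, already suffice to pin down all five stated conclusions. The remaining coefficient equations in the three identities only constrain the off-diagonal entries $\lambda^2_1, \lambda^3_1, \lambda^3_2, \lambda^4_1$, which do not appear in the statement of the proposition and will enter only later, when refining the parameters $(a,b)$ in Tables~\ref{tab:valores-complejos} and~\ref{tab:valores-complejos2}.
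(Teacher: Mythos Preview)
Your proposal is correct and follows essentially the same route as the paper's proof: the same handful of coefficients ($\omega^{1\bar 1}$ in the $i=2$ identity, $\omega^{14}$ and $\omega^{1\bar 4}$ in $i=3$, and $\omega^{1\bar 3}$, $\omega^{1\bar 1}$ in $i=4$) are used to pin down $\varepsilon'=\varepsilon$, $\lambda^4_4\in\mathbb{R}^*$, $\delta'=\delta$ with $|\lambda^1_1|=1$, and $\nu'=\nu$ with $\nu(1-\lambda)=0$. One small point: the relation $\lambda^3_3=\lambda^1_1\lambda^4_4$ that you cite ``from Lemma~\ref{lema-previo-equiv-FI}'' is established in the \emph{proof} of that lemma (via the $\omega^{14}$ coefficient) rather than in its statement, so the paper simply re-derives it here alongside the $\omega^{1\bar 4}$ coefficient --- but this is a presentational detail, not a gap.
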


\begin{proof}
We first observe that Lemma~\ref{lema-previo-equiv-FI} must hold in order to have an equivalence between $J$ and $J'$ defined by $F$. Taking this as a starting point, let us recalculate the
conditions \eqref{equivalence} for each $1\leq i\leq 4$.
One can easily check that $F(d\omega'^{\,1}) = d\big(F(\omega'^{\,1})\big)$. 
For $i=2$, one simply has
\begin{equation}\label{condition-eps}
\begin{split}
0 = d\big(F(\omega'^{\,2})\big)-F(d\omega'^{\,2}) &= (\varepsilon\,\lambda^2_2 - \varepsilon'\,|\lambda^1_1|^2)\,\omega^{1\bar1}.
\end{split}
\end{equation}
If $\varepsilon=0$
then $\varepsilon'=0$, as Lemma~\ref{lema-previo-equiv-FI} states $\lambda^1_1\neq 0$.
Similarly, if $\varepsilon=1$ then $\lambda^2_2=\varepsilon'\,|\lambda^1_1|^2\neq 0$, and the only possibility
is taking $\varepsilon'=1$. These observations give 
$\varepsilon'=\varepsilon$.

For $i=3$, we highlight the following terms:
$$
\big[d\big(F(\omega'^{\,3})\big)-F(d\omega'^{\,3})\big]_{14} \ = \ \lambda^3_3 - \lambda^1_1\,\lambda^4_4, \qquad
\big[d\big(F(\omega'^{\,3})\big)-F(d\omega'^{\,3})\big]_{1\bar 4} \ = \ \lambda^3_3-\lambda^1_1\,\bar\lambda^4_4.
$$
Their annihilation leads to
\begin{equation}\label{coef-reales}
\lambda^4_4=\lambda\in\mathbb R^*, \qquad \lambda^3_3=\lambda\,\lambda^1_1.
\end{equation}

For $i=4$, one can take into account~\eqref{coef-reales} to get
$$
0 = \big[d\big(F(\omega'^{\,4})\big)-F(d\omega'^{\,4})\big]_{1\bar 3} \ = \  i\,\lambda\,(\delta-\delta'\,|\lambda^1_1|^2).
$$
Since $\lambda,\lambda^1_1\neq0$ and $\delta,\delta'\in\{-1,1\}$,
one necessarily has
$$\delta'=\delta,\quad |\lambda^1_1|^2=1.$$ In particular, we can set $\lambda^1_1= e^{i\,\theta}$, for some $\theta\in[0,2\pi)$.  Finally,
$$
0 = \big[d\big(F(\omega'^{\,4})\big)-F(d\omega'^{\,4})\big]_{1\bar 1} \ = \  i\,(\nu\,\lambda - \nu') - (b'\,|\lambda^2_1|^2 + 2\,\delta\,\Imag(\lambda^3_1\,e^{-i\theta})).
$$
The imaginary part of the previous equation implies that either $\nu'=\nu=0$
or $\nu'=\nu=1$ with $\lambda=1$, since $\lambda\neq 0$ and $\nu,\nu'\in\{0,1\}$.
Notice that this is equivalent to
$$\nu'=\nu,\quad \nu(1-\lambda)=0.$$
The expression $\varepsilon(1-\lambda^2_2)=0$ comes from rewriting~\eqref{condition-eps}.
\end{proof}

From now on, in order to determine the space of complex structures up to equivalence, we can
fix parameters $\varepsilon, \nu, \delta$ and simply identify $J$ and $J'$
with the pairs $(a,b)$ and $(a', b')$, respectively. Recall that $(a,b),\,(a',b')\neq (0,0)$ and $a,\,a'\geq 0$.

\begin{proposition}\label{equiv-a-b-FI}
The complex structures $J$ and $J'$ are equivalent if and only if there exists an isomorphism given by
\begin{equation}\label{equivalencia}
F(\omega'^{\,1}) = e^{i\theta}\,\omega^1,\quad F(\omega'^{\,2}) = \lambda^2_2\,\omega^2,\quad
F(\omega'^{\,3}) = \lambda\,e^{i\theta}\,\omega^3,\quad F(\omega'^{\,4}) = \lambda\,\omega^4,
\end{equation}
where $\theta\in[0,2\pi)$, $\lambda^2_2\in\mathbb C^*$, $\lambda\in\mathbb R^*$ and
\begin{equation}\label{equivalenciasI}
\Imag (\lambda_2^2\,e^{-2i\,\theta}) = 0,\quad \nu\,(1-\lambda) =0,\quad
\varepsilon\,(1 - \lambda^2_2 ) = 0.
\end{equation}
Moreover, the parameters $(a,b)$ and $(a', b')$ that respectively determine $J$ and $J'$
are related by
\begin{equation}\label{condiciones1}
a' = a\,\frac{\lambda}{\lambda^2_2\,e^{-2i\theta}},\qquad
b' = b\,\frac{\lambda}{|\lambda^2_2|^2}.
\end{equation}
\end{proposition}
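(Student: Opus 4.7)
The plan is to establish both directions of the biconditional by exploiting the constraints already derived in Lemma~\ref{lema-previo-equiv-FI} and Proposition~\ref{equiv-param-FI}. The ``if'' direction is a direct verification: assuming $F$ is defined by~\eqref{equivalencia} with $\theta,\lambda^2_2,\lambda$ satisfying~\eqref{equivalenciasI} and~\eqref{condiciones1}, I would check equation-by-equation that $d(F(\omega'^{\,i})) = F(d\omega'^{\,i})$ holds for $i=1,2,3,4$. The key expansions needed are $F(\omega'^{1\bar 1}) = \omega^{1\bar 1}$, $F(\omega'^{2\bar 2}) = |\lambda^2_2|^2\,\omega^{2\bar 2}$, $F(\omega'^{2\bar 1}) = \lambda^2_2\, e^{-i\theta}\,\omega^{2\bar 1}$, $F(\omega'^{1\bar 2}) = \overline{\lambda^2_2}\, e^{i\theta}\,\omega^{1\bar 2}$, $F(\omega'^{14}) = \lambda\, e^{i\theta}\,\omega^{14}$ (and analogously for $\omega'^{1\bar 4}$), together with $F(\omega'^{1\bar 3} - \omega'^{3\bar 1}) = \lambda\,(\omega^{1\bar 3} - \omega^{3\bar 1})$. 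Substituting these into~\eqref{equivalence} and matching coefficients in each bidegree reduces the system precisely to~\eqref{condiciones1} together with the constraints in~\eqref{equivalenciasI}.

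For the ``only if'' direction, I begin with an arbitrary equivalence $F$ as in~\eqref{def-F-complejo}. Lemma~\ref{lema-previo-equiv-FI} already enforces the triangular pattern, and Proposition~\ref{equiv-param-FI} fixes the diagonal entries $(\lambda^1_1,\lambda^3_3,\lambda^4_4) = (e^{i\theta}, \lambda\, e^{i\theta},\lambda)$ together with the auxiliary conditions $\nu(1-\lambda) = 0 = \varepsilon(1-\lambda^2_2)$. Two remaining coefficients then produce~\eqref{condiciones1}: the coefficient of $\omega^{2\bar 1}$ in $d(F(\omega'^{\,3})) - F(d\omega'^{\,3})$ gives $\lambda\, e^{i\theta}\, a = a'\,\lambda^2_2\, e^{-i\theta}$, while the coefficient of $\omega^{2\bar 2}$ in $d(F(\omega'^{\,4})) - F(d\omega'^{\,4})$ yields $\lambda\, b = b'\,|\lambda^2_2|^2$. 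Since $a,a'\in\mathbb{R}^{\geq 0}$ and $\lambda\in\mathbb{R}^*$, the first relation forces $\Imag(\lambda^2_2\, e^{-2i\theta}) = 0$ whenever $a\neq 0$; when $a = 0$, this condition can be imposed by replacing $F$ with another diagonal equivalence without altering the parameter relations.

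The final step is to show that the off-diagonal entries $\lambda^2_1, \lambda^3_1, \lambda^3_2, \lambda^4_1$ can be simultaneously taken to be zero, so that $F$ has the purely diagonal form~\eqref{equivalencia}. Matching the coefficient of $\omega^{1\bar 2}$ in $d(F(\omega'^{\,4})) = F(d\omega'^{\,4})$ expresses $\lambda^3_2$ as a multiple of $\overline{\lambda^2_1}$; the coefficient of $\omega^{1\bar 1}$ in $d(F(\omega'^{\,3})) = F(d\omega'^{\,3})$ determines $\lambda^4_1$ in terms of $\lambda^2_1$ and $\lambda^3_2$; and the real part of the coefficient of $\omega^{1\bar 1}$ in $d(F(\omega'^{\,4})) = F(d\omega'^{\,4})$ expresses $\Imag(e^{-i\theta}\lambda^3_1)$ in terms of $|\lambda^2_1|^2$. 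Hence setting $\lambda^2_1 = \lambda^3_1 = 0$ is consistent and forces $\lambda^3_2 = \lambda^4_1 = 0$, yielding the diagonal equivalence~\eqref{equivalencia}. The main obstacle is the bookkeeping involved in matching coefficients in all bidegrees simultaneously; the conceptual subtlety lies in the case $a = 0$, where $\Imag(\lambda^2_2\, e^{-2i\theta}) = 0$ is not forced by the parameter relations themselves but can be achieved as a normalization thanks to the ``if'' direction.
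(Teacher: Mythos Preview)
Your proposal is correct and follows essentially the same approach as the paper's own proof: both extract the relations~\eqref{condiciones1} from the coefficients $[d(F(\omega'^{\,3}))-F(d\omega'^{\,3})]_{2\bar 1}$ and $[d(F(\omega'^{\,4}))-F(d\omega'^{\,4})]_{2\bar 2}$, observe that the off-diagonal entries $\lambda^2_1,\lambda^3_1,\lambda^3_2,\lambda^4_1$ do not enter these relations, and then verify that setting them to zero solves the remaining constraint equations. Your treatment is somewhat more explicit than the paper's---you spell out the ``if'' direction and the case $a=0$ separately, whereas the paper simply remarks that the off-diagonal parameters ``can be chosen to be zero'' and that $\Imag(\lambda^2_2 e^{-2i\theta})=0$ ``comes from imposing $a'\in\mathbb R$''---but the underlying argument is the same.
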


\begin{proof}
According to the previous results, if $J$ and $J'$ are equivalent, then there exists an
isomorphism~$F$ defined by~\eqref{def-F-complejo} in the conditions of~Proposition~\ref{equiv-param-FI}.
We must ensure that $F$ fulfills \eqref{equivalence} for each $1\leq i\leq 4$.

First, one checks that the desired conditions are equivalent to the following equations:
\begin{equation*}
\begin{array}{lll}
0&\!\!=\!\!& \big[d\big(F(\omega'^{\,3})\big)-F(d\omega'^{\,3})\big]_{1\bar 1} \ = \ \,
	\varepsilon\,(\lambda^3_2 - i\,\delta\,b'\,\bar\lambda^2_1\,e^{i\,\theta})
	-\bar\lambda^4_1\,e^{i\theta} - a'\,\lambda^2_1\,e^{-i\theta},\\[5pt]
0&\!\!=\!\!& \big[d\big(F(\omega'^{\,3})\big)-F(d\omega'^{\,3})\big]_{1\bar 2} \ = \ \,
	i\,\delta\,\varepsilon\,e^{i\theta}\,(b\,\lambda - b'\,\bar\lambda^2_2),\\[5pt]
0&\!\!=\!\!& \big[d\big(F(\omega'^{\,3})\big)-F(d\omega'^{\,3})\big]_{2\bar 1} \ = \ \,
	a\,\lambda\,e^{i\theta} - a'\,\lambda^2_2\, e^{-i\theta},\\[5pt]
0&\!\!=\!\!& \big[d\big(F(\omega'^{\,4})\big)-F(d\omega'^{\,4})\big]_{1\bar 1} \ = \ \,
	-2\,\delta\,\Imag(\lambda^3_1\,e^{-i\theta})-b'\,|\lambda^2_1|^2,\\[5pt]
0&\!\!=\!\!& \big[d\big(F(\omega'^{\,4})\big)-F(d\omega'^{\,4})\big]_{1\bar 2} \ = \ \,
	-i\,\delta\,\bar\lambda^3_2\,e^{i\theta} - b'\,\lambda^2_1\,\bar\lambda^2_1,\\[5pt]
0&\!\!=\!\!& \big[d\big(F(\omega'^{\,4})\big)-F(d\omega'^{\,4})\big]_{2\bar 2} \ = \ \, b\,\lambda - b'\,|\lambda^2_2|^2.
\end{array}
\end{equation*}
Now, notice that the pairs $(a,b)$ and $(a', b')$,
which determine the complex structures $J$
and $J'$, are related by
$\big[d\big(F(\omega'^{\,3})\big)-F(d\omega'^{\,3})\big]_{2\bar 1}$ and
$\big[d\big(F(\omega'^{\,4})\big)-F(d\omega'^{\,4})\big]_{2\bar 2}$. Hence,
$a'$ and $b'$ are given by these expressions, obtaining
\eqref{condiciones1}.
In particular, the equivalence between $J$ and $J'$ only depends on
the parameters $\theta$, $\lambda$, and $\lambda^2_2$. Hence, the parameters
$\lambda^i_j$ for $i\neq j$ do not affect the relation \eqref{condiciones1}, and they can be
chosen to be zero. This solves the remaining equations and gives \eqref{equivalencia}.

Finally, the first expression in \eqref{equivalenciasI}
comes from imposing $a'\in\mathbb R$ in \eqref{condiciones1} whereas the other
two are a direct consequence of Proposition~\ref{equiv-param-FI}.
\end{proof}

Finally we can set the main result about equivalences of complex structures in Family I:

\begin{theorem}\label{equivalencias-familiaI}
Up to equivalence, the complex structures 
in Proposition~\ref{Prop-FI} 
%
%
%
%
are classified as follows:
$$
\begin{array}{rlrl}
{\rm \ (i)} & (\varepsilon, \nu, a, b) = (0,0,0,1),\, (0,0, 1, 0),\,   (0,0, 1, 1);
	&\quad {\rm (iii)} & (\varepsilon, \nu, a, b) = (1,0, 0,1),\,  (1,0, 1,|b|);\\[3pt]
{\rm (ii)} & (\varepsilon, \nu, a, b) = (0,1, 0, \nicefrac{b}{|b|}),\,  (0,1, 1,b);
	&\quad {\rm (iv)} & (\varepsilon, \nu, a, b) = (1,1, a, b).
\end{array}
$$

\end{theorem}

\begin{proof}
Let us study different cases depending on the values of the pair $(\varepsilon, \nu)$. Recall that
the conditions \eqref{equivalenciasI}--\eqref{condiciones1} given in Proposition~\ref{equiv-a-b-FI} must be satisfied,
namely
$$
a' = a\,\frac{\lambda}{\lambda^2_2\,e^{-2i\theta}},\qquad
b' = b\,\frac{\lambda}{|\lambda^2_2|^2},
$$
where
$\Imag (\lambda_2^2\,e^{-2i\,\theta}) = 0$, 
$\nu\,(1-\lambda) = 0$, and $\varepsilon\,(1 - \lambda^2_2 ) = 0$.
These expressions will give us the desired equivalences between $(a,b)$ and $(a',b')$, thus
between our complex structures.

\begin{itemize}
\item[(i)] $(\varepsilon, \nu)=(0,0)$:  There are no restrictions on $\lambda$ and $\lambda^2_2$, so
it is possible to normalize $a$ and/or $b$ (i.e. take $a'=1$ or $b'=1$) when they are non-zero.
Indeed, this is easy when $ab=0$, whereas for $ab\neq 0$ one can take $\theta=0$, $\lambda=\nicefrac{b}{a^2}$ and $\lambda^2_2=\nicefrac ba$.
\item[(ii)] $(\varepsilon, \nu)=(0,1)$:  In this case $\lambda=1$ and $\lambda^2_2\in\mathbb C^*$ is a free parameter.  When $a$ is non-zero, we can normalize it.  If $a=0$, we can choose $\lambda^2_2=\sqrt{|b|}$, and thus $b'=\pm1$.
\item[(iii)] $(\varepsilon, \nu)=(1,0)$:  From the expressions above we get $\lambda^2_2 = 1$, so $a' = a\lambda e^{2i\theta}$ and $b' = b\lambda$, for $\lambda\in\mathbb R^*$.
Observe that $e^{2i\theta}$ is a real number, so the only possible choices
are $\theta=0$ or $\theta=\nicefrac{\pi}{2}$.
If $a=0$ one can normalize $b'$, and if $a>0$ we can take $a'=1$ and $b'\geq 0$.  In fact,
in the last case it suffices to consider
$e^{2i\theta}=\nicefrac{b}{|b|}$
and $\lambda = \nicefrac{b}{a|b|}$.
\item[(iv)] $(\varepsilon, \nu)=(1,1)$:  We are forced to impose $\lambda = \lambda^2_2 = 1$, hence $a' = a e^{2i\theta}$ and $b' = b$.  Since $a,\,a'\geq 0$, necessarily $e^{2i\theta}=1$ and $a'=a$.\end{itemize}
\end{proof}

To complete the proof of Theorem~\ref{main-theorem} \textrm{(i)}, it remains to study the
ascending type of the Lie algebras underlying Family~I. 
For this we take as starting point the structure equations in Proposition~\ref{Prop-FI}.
Let $\{Z_k \}_{k=1}^4$ be the dual basis to $\{\omega^k\}_{k=1}^4$.
Then, a generic (real) element $X \in \frg$ can be written as 
\begin{equation}\label{vector-Z}
X = \sum_{i=1}^4 \alpha_i Z_i + \sum_{i=1}^4 \bar{\alpha}_i \bar Z_i,
\end{equation}
where $\alpha_i\in\mathbb C$, and $\bar Z_i$ is the conjugate of $Z_i$.
From the equations in Proposition~\ref{Prop-FI}, it follows that the brackets $[X, Z_k]$, for $1 \leq k \leq 4$,
are given by
\begin{equation}\label{corchetes-FI}
\begin{array}{l}
 [X, Z_1] =\varepsilon\, \bar{\alpha}_1 (Z_2-\bar Z_2) +\! (\alpha_4+\bar{\alpha}_4+i\,\delta\,\varepsilon\, b\,\bar{\alpha}_2) Z_3
- a\, \bar{\alpha}_2\, \bar Z_3+ i\,\nu\, \bar{\alpha}_1(Z_4+\bar Z_4) + i\,\delta\, \bar{\alpha}_3 (Z_4 - \bar Z_4),\\[5pt]
 [X, Z_2] = a\, \bar{\alpha}_1\, Z_3 +  i\,\delta\,\varepsilon\, b\,\bar{\alpha}_1\, \bar Z_3 + b\, \bar{\alpha}_2 (Z_4 - \bar Z_4),\\[5pt]
 [X, Z_3] = - \, i\,\delta\, \bar{\alpha}_1 (Z_4 - \bar Z_4),\\[5pt]
 [X, Z_4] = -\, \alpha_1\, Z_3  - \bar{\alpha}_1\, \bar Z_3.
\end{array}
\end{equation}
Note that since $X$ is real, the bracket $[X, \bar Z_k]$ is just the conjugate of $[X, Z_k]$, for $1 \leq k \leq 4$.
Recall that $\varepsilon,\nu\in\{0,1\}$, $\delta=\pm 1$, and $a,b\in\mathbb R$ with $a\geq 0$ and $(a,b)\neq (0,0)$.

Clearly, $[X, Z_4-\bar Z_4]=0$ for every $X\in\frg$,
so 
$\frg_1=\langle \Imag Z_4 \rangle$. 
Observe that this is consistent with \eqref{ReZ4}, as the change of basis in the proof of
Lemma~\ref{lema-FI-reducida1} switches the real and imaginary parts of~$\omega^4$, thus of~$Z_4$.

\begin{lemma}\label{lema-g2}
In the conditions above, the term $\frg_2$ in the ascending central series is given by:
\begin{itemize}
\item[(i)] for $a\neq0$: $\frg_2=\langle \Real Z_3,\Imag Z_3,\Imag Z_4 \rangle$
and so $\dim \frg_2 = 3$;
\smallskip
\item[(ii)] for $a=0$ and $\varepsilon=1$: $\frg_2=\langle 2\delta\,\Imag Z_2 + b\,\Real Z_4,\Real Z_3,\Imag Z_3,\Imag Z_4 \rangle$, hence $\dim \frg_2 = 4$;
\smallskip
\item[(iii)] for $a=\varepsilon=0$: 
$\frg_2=\langle \Real Z_2,\Imag Z_2,\Real Z_3,\Imag Z_3,\Imag Z_4 \rangle$
and so $\dim \frg_2 = 5$.
\end{itemize}
\end{lemma}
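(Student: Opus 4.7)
The plan is to characterize $\frg_2$ directly from the bracket formulas~\eqref{corchetes-FI}. Recall that $Y\in\frg_2$ if and only if $[Y,W]\in\frg_1=\langle\Imag Z_4\rangle=\langle Z_4-\bar Z_4\rangle$ for every $W\in\frg$. Writing a generic real element $Y=\sum_{i=1}^4\beta_i Z_i+\sum_{i=1}^4\bar\beta_i\bar Z_i$ as in~\eqref{vector-Z}, it is enough to impose the condition on the four complex vectors $W=Z_1,Z_2,Z_3,Z_4$, because the brackets $[Y,\bar Z_k]$ are then obtained by conjugation.

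First I would substitute $X=Y$ in~\eqref{corchetes-FI}. The equation $[Y,Z_4]=-\beta_1 Z_3-\bar\beta_1\bar Z_3$ immediately forces $\beta_1=0$. Assuming this, the brackets $[Y,Z_3]=0$ and $[Y,Z_2]=b\bar\beta_2(Z_4-\bar Z_4)$ lie automatically in $\langle Z_4-\bar Z_4\rangle$, so no new constraints arise from them. All the remaining information is encoded in
\[
[Y,Z_1]=(\beta_4+\bar\beta_4+i\,\delta\,\varepsilon\, b\,\bar\beta_2)\,Z_3-a\,\bar\beta_2\,\bar Z_3+i\,\delta\,\bar\beta_3(Z_4-\bar Z_4),
\]
so requiring this to lie in $\langle Z_4-\bar Z_4\rangle$ yields the two scalar conditions
\[
a\,\bar\beta_2=0,\qquad 2\,\Real\beta_4+i\,\delta\,\varepsilon\, b\,\bar\beta_2=0,
\]
while $\beta_3$ and $\Imag\beta_4$ remain free.

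The last step is a case-by-case analysis according to the values of $a$ and $\varepsilon$, bearing in mind that $(a,b)\neq(0,0)$ and $a\geq 0$. If $a\neq 0$, the first equation gives $\beta_2=0$ and hence the second forces $\Real\beta_4=0$, so $Y$ is a real combination of $\Real Z_3,\,\Imag Z_3,\,\Imag Z_4$, proving~(i). If $a=0$ and $\varepsilon=1$, then necessarily $b\neq 0$; taking real and imaginary parts of $2\,\Real\beta_4+i\,\delta\, b\,\bar\beta_2=0$ yields $\Real\beta_2=0$ and $2\,\Real\beta_4=-\delta\, b\,\Imag\beta_2$, which upon parametrizing by $\Imag\beta_2$ produces the generator $2\,\Imag Z_2+\delta\, b\,\Real Z_4$, equivalent (after multiplying by $\delta$) to $2\,\delta\,\Imag Z_2+b\,\Real Z_4$, giving~(ii). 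Finally, if $a=\varepsilon=0$, both equations collapse to $\Real\beta_4=0$ with $\beta_2$ free, producing the five generators of~(iii).

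The calculation is essentially linear algebra once the brackets are made explicit, so there is no real obstacle; the only care required is in the case $a=0$, $\varepsilon=1$, where one must separate real and imaginary parts of the condition $2\,\Real\beta_4+i\,\delta\, b\,\bar\beta_2=0$ and then check that the resulting one-parameter family of solutions for $(\beta_2,\Real\beta_4)$ indeed produces the stated generator $2\,\delta\,\Imag Z_2+b\,\Real Z_4$ of~$\frg_2$.
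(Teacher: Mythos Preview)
Your proof is correct and follows essentially the same approach as the paper: both compute the brackets $[Y,Z_k]$ from~\eqref{corchetes-FI}, deduce $\beta_1=0$ from $[Y,Z_4]$, and then reduce the membership condition $[Y,Z_1]\in\langle Z_4-\bar Z_4\rangle$ to the pair of scalar equations $a\,\bar\beta_2=0$ and $(\beta_4+\bar\beta_4)+i\,\delta\,\varepsilon\,b\,\bar\beta_2=0$, followed by the same three-case split. The only cosmetic difference is that in case~(ii) the paper parametrizes by $\Real\beta_4$ rather than by $\Imag\beta_2$, but the resulting generator is of course the same.
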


\begin{proof}
Let $X$ be a generic element in $\frg$ given by \eqref{vector-Z}.
Then, $X$ belongs to the term $\frg_2$ in the ascending central series if and only if
$[X, Z_k] \in\frg_1$, for every $1 \leq k \leq 4$.
Since $\mathfrak g_1=\langle \Imag Z_4\rangle$,
bearing in mind~\eqref{corchetes-FI} we get that $X\in\frg_2$ if and only if
$$
\alpha_1 = 0, \quad \  a\, \alpha_2 = 0, \quad \  \alpha_4+\bar{\alpha}_4 - i\,\delta\,\varepsilon\, b\,\alpha_2=0.
$$
In particular, one directly has $\Real Z_3,\, \Imag Z_3\in\frg_2$.
Now, to solve the previous system it suffices to distinguish the three different cases in the statement of the lemma. One gets the desired result simply substituting the corresponding solutions into~ \eqref{vector-Z}.
\end{proof}

\begin{proposition}\label{serie-familiaI}
Let $\frg$ be an $8$-dimensional NLA endowed with a complex structure $J$ in Family I with equations given in
Proposition~\ref{Prop-FI}. Then, the ascending type of $\frg$ is as follows:
\begin{itemize}
\item[(i)] if $a\neq 0$ and $(\varepsilon, \nu)
	\begin{cases}
	=(0,0), \text{ \ then \ }(\dim\frg_k)_k = (1,3,8);\\
	\neq (0,0), \text{ \ then \ }(\dim\frg_k)_k = (1,3,6,8);
	\end{cases}$

\smallskip
\item[(ii)] if $a=0$, $\varepsilon=1$ and $(\nu,b)
	\begin{cases}
	=(1,2\delta), \text{ \ then \ }(\dim\frg_k)_k = (1,4,8);\\
	\neq (1,2\delta), \text{ \ then \ }(\dim\frg_k)_k = (1,4,6,8);
	\end{cases}$

\smallskip
\item[(iii)] if $a=\varepsilon=0$ and $\nu=
	\begin{cases}
	0, \text{ \ then \ }(\dim\frg_k)_k = (1,5,8);\\
	1, \text{ \ then \ }(\dim\frg_k)_k = (1,5,6,8).
	\end{cases}$
\end{itemize}
%
\end{proposition}

\begin{proof}
A generic element $X$ given by \eqref{vector-Z}
belongs to the term $\frg_3$ in the ascending central series if and only if
$[X, Z_k] \in\frg_2$, for every $1 \leq k \leq 4$.
From~\eqref{corchetes-FI} it follows that this happens for $k=2,3$ and $4$, since
$\langle \Real Z_3,\, \Imag Z_3,\, \Imag Z_4 \rangle \subseteq \frg_2$
by Lemma~\ref{lema-g2}.
Hence, we must focus on the bracket $[X,Z_1]$, which can be
rewritten as
$$
[X, Z_1] = 2i\,\bar{\alpha}_1\left( \varepsilon\,\Imag Z_2 + \nu\,\Real Z_4 \right) + \Upsilon,
$$
for some $\Upsilon \in\frg_2$.
Therefore, $X$ will belong to $\frg_3$ depending on whether $\varepsilon\,\Imag Z_2 + \nu\,\Real Z_4 \in\mathfrak g_2$ or $\varepsilon\,\Imag Z_2 + \nu\,\Real Z_4 \notin\mathfrak g_2$. The analysis of these two cases leads to our result, bearing in mind the description of $\frg_2$ given in 
Lemma~\ref{lema-g2}.
\end{proof}

Combining the previous result with
Theorem~\ref{equivalencias-familiaI}, one obtains part \textrm{(i)} of Theorem~\ref{main-theorem}.

\subsection{Study of Family II}\label{reduc-fam-FII}	

In this section we arrive at the reduced equations~\eqref{FII-SnN} in
Theorem~\ref{main-theorem}, as well as at the classification of the complex structures in the Family II. Moreover, we study the ascending type of the $8$-dimensional nilpotent
Lie algebras admitting such complex structures, reaching Table~\ref{tab:valores-complejos2}.
Our starting point is Proposition~\ref{prop-resumen}.

\begin{lemma}\label{lema-previo-FII}
Let $J$ be a complex structure in Family II.
Then, there exists a basis of $(1,0)$-forms $\{\omega^k\}_{k=1}^4$ such that
\begin{equation}\label{ecusF2}
\begin{cases}
\begin{array}{ll}
d\omega^1 = 0, & \qquad  d\omega^3 = F\,\omega^{1\bar 1}  + C\,\omega^{12} + \bar C(\omega^{1\bar 2} -\omega^{2\bar1})-is\,(\omega^{24}+\omega^{2\bar 4}),\\
d\omega^2 = \omega^{14}+ \omega^{1\bar 4},& \qquad  d\omega^4 = L\,\omega^{1\bar 1}+s\,\omega^{2\bar 2}+i\,b\,(\omega^{1\bar 2}-\omega^{2\bar 1})
  +i\,(\omega^{1\bar 3}-\omega^{3\bar 1}),
\end{array}
\end{cases}
\end{equation}





\noindent where
the coefficients $C, F, L \in\mathbb C$ and $b, s\in\mathbb R$
are admissible; in particular, they satisfy 
$s\,\Imag L = 0$ and $(C, b, s)\neq (0,0,0)$.
\end{lemma}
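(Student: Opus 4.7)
The plan is to start from the equations~\eqref{ecusG} of Proposition~\ref{prop-resumen} under the standing assumption $B\neq 0$, and perform a short sequence of changes of $(1,0)$-basis that successively normalize the coefficients appearing in $d\omega^2$, $d\omega^3$ and $d\omega^4$, while bookkeeping which Jacobi identities from~\eqref{jacobi-general} survive. First I would use the two Jacobi relations that involve $B$ explicitly, namely $isB - N\bar H = 0$ and $AH - BG + \bar B D = 0$. Since $N\neq 0$ by~\eqref{jacobi-general}, the first one solves $\bar H = isB/N$, so $H$ is determined (up to conjugation) by $s$ and $B$; the second one then determines $G$ in terms of $A, B, D$ and $H$. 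These two algebraic relations are the reason why only the parameters $C, F, L, b, s$ (and nothing else) end up in~\eqref{ecusF2}.

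The next step is to kill $A$ in $d\omega^2$. A direct computation shows that the replacement $\tilde\omega^4 = \omega^4 + \frac{\bar A}{\bar B}\,\omega^1$ (keeping $\omega^1, \omega^2, \omega^3$ unchanged) removes the $\omega^{1\bar 1}$ term in $d\omega^2$ without altering the \emph{shape} of~\eqref{ecusG}; it simply reshuffles the other coefficients, which I would just rename. After this, $d\omega^2$ reads $-B(\omega^{14}-\omega^{1\bar 4})$. To reach $d\omega^2 = \omega^{14}+\omega^{1\bar 4}$, a purely imaginary rescaling of $\omega^4$ is needed to flip the relative sign between the $(2,0)$ and $(1,1)$ pieces. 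Concretely, I would set $\sigma^1 = e^{i\theta}\omega^1$, $\sigma^4 = i\mu\,\omega^4$ with $\mu\in\mathbb R^*$, and $\sigma^2 = \rho\,\omega^2$, and choose $\theta,\mu,\rho$ so that both coefficients become $+1$; this fixes $\mu = |B|$ and $\theta = \arg B + \pi/2$, and determines $\rho$.

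Once $d\omega^2$ has the desired form, I would handle $d\omega^3$ and $d\omega^4$. The coefficient $E$ of $\omega^{14}-\omega^{1\bar 4}$ in $d\omega^3$ can be removed by a change $\tilde\omega^3 = \omega^3 + \alpha\,\omega^2$ for a suitable $\alpha$, using that the new $d\omega^2 = \omega^{14}+\omega^{1\bar 4}$ feeds back only a symmetric combination. After $A=0$, the Jacobi identity $isA - N\bar C + \bar N D = 0$ becomes $N\bar C = \bar N D$, which, combined with the remaining freedom to rescale $\omega^3$ by a unit phase, forces $D=\bar C$; then the first Jacobi identity $\bar B D - BG + AH = 0$ (with $A=0$) yields $G = -\bar C$. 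For $d\omega^4$, the relation $\bar H = isB/N$ together with the normalization of $B$ yields a coefficient $H$ equal to $is$ (times a phase that has already been absorbed), giving the term $-is(\omega^{24}+\omega^{2\bar 4})$ in $d\omega^3$ and the term $s\,\omega^{2\bar 2}$ in $d\omega^4$; the coefficient $M$ appearing in $d\omega^4$ is then absorbed by a final shift $\omega^4 \mapsto \omega^4 + \gamma\,\omega^1 + \delta\,\omega^2$ together with a shift in $\omega^3$, leaving only the real coefficient $b$ multiplying $i(\omega^{1\bar 2}-\omega^{2\bar 1})$.

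The main obstacle is keeping track of how the constraint $H\,\Real L = 0$ from~\eqref{jacobi-general} transforms through the sequence of changes of basis. Because the rescaling in the second step is purely imaginary on $\omega^4$, it interchanges the real and imaginary parts of $L$; combined with $H = -is\bar B/\bar N$, this converts $H\,\Real L = 0$ into the asserted $s\,\Imag L = 0$ for the new $L$. Finally, the nondegeneracy $(C, b, s)\neq (0,0,0)$ in~\eqref{ecusF2} is inherited from $(C,D,G,H,M,s)\neq 0$ in~\eqref{jacobi-general} via the identifications $D=\bar C$, $G=-\bar C$, $H\propto s$, and the absorption of $M$; this is what needs to be verified last to close the argument.
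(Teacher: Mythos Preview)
Your overall strategy matches the paper's: start from~\eqref{ecusG} with $B\neq 0$, then perform a change of $(1,0)$-basis that kills $A$, normalizes $d\omega^2$, removes $E$, and uses the Jacobi relations~\eqref{jacobi-general} to identify $D=\bar C$, $G=-\bar C$, $H=is$. The paper does all of this in a single explicit change
\[
\tau^1=\omega^1,\quad \tau^2=-\tfrac{i}{B}\omega^2,\quad \tau^3=\bar N\omega^3-\tfrac{E\bar N}{B}\omega^2,\quad \tau^4=i\,\omega^4+i\,\tfrac{\bar A}{\bar B}\omega^1,
\]
and then verifies the identities $D_\tau=\bar C_\tau$, $G_\tau=-D_\tau$, $H_\tau=is_\tau$ by direct computation from~\eqref{jacobi-general}. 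Your sequential description is in the same spirit, but two steps are not correct as stated.

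First, a \emph{unit phase} rescaling of $\omega^3$ alone will not simultaneously force $D=\bar C$ and normalize the $\omega^{1\bar 3}$-coefficient in $d\omega^4$ to~$i$; in the paper it is the rescaling of $\omega^3$ by $\bar N$ together with the factor $i$ on $\omega^4$ that achieves both. Second, and more seriously, your handling of $M$ is wrong. A shift $\omega^4\mapsto\omega^4+\gamma\omega^1+\delta\omega^2$ adds $\delta(\omega^{14}+\omega^{1\bar 4})$ to $d\omega^4$ (terms not present in~\eqref{ecusF2}) and does not touch the $\omega^{1\bar 2}$-coefficient at all; a shift $\omega^3\mapsto\omega^3+\beta\omega^2$ does modify $M$ but simultaneously reintroduces the $E$-type terms $\beta(\omega^{14}+\omega^{1\bar 4})$ in $d\omega^3$ that you have just removed. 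So $M$ cannot be ``absorbed'' by such shifts. The point the paper uses instead is that after the change above one has $M_\tau=M\bar B+N\bar E$, and the Jacobi relation $\Real(M\bar B+N\bar E)=0$ from~\eqref{jacobi-general} forces $M_\tau\in i\,\mathbb R$ \emph{automatically}; writing $M_\tau=ib$ with $b\in\mathbb R$ then gives the form in~\eqref{ecusF2} with no further basis change. This is also what makes your final paragraph work: the condition $(C,b,s)\neq(0,0,0)$ does follow from $(C,D,G,H,M,s)\neq 0$, but via $M_\tau=ib$ coming from Jacobi, not via any absorption.
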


\begin{proof}
It follows from Proposition~\ref{prop-resumen} that $J$ admits complex structure equations of the form \eqref{ecusG} with 
$B\neq 0$. By \eqref{jacobi-general} we also have $N\neq 0$, so one can define 
the $(1,0)$-basis
$$
\tau^1 = \omega^1,\quad\
\tau^2 = -\frac{i}{B}\omega^2,\quad\
\tau^3 = \bar N\omega^3-\frac{E\,\bar N}{B}\,\omega^2,\quad\
\tau^4 = i\,\omega^4 + i\,\frac{\bar A}{\bar B}\,\omega^1.
$$
With respect to $\{\tau^k\}_{k=1}^4$, the complex structure equations~\eqref{ecusG} in
Proposition~\ref{prop-resumen} become
$$
\begin{cases}
\begin{array}{ll}
d\tau^1 = 0, & \qquad  d\tau^3 = F_\tau\,\tau^{1\bar 1}  + C_\tau\,\tau^{12} +D_\tau\,\tau^{1\bar 2} +G_\tau\,\tau^{2\bar1}-H_\tau\,(\tau^{24}+\tau^{2\bar 4}),\\
d\tau^2 = \tau^{14}+ \tau^{1\bar 4},& \qquad  d\tau^4 = L_\tau\,\tau^{1\bar 1}+s_\tau\,\tau^{2\bar 2}+(M_\tau\,\tau^{1\bar 2}+\bar M_\tau\,\tau^{2\bar 1}) +i\,(\tau^{1\bar 3}-\tau^{3\bar 1}),
\end{array}
\end{cases}
$$



%

\noindent where the coefficients $F_\tau$, $C_\tau$, $D_\tau$, $G_\tau$, $H_\tau$, $L_\tau$, $M_\tau\in\mathbb C$ and
$s_\tau\in\mathbb R$ are expressed in terms of the original ones as follows:
$$
\begin{array}{lll}
    C_\tau = \frac{i\,B\,\bar N}{\bar B}\,\big( C\bar B - H\bar A\big),
    &\quad
    D_\tau = - i\,\bar B\,\bar ND,     &\quad
   G_\tau = i\,\bar N\big(BG-AH\big),\\[6pt]
    H_\tau = BH\bar N, &\quad  M_\tau = M\bar B + N\bar E, &\quad s_\tau = -s\,|B|^2.
\end{array}
$$
Note that, in order to get the result, it suffices to check that
$D_\tau = - G_\tau =\widebar{C}_\tau$, $H_\tau=i\,s_\tau$, and $M_\tau\in i\,\mathbb R$. This can be done using the conditions in~\eqref{jacobi-general}.
\end{proof}

\begin{lemma}\label{lema-FII-reducida-unif}
Let $J$ be a complex structure in Family II.
Then, there exists a basis of $(1,0)$-forms $\{\omega^k\}_{k=1}^4$ 
satisfying \eqref{ecusF2} 
with $C=\varepsilon$, $L=i\,\nu$ and $s=-\mu$, where 
$\varepsilon,\mu,\nu\in\{0,1\}$ such that $\mu\,\nu=0$ and $(\varepsilon,\mu,b)\neq(0,0,0)$. 
\end{lemma}

\begin{proof}
Since $J$ is in Family II, by Lemma~\ref{lema-previo-FII} there is a basis
$\{\omega^k\}_{k=1}^4$ satisfying the equations~\eqref{ecusF2}.
Hence, writing $C=|C|\,e^{i\,\beta}$ for some $\beta\in [0,2\,\pi)$, one can define
a new $(1,0)$-basis as follows:
$$
\tau^1=\lambda\,\omega^1, \,\,
\tau^2=\lambda\,\omega^2, \,\,
\tau^3=\frac{1}{\bar\lambda}\left( \omega^3 + \frac{i\,\Real L}{2}\,\omega^1 \right), \,\,
\tau^4=\omega^4,\quad\  \text{where}\ \ 
\lambda=
\begin{cases} 1,\qquad \qquad \, \text{ if } C=0,\\ |C|^{\nicefrac13}\,e^{i\,\beta},\quad \text{if }C\neq 0.
\end{cases}
$$
In terms of $\{\tau^k\}_{k=1}^4$ we obtain equations of the form~\eqref{ecusF2} with new coefficients
$C_{\tau}\in\{0,1\},\, F_\tau\in\mathbb C$, $L_\tau=i\,\nu_{\tau}$ for some $\nu_{\tau} \in \mathbb R$, and $b_\tau$, $s_\tau\in\mathbb R$. 
We rename $C_\tau=\varepsilon_\tau$ and $s_\tau = -\mu_\tau$.
Note that by Lemma~\ref{lema-previo-FII} we have $\mu_\tau\,\nu_\tau=0$ and $(\varepsilon_\tau, b_\tau, s_\tau)\neq (0,0,0)$.

Now, we focus our attention on the fact that the parameters $\mu_\tau,\nu_\tau\in\mathbb R$ 
satisfy the condition $\mu_\tau\,\nu_\tau=0$. Thus, we consider the following cases:

\noindent $\bullet$ Let us suppose $\mu_\tau=0$. If $\nu_\tau\neq 0$, then we can define the $(1,0)$-basis 
$\sigma^{\,1} = \tau^1$, $\sigma^{k} =\frac{1}{\nu_\tau}\,\tau^k$, for $k=2,3,4$, to get similar equations but with the normalized coefficient $\nu_\sigma=1$. 

\noindent $\bullet$ If $\mu_\tau\neq 0$, then $\nu_\tau=0$ and we consider
$\sigma^{\,1} = \tau^1$, $\sigma^{k} = \mu_\tau\,\tau^k$ for $k=2,3,4$. 
We arrive at similar equations but with the normalized coefficient $\mu_\sigma=1$.

Finally, renaming the basis and the coefficients, the lemma is proved. 
\end{proof}

Using the previous lemma, in the following result we arrive at the desired reduced structure equations~\eqref{FII-SnN}
of Theorem~\ref{main-theorem} for complex structures in the Family II.

\begin{proposition}\label{Prop-FII}
Every $8$-dimensional nilpotent Lie algebra $\frg$ endowed with a complex structure $J$
in Family II admits a basis of $(1,0)$-forms satisfying the structure equations
\begin{equation*}
\begin{cases}
\begin{array}{ll}
d\omega^1 = 0, & \qquad  d\omega^3 = a\,\omega^{1\bar 1}
 +\varepsilon\,(\omega^{12} + \omega^{1\bar 2} - \omega^{2\bar1}) + i\,\mu\,(\omega^{24} + \omega^{2\bar 4}),\\
d\omega^2 = \omega^{14}+ \omega^{1\bar 4},& \qquad  d\omega^4 = i\,\nu\,\omega^{1\bar 1} - \mu\,\omega^{2\bar 2} + i\,b\,(\omega^{1\bar 2} - \omega^{2\bar 1})
  + i\,(\omega^{1\bar 3} - \omega^{3\bar 1}),
\end{array}
\end{cases}
\end{equation*}


%
%

\noindent where $\varepsilon,\,\mu,\,\nu\in\{0,1\}$ such that $\mu\nu=0$, $(\varepsilon,\mu)\neq (0,0)$, and $a,b\in\mathbb R$.
\end{proposition}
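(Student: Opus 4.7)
The plan is to take the equations of Lemma~\ref{lema-FII-reducida1}, normalize $(s,\nu)$ via Lemma~\ref{lema-FII-reducida2} and relabel $\mu = -s \in \{0,1\}$, and then carry out two further reductions. After the preliminary normalization, the equations already have exactly the shape claimed in the proposition, the only discrepancies being that the coefficient $F$ is complex (rather than the real parameter $a$) and that the constraint on the coefficients is $(\varepsilon, b, \mu) \neq (0,0,0)$ inherited from Lemma~\ref{lema-FII-reducida1}, rather than $(\varepsilon,\mu)\neq(0,0)$ as required in the statement.

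To reduce $F$ to a real number when $\varepsilon = 1$, I would perform the change of basis $\eta^1 = \omega^1$, $\eta^2 = \omega^2 + \sigma\,\omega^1$, $\eta^3 = \omega^3 + i\mu\sigma\,\omega^2 + \gamma\,\omega^1$, $\eta^4 = \omega^4$, with $\sigma,\gamma \in \mathbb{C}$ to be determined. The coefficient $i\mu\sigma$ of $\omega^2$ in $\eta^3$ is exactly what is needed to absorb the spurious $(\eta^{14}+\eta^{1\bar 4})$-terms that the shift of $\omega^2$ produces in $d\eta^3$ when $\mu = 1$. A direct computation then shows that the new coefficient of $\eta^{1\bar 1}$ in $d\eta^3$ equals $F + (\sigma - \bar\sigma)$, so the choice $\Imag\sigma = -\Imag F/2$ gives $a = \Real F \in \mathbb{R}$. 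The real correction that this shift introduces in the $\eta^{1\bar 1}$-coefficient of $d\eta^4$ is then cancelled by a suitable choice of $\Imag\gamma$, and the coefficient $b$ is only shifted by the real quantity $2\mu\Imag\sigma$, so stays real. In the case $\varepsilon = 0$, $\mu = 1$ (hence $\nu = 0$), the previous argument fails since there is no $\omega^{12}$-term to exploit. I would instead use a pure scaling $\eta^k = \lambda_k\omega^k$: preservation of the shape of $d\eta^2$, $d\eta^3$ and $d\eta^4$ forces $\lambda_4 \in \mathbb{R}^*$, $\lambda_2 = \lambda_1\lambda_4$, $\lambda_3 = \lambda_1\lambda_4^2$ and $|\lambda_1|^2\lambda_4 = 1$, leaving only the phase of $\lambda_1$ free; the new coefficient of $\eta^{1\bar 1}$ in $d\eta^3$ becomes $\lambda_4^2 F/\bar\lambda_1$, which is made real by choosing $\arg\lambda_1 = -\arg F$.

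It remains to exclude the case $(\varepsilon, \mu) = (0,0)$. In that situation Lemma~\ref{lema-FII-reducida1} forces $b \neq 0$. Translating the structure equations into Lie brackets via $d\alpha(X,Y) = -\alpha([X,Y])$, in the spirit of Section~\ref{acs-FI}, one verifies that $\Imag Z_4$, $\Real Z_2 - b\Real Z_3$ and $\Imag Z_2 - b\Imag Z_3$ all lie in the center of $\frg$; hence $\dim Z(\frg) \geq 3$, contradicting the standing hypothesis $\dim \frg_1 = 1$. This final exclusion is the step I expect to be most delicate, as it rests not on an ad hoc change of basis but on a careful enumeration of the relevant brackets obtained from the structure equations in order to exhibit the extra central elements.
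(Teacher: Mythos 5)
Your proposal is correct and follows essentially the same route as the paper: starting from Lemmas~\ref{lema-FII-reducida1} and~\ref{lema-FII-reducida2}, splitting on $\varepsilon$, using a shift $\omega^2\mapsto\omega^2+\sigma\,\omega^1$ with $\Imag\sigma=-\Imag F/2$ (together with the compensating $i\mu\sigma\,\omega^2$- and $\gamma\,\omega^1$-corrections in $\omega^3$) when $\varepsilon=1$, a rotation/rescaling when $\varepsilon=0$, and a center-dimension argument to exclude $(\varepsilon,\mu)=(0,0)$. The paper's changes of basis are exactly the special cases $\sigma=-i\,\Imag F/2$ and $\lambda_1=\lambda_2=\lambda_3=e^{-i\arg F}$, $\lambda_4=1$ of yours, and your explicit exhibition of the central elements $\Imag Z_4$, $\Real Z_2-b\,\Real Z_3$, $\Imag Z_2-b\,\Imag Z_3$ correctly fills in the step the paper dismisses as ``easy to see''.
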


\begin{proof}
By Lemma~\ref{lema-FII-reducida-unif}, 
it suffices to see that the coefficient $F$ in \eqref{ecusF2} can be chosen to be a real number.
Two cases are distinguished depending on $\varepsilon$:
\vskip.1cm

\noindent $\bullet$ If $\varepsilon=0$, it suffices to write $F=|F|\,e^{i\,\alpha}$, for some $\alpha\in[0,2\,\pi)$, and apply the change of basis defined by
$\tau^{k}=e^{-i\,\alpha}\,\omega^k$, for $k=1,2,3$, and $\tau^{\,4}=\omega^4$.

\vskip.1cm

\noindent $\bullet$ For $\varepsilon=1$, the result follows by considering the new $(1,0)$-basis defined by
$$
\tau^{1}=\omega^1, \quad
\tau^{\,2}=\omega^2 - \frac{i\,\Imag F}{2}\,\omega^1,\quad
\tau^{\,3}=\omega^3 + \frac{\mu\,\Imag F}{2}\,\omega^2 + \frac{i\,\Imag F\,(4\,b-3\,\mu\,\Imag F)}{8}\,\omega^1,\quad
\tau^{4}=\omega^4.
$$

Finally, $(\varepsilon, \mu)\neq(0,0)$ because otherwise the dimension
of the center of $\frg$ would be greater than 1. 
\end{proof}

After having reduced the complex structure equations of the Family~II,
we next 
study their equivalences in terms of the different parameters involved in the equations.
Let $J$ and $J'$ be two complex structures 
in Family II on an NLA $\frg$. Let $\{\omega^{k}\}_{k=1}^4$ and $\{\omega'^{\,k}\}_{k=1}^4$ be bases for 
$\frg_{J}^{1,0}$ and $\frg_{J'}^{1,0}$ 
satisfying structure equations as in Proposition~\ref{Prop-FII} with
parameters $(\varepsilon,\mu,\nu, a,b)$ and $(\varepsilon',\mu',\nu',a',b')$, respectively.
Any equivalence $F$ between $J$ and $J'$ is defined by \eqref{def-F-complejo}--\eqref{equivalence}.
Similarly to Lemma~\ref{lema-previo-equiv-FI}, for Family~II the isomorphism $F$ can be simplified as follows:

\begin{lemma}\label{lema-previo-equiv-FII}
The forms $F(\omega'^{\,i})\in\frg^{1,0}_J$ satisfy the conditions:
$$F(\omega'^{\,1})\wedge\omega^1=0, \quad
    F(\omega'^{\,2})\wedge\omega^{12}=0, \quad
    F(\omega'^{\,3})\wedge\omega^{123}=0, \quad
    F(\omega'^{\,4})\wedge\omega^{4}=0.$$
In particular, the matrix $\Lambda=(\lambda^i_j)_{1\leq i,j\leq 4}$ that defines $F$ is triangular
and thus
$$\Pi_{i=1}^4\lambda^i_i=\det\Lambda\neq 0.$$
\end{lemma}
\begin{proof}
The result comes straightforward by imposing $F(d\omega'^{\,i}) = d(F(\omega'^{\,i}))$ for $i=1,2,3$, taking into account that  $(\varepsilon,\,\mu) \neq (0,0)$ and $\text{det}\,\Lambda\neq 0$.
\end{proof}

Moreover, in a similar way to Proposition~\ref{equiv-param-FI} one finds a first relation between the tuples $(\varepsilon,\mu,\nu, a,b)$ and $(\varepsilon',\mu',\nu',a',b')$:

\begin{lemma}\label{lema-pre}
In the conditions above, if there exists an equivalence between $J$ and $J'$, then
$$
\varepsilon = \varepsilon',\quad \mu=\mu',\quad \nu=\nu'.$$
\end{lemma}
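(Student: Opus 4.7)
The plan is to exploit the compatibility condition $d\circ F^* = F^* \circ d$ entry by entry, using the triangular structure of $\Lambda$ furnished by Lemma~\ref{lema-previo-equiv-FII}. By that lemma we may write
\[
F(\omega'^{\,1})=\lambda^1_1\omega^1,\ \ F(\omega'^{\,2})=\lambda^2_1\omega^1+\lambda^2_2\omega^2,\ \ F(\omega'^{\,3})=\sum_{j=1}^{3}\lambda^3_j\omega^j,\ \ F(\omega'^{\,4})=\lambda^4_4\omega^4,
\]
and all diagonal entries $\lambda^1_1,\lambda^2_2,\lambda^3_3,\lambda^4_4$ are non-zero because $\det\Lambda\neq 0$.

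First I would apply $d(F(\omega'^{\,2}))=F(d\omega'^{\,2})$. Since $d\omega'^{\,2}=\omega'^{14}+\omega'^{1\bar 4}$, the right-hand side is $\lambda^1_1(\lambda^4_4\omega^{14}+\bar\lambda^4_4\omega^{1\bar 4})$, while the left-hand side is $\lambda^2_2(\omega^{14}+\omega^{1\bar 4})$. Comparing coefficients of $\omega^{14}$ and $\omega^{1\bar 4}$ yields $\lambda^4_4\in\mathbb R$ and $\lambda^2_2=\lambda^1_1\lambda^4_4$; set $\lambda^4_4=\lambda\in\mathbb R^*$.

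Next I would use $d(F(\omega'^{\,3}))=F(d\omega'^{\,3})$, using the explicit formulas for $F(\omega'^{jk})$, $F(\omega'^{j\bar k})$. The crucial two coefficients are:
\begin{itemize}
\item the coefficient of $\omega^{12}$, which gives $\lambda^3_3\,\varepsilon=\varepsilon'\,\lambda^1_1\lambda^2_2$;
\item the coefficient of $\omega^{24}$ (equivalently $\omega^{2\bar 4}$), which gives $\mu\,\lambda^3_3=\mu'\,\lambda^2_2\lambda$.
\end{itemize}
Since $\lambda^1_1,\lambda^2_2,\lambda^3_3,\lambda\neq 0$, each identity forces $\varepsilon=0\Leftrightarrow\varepsilon'=0$ and $\mu=0\Leftrightarrow\mu'=0$; as all four parameters lie in $\{0,1\}$, we conclude $\varepsilon=\varepsilon'$ and $\mu=\mu'$.

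Finally I would handle $\nu=\nu'$ via $d(F(\omega'^{\,4}))=F(d\omega'^{\,4})$. The coefficient of $\omega^{1\bar 3}$ gives $\lambda=\lambda^1_1\bar\lambda^3_3$, which in particular shows $|\lambda^1_1|\neq 0$. Then isolating the imaginary part of the coefficient of $\omega^{1\bar 1}$ (the real part is polluted by $b'$, $\mu'$ and by $\Imag(\lambda^1_1\bar\lambda^2_1)$, $\Imag(\lambda^1_1\bar\lambda^3_1)$, but the imaginary part is clean) produces $\nu\,\lambda=\nu'\,|\lambda^1_1|^2$, which again forces $\nu=\nu'$. The main obstacle is a purely bookkeeping one: one must carefully expand each $F(\omega'^{j\bar k})$ and keep track of which bidegree components contain extra parameters ($a',b',\Real\lambda^1_1\bar\lambda^2_1$, etc.), so that the three clean coefficients $\omega^{12}$, $\omega^{24}$, and $\Imag\omega^{1\bar 1}$ are identified as the ones isolating the discrete invariants $\varepsilon,\mu,\nu$.
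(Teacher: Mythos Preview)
Your proof is correct and follows essentially the same route as the paper's own argument: you use the triangular form of $\Lambda$ from Lemma~\ref{lema-previo-equiv-FII}, extract $\lambda^4_4\in\mathbb R$ and $\lambda^2_2=\lambda^1_1\lambda^4_4$ from $d(F(\omega'^{\,2}))=F(d\omega'^{\,2})$, then read off $\varepsilon=\varepsilon'$ and $\mu=\mu'$ from the $\omega^{12}$ and $\omega^{24}$ coefficients of $d(F(\omega'^{\,3}))-F(d\omega'^{\,3})$, and finally $\nu=\nu'$ from the imaginary part of the $\omega^{1\bar 1}$ coefficient of $d(F(\omega'^{\,4}))-F(d\omega'^{\,4})$. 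The only cosmetic difference is that you invoke the $\omega^{1\bar 3}$ coefficient to record $\lambda=\lambda^1_1\bar\lambda^3_3$ before the last step, whereas the paper postpones that identity to the subsequent proposition; this is harmless and not needed for the conclusion $\nu=\nu'$, since $\lambda^1_1\neq 0$ was already guaranteed by $\det\Lambda\neq 0$.
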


As a consequence, we can  focus on the relations between the pairs $(a,b)$ and $(a', b')$ to study the equivalences between the complex structures $J$ and $J'$. In fact, applying similar techniques as in the proof of Proposition~\ref{equiv-a-b-FI} one obtains the following result:

\begin{proposition}\label{suma1-FII}
Suppose that the complex structures $J$ and $J'$ are equivalent. We have:
\begin{enumerate}
\item[(i)] if $(\varepsilon,\mu,\nu)=(1,0,0)$, then $a' = \lambda\, a$ and $b'=b$, 
with $\lambda\in\mathbb R^*$;
\item[(ii)] if $(\varepsilon,\mu,\nu)=(0,1,0)$, then $a'=\nicefrac{a}{\kappa^5}$ and 
$b'= \nicefrac{1}{\kappa^2}\,\big( b + 2\,\kappa\,\Imag \lambda^2_1 \big)$, with $\kappa\in\mathbb R^*$;
\item[(iii)] otherwise, $a'=a$ and $b'=b$.
\end{enumerate}
Furthermore, any equivalence~$F$ between $J$ and $J'$ defined by~\eqref{def-F-complejo}--\eqref{equivalence} satisfies
Lemma~\ref{lema-previo-equiv-FII} together with
\begin{equation*}
\lambda^4_4=\lambda\in\mathbb R^*, \qquad
\lambda^2_2=\lambda\,\lambda^1_1, \qquad
\lambda^3_3=\frac{\lambda}{\bar\lambda^1_1}, \qquad
\lambda^3_2=i\,\mu\,\lambda\,\lambda^2_1,
\end{equation*}
and \
$\begin{cases}
\lambda^1_1=1, \quad \mathfrak{Im}\lambda^2_1=\mathfrak{Im}\lambda^3_1=0,
	\quad\text{if }(\varepsilon,\mu,\nu)=(1,0,0);\\[6pt]
\lambda^1_1=\kappa\in\mathbb R^*, \quad \lambda=\frac{1}{\kappa^2}, \quad
	\mathfrak{Im}\lambda^3_1=\frac{1}{\kappa}
		\left( \frac{1}{2}\,|\lambda^2_1|^2 - \frac{b}{\kappa}\,\mathfrak{Im}\lambda^2_1
			- 2\,(\mathfrak{Im}\lambda^2_1)^2\right),
	\quad\text{if }(\varepsilon,\mu,\nu)=(0,1,0).
\end{cases}$
 \end{proposition}

Finally, we arrive at the main result about equivalences of complex structures in Family II:

\begin{theorem}\label{equivalencias-familiaII}
Up to equivalence, the complex structures 
in Proposition~\ref{Prop-FII} are classified as follows:
$$\begin{array}{rlrl}
{\rm (i)} & (\varepsilon,\mu,\nu,a,b) = (1, 1, 0, a, b);
	&\qquad {\rm (iii)} & (\varepsilon,\mu,\nu,a,b) = (1, 0, 0, 0, b), (1, 0, 0, 1, b);\\
{\rm (ii)}& (\varepsilon,\mu,\nu,a,b) = (1, 0, 1, a, b);
	& \qquad {\rm (iv)} & (\varepsilon,\mu,\nu,a,b) = (0, 1, 0, 0, 0), (0, 1, 0, 1, 0).
\end{array}$$

%
%
%
\end{theorem}

\begin{proof}
We first observe that parts \textrm{(i)} and \textrm{(ii)} of the theorem come straightforward
from Proposition~\ref{suma1-FII}. Moreover, this proposition also gives us
the relation between the pairs $(a,b)$ and $(a',b')$ whenever there is an equivalence
between the complex structures $J$ and $J'$.

For $(\varepsilon,\mu,\nu)=(1, 0, 0)$, observe that one has
$a' = \lambda\,a$ and $b'=b$ with $\lambda\in\mathbb R^*$.
It suffices to set the values $\lambda^2_1=\lambda^3_1=0$ and, either 
$\lambda=\frac1a$ when $ a\neq 0$, or $\lambda=1$ when $a=0$, in order to obtain~\textrm{(iii)}.

For $(\varepsilon,\mu,\nu)=(0, 1, 0)$, we have
$a'=\frac{a}{\kappa^5}$ and 
$b'= \frac{1}{\kappa^2}\,\big( b + 2\,\kappa\,\Imag \lambda^2_1 \big)$,
with $\kappa\in\mathbb R^*$. 
Hence, one can take $\lambda^2_1 = -\frac{i\,b}{2\kappa}$, 
$\lambda_1^3 =\frac{i\,b^2}{8\,\kappa^3}$ and $(\lambda,\kappa) = (1,1)$ 
when $a=0$, or $(\lambda,\kappa) = (a^{-\nicefrac25}, a^{\nicefrac15})$ otherwise, 
so that we get $b'=0$ and $a'\in\{0,1\}$. This gives (iv) and completes the proof of the theorem.
\end{proof}

Finally, one can explicitly compute the ascending central series of the nilpotent Lie algebras underlying Family II from the equations given in Proposition~\ref{Prop-FII}. The ideas behind the proof are similar to those applied to Family I, so we omit the argument here.

\begin{proposition}\label{serie-familiaII}
Let $\frg$ be an $8$-dimensional NLA endowed with a complex structure $J$ in Family II with equations given in
Proposition~\ref{Prop-FII}. Then, the ascending type of $\frg$ is one of the following:
\begin{itemize}
\item[(i)] $(\dim \frg_k)_k = (1,3,5,8)$ if and only if $\nu =0$;
\item[(ii)] $(\dim \frg_k)_k = (1,3,5,6,8)$ if and only if $\nu =1$.
\end{itemize}
\end{proposition}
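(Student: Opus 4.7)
The plan is to follow the same scheme as the proof of Proposition~\ref{serie-familiaI}. Using the structure equations in Proposition~\ref{Prop-FII} and the formula $d\alpha(X,Y)=-\alpha([X,Y])$, one first writes down the four brackets $[X,Z_k]$, $k=1,2,3,4$, in terms of a generic element $X=\sum_{i=1}^4(\alpha_i Z_i+\bar\alpha_i\bar Z_i)$. The key features that drive the rest of the argument are: $[X,Z_3]=-i\bar\alpha_1(Z_4-\bar Z_4)$; $[X,Z_4]=-\alpha_1 Z_2-\bar\alpha_1\bar Z_2-i\mu(\alpha_2 Z_3-\bar\alpha_2\bar Z_3)$; and the fact that $[X,Z_1]$ and $[X,Z_2]$ contain no $Z_4,\bar Z_4$ contributions other than multiples of $Z_4-\bar Z_4$, except for the single term $i\nu\bar\alpha_1(Z_4+\bar Z_4)$ that appears inside $[X,Z_1]$. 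Since $[X,Z_4-\bar Z_4]=0$ for every $X\in\frg$, Corollary~\ref{cor-1-dim-center} gives immediately $\frg_1=\langle\Imag Z_4\rangle$.

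For $\frg_2$ I would impose $[X,Z_k]\in\frg_1^{\mathbb C}=\langle Z_4-\bar Z_4\rangle_{\mathbb C}$. The $Z_2$-coefficient of $[X,Z_4]$ forces $\alpha_1=0$; the combined $\bar Z_3$-coefficients of $[X,Z_1]$ and $[X,Z_4]$ give $\varepsilon\alpha_2=\mu\alpha_2=0$, and since Proposition~\ref{Prop-FII} ensures $(\varepsilon,\mu)\neq(0,0)$ this forces $\alpha_2=0$; finally, the $Z_2$-coefficient of $[X,Z_1]$ yields $\Real\alpha_4=0$. This produces $\frg_2=\langle\Real Z_3,\Imag Z_3,\Imag Z_4\rangle$, of real dimension~$3$. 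An analogous calculation for $\frg_3$, requiring $[X,Z_k]\in\frg_2^{\mathbb C}$, leaves only the same constraints $\alpha_1=0$ and $\Real\alpha_4=0$, so $\frg_3=\langle\Real Z_2,\Imag Z_2,\Real Z_3,\Imag Z_3,\Imag Z_4\rangle$ has real dimension~$5$.

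The decisive step is $\frg_4$, where the case distinction in the statement appears. The only bracket with a component outside $\frg_3^{\mathbb C}$ (namely, a nontrivial $\Real Z_4$-component) is $[X,Z_1]$, and that component equals exactly $2i\nu\bar\alpha_1\,\Real Z_4$. If $\nu=0$ the condition is vacuous, so $\frg_4=\frg$ and the ascending type is $(1,3,5,8)$. If $\nu=1$, one must set $\alpha_1=0$, obtaining the six-dimensional subspace $\frg_4=\langle\Real Z_j,\Imag Z_j\mid j=2,3,4\rangle$; a direct inspection of the four bracket formulas then shows that no constraint survives at the next step, so $\frg_5=\frg$ and the ascending type is $(1,3,5,6,8)$. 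The only delicate point of the whole argument is the simultaneous use of the two parameters $\varepsilon$ and $\mu$ at the $\frg_2$-step, which is precisely where the hypothesis $(\varepsilon,\mu)\neq(0,0)$ must be invoked to conclude $\alpha_2=0$; everything else reduces to a mechanical reading of the bracket formulas.
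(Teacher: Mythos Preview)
Your proposal is correct and follows essentially the same approach as the paper: compute the brackets $[X,Z_k]$ from the structure equations, read off $\frg_1=\langle\Imag Z_4\rangle$, then successively determine $\frg_2$, $\frg_3$, $\frg_4$ by the same coefficient-matching, invoking $(\varepsilon,\mu)\neq(0,0)$ at the $\frg_2$-step and the $\nu$-dependent $\Real Z_4$-term at the $\frg_4$-step. The paper's argument is organized identically, with the same conditions $\alpha_1=0$, $\varepsilon\alpha_2=\mu\alpha_2=0$, $\alpha_4+\bar\alpha_4=0$ at each stage.
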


The combination of this result and Theorem~\ref{equivalencias-familiaII}
gives part \textrm{(ii)} of Theorem~\ref{main-theorem}.

It is worth noting that Propositions~\ref{serie-familiaI} and~\ref{serie-familiaII} show that the NLAs
underlying Families~I and~II do not share the same ascending type. This provides a stronger result than the partition of SnN complex structures 
into the families I and II proved in Proposition~\ref{dolb-0-1}.




\section{Classification of nilpotent Lie algebras with SnN complex structures}\label{clasif-real-FamI-II}

\noindent In this section we classify the $8$-dimensional NLAs admitting an SnN complex structure $J$. 
The study is divided into two parts depending on the family to which $J$ belongs 
(see Section~\ref{clasif-real-FamI} for $J$ in Family I and Section~\ref{clasif-real-FamII} for $J$ in Family II).
As a consequence, Theorem~\ref{intro-main-theorem-2} is proved.

%
%
\subsection{Classification of NLAs underlying Family I}\label{clasif-real-FamI}
The goal of this section is to prove that the non-isomorphic real Lie algebras underlying Family~I are those in Theorem~\ref{intro-main-theorem-2} denoted by
\begin{equation}\label{algebras-FI}
\mathfrak g_1^{\gamma}, \
\mathfrak g_2^{\alpha}, \
\mathfrak g_3^{\gamma}, \
\mathfrak g_4^{\alpha,\,\beta}, \
\mathfrak g_5, \
\mathfrak g_6, \
\mathfrak g_7, \
\mathfrak g_8,
\end{equation}
where $\gamma\in\{0,1\}$, $\alpha\in\mathbb R$, and $(\alpha, \beta)\in\mathbb R^*\times \mathbb R^{+}$ or $\mathbb R^{+}\times \{0\}$. Moreover, their ascending types are listed in the first column of Table~\ref{tab:cambios-FI} below.

\begin{notation}\label{notacion-lista-NLAs}
For the description of the structure equations of the Lie algebras in Theorem~\ref{intro-main-theorem-2} we are using the following abbreviated notation. For instance,
$$
\mathfrak g_1^{\gamma} = (0^5,\, 13+15+24,\, 14-23+25,\, 16+27+\gamma\cdot 34)
$$
means
that there is a basis $\{e^j\}_{j=1}^8$ of the dual $(\mathfrak g_1^{\gamma})^*$
of the nilpotent Lie algebra $\mathfrak g_1^{\gamma}$ satisfying $de^1 =\cdots = de^5 = 0$,
$de^6 = e^{1}\wedge e^{3} + e^{1}\wedge e^{5} + e^{2}\wedge e^{4}$,
$de^7 = e^{1}\wedge e^{4} - e^{2}\wedge e^{3} + e^{2}\wedge e^{5}$,
and
$de^8 = e^{1}\wedge e^{6} + e^{2}\wedge e^{7} + \gamma\, e^{3}\wedge e^{4}$.
\end{notation}

Let us recall that any $8$-dimensional NLA $\frg$ with complex structure $J$  in Family I is 
given by the complex structure equations~\eqref{FI-SnN} with parameters in 
Table~\ref{tab:valores-complejos}. For each tuple $(\varepsilon,\,\nu,\,a,\,b)$, it suffices to define a real basis $\{e^i\}_{i=1}^8$ following the third column in Table~\ref{tab:cambios-FI} to get the real Lie algebras above.

\begin{table}[p]
\centering
\renewcommand{\arraystretch}{1.4}
\renewcommand{\tabcolsep}{2pt}
\begin{tabular}{|c|c|l|c|}
\hline
 Ascending type & $(\varepsilon,\nu,a,b)$ & \ Real basis $\{e^k\}_{k=1}^8$ & NLA \\
\hline\hline
$(1,3,8)$ & $\begin{array}{c} (0,0,1,b),\\[-4pt] b\in\{0,1\} \end{array}$
& $\begin{array}{ll}
	\omega^1 = \delta\,e^1 - i\,e^2, & \omega^2 = - e^3 +\delta\, i\,e^4,\\
	\omega^3 =  \delta\,e^6 - i\,e^7, & \omega^4 = \frac12\,e^5 + 2\,\delta\, i\,e^8.\\
	\end{array}$
& $\mathfrak g_1^b$ \\
\hline\hline
\multirow{18}{*}{$(1,3,6,8)$} & $(0,1,1,b)$
&  $\begin{array}{ll}
	\omega^1 = \delta\,e^1 - i\,e^2, & \omega^2 = 4\,(\delta\,e^3 - i\,e^4),\\
	\omega^3 =  -4\,(e^6-\delta\,i\,e^7), & \omega^4 = -2\left(\delta\,e^5 + 4\,i\,e^8\right)\!.\\
	\end{array}$
& $\mathfrak g_2^{-4\delta b}$ \\
\cline{2-4}
	& $\begin{array}{c}(1,1,a,0)\\[-4pt] 0<a<2 \end{array}$
& \, (I)
& $\mathfrak g_2^{0}$ \\
\cline{2-4}
	& $(1,1,2,0)$
& $\begin{array}{l}
	\omega^1 = \frac{\delta}{\sqrt2}\,(e^1 + i\,e^2),\\
	\omega^2 = -\frac{1}{2}\,(e^3 - e^4) - i\,e^5, \\
	\omega^3 = \sqrt2 \,\delta\,(e^6 +i\,e^7), \\
	\omega^4 = \frac12\,(e^3 +e^4)  + e^5+ 2\,\delta\,i\,e^8.
	\end{array}$
& $\mathfrak g_3^{1}$ \\
\cline{2-4}
	& $\begin{array}{c}(1,1,a,0),\\[-4pt] a>2 \end{array}$
& \, (II)
& \multirow{3}{*}{$\mathfrak g_3^{0}$}\\
\cline{2-3}
	& $(1,0,1,0)$
& $\begin{array}{l}
	\omega^1 = \frac{1}{\sqrt2}\,(\delta\,e^1 - i\,e^2), \\
	\omega^2 = -\frac{1}{2}\,(e^3 - e^4) + i\,\delta\,e^5, \\
	\omega^3 = \frac{1}{\sqrt2}\,(\delta\,e^6 - i\,e^7), \ \ \omega^4 = \frac{1}{4}\,(e^3 +e^4) +i\,\delta\, e^8.\\
	\end{array}$\!\!\!
&  \\
\cline{2-4}
	& $\begin{array}{c}(1,0,1,b)\\[-4pt] b>0\end{array}$
& \multirow{3}{*}{\, (III)}
& \multirow{3}{*}{$\mathfrak g_4^{\frac {as}{b}, \frac{\vert b-2\delta \nu\vert}{a}}$} \\
\cline{2-2}
	& $\begin{array}{c}(1,1,a,b)\\[-4pt] a>0,\, b \neq  0,\, 2\delta\end{array}$
&
&  \\
\cline{2-4}
	& $\begin{array}{c}(1,1,a, 2\delta)\\[-4pt] a>0 \end{array}$
& $\begin{array}{l}
	\omega^1 = -\frac{1}{2}\left(e^1-e^2 - i\,(e^1+e^2)\right), \\
	\omega^2 = \frac{a}{2}\, e^4 + i\left(\frac{e^3}{2} + e^5\right), \\
	\omega^3 = -\frac{a}{2}\left(e^6-e^7 - i\,(e^6+e^7)\right), \\
	\omega^4 = -\left(\frac{a+2}{4}\, e^3  + e^5\right)+i\,\delta\,a\, e^8.
	\end{array}$
& $\mathfrak g_4^{\frac a2, 0}$ \\
\hline\hline
$(1,4,8)$ & $(1,1,0,2\delta)$
& $\begin{array}{ll}
	\omega^1 = \delta\,e^1 - i\,e^2, & \omega^2 = -\frac\delta 2\,(e^3 - 2\,i\,e^5), \\
	\omega^3 = \delta\,e^6 - i\,e^7, & \omega^4 = 2\delta\left(\frac\delta 4\,e^4 - \frac12\,e^5 +  i\,e^8\right).\\
	\end{array}$
& $\mathfrak g_5$ \\
\hline\hline
\multirow{4}{*}{$(1,4,6,8)$} & \multirow{2}{*}{$(1,0,0,1)$}
& \, $\omega^1 = \delta\,e^1 - i\,e^2,$
& \multirow{4}{*}{$\mathfrak g_6$} \\
	&
& \, $\omega^2 = (\frac{2\,\delta\,\nu}{b} - 1)\,e^3 + i\,\delta\,e^5,$
&	\\
\cline{2-2}
	& \multirow{2}{*}{$\begin{array}{c}(1,1,0,b)\\[-4pt] b\neq 0, 2\delta\end{array}$}
& \, $\omega^3 = (b-2\delta\,\nu) (\delta\,e^6 - i\,e^7),$
&	\\
	&
& \, $\omega^4 = \left(\frac b2-\delta\,\nu\right)\,e^4 - \delta\,\nu\,e^5 + 2\delta(b-2\,\delta\,\nu)\,i\,e^8.$
&	\\
\hline\hline
$(1,5,8)$ & $(0,0,0,1)$
& $\begin{array}{ll}
	\omega^1 = \delta\,e^1 - i\,e^2, & \omega^2 = -e^3 +\delta\,i\,e^4,\\
	\omega^3 = \delta\,e^6 - i\,e^7, & \omega^4 = \frac12\,e^5 + 2\,\delta\,i\,e^8.\\
	\end{array}$

& $\mathfrak g_7$ \\
\hline\hline
 $(1,5,6,8)$ & $\begin{array}{c}(0,1,0,b)\\[-4pt] b=\pm 1 \end{array}$
& $\begin{array}{ll}
	\omega^1 = \delta\,e^1 - i\,e^2, & \omega^2 = -(b\,e^3 +4\,i\,e^4),\\
	\omega^3 = -4\,(e^6 - i\,\delta\,e^7), & \omega^4 =-2\,(\delta\, e^5 + 4\,i\,e^8).\\
	\end{array}$
& $\mathfrak g_8$ \\
\hline
\end{tabular}
\medskip
\caption{Real Lie algebras and complex structures in Family I}
\label{tab:cambios-FI}
\end{table}

In Table~\ref{tab:cambios-FI}, note that $\delta=\pm1$ and $s=\sign(b-2\nu \delta)$. Moreover,
(I), (II), and (III) correspond to the following relations between the complex and the real bases:

\medskip
\noindent  (I) \ For $(\varepsilon,\nu,a,b)=(1,1,a,0)$ with $0<a<2$, one defines: 
\begin{equation*}
\begin{split}
\Real \omega^1 &= \frac{-\delta a^2}{2\sqrt3 (4-a^2)} \,e^2, \\
\Imag \omega^1 &= \frac{\delta a^2}{4\sqrt3 (4-a^2)}\left(\sqrt{4-a^2}\, e^1 + a e^2\right),\\
\Real \omega^2 &=  \frac{a^3}{24 (4-a^2)}\left(\sqrt{4-a^2}\, e^3 + a e^4\right),\\
\Imag \omega^2 &= \frac{-a^3}{12 (4\!-\!a^2)^{3/2}}\left(a e^3 \!-\! \sqrt{4\!-\!a^2} \, e^4 + a e^5\right),
\end{split}
\quad
\begin{split}
\Real \omega^3 &= \frac{\delta a^6}{48 \sqrt3 (4-a^2)^{2}}\left(a e^6 - \sqrt{4-a^2} \, e^7\right),\\
\Imag \omega^3 &= \frac{\delta a^6}{24 \sqrt3 (4-a^2)^{2}}\, e^6,\\
\Real \omega^4 &= \frac{a^4}{48 (4\!-\!a^2)^{3/2}}\left(a^2 e^3 \!-\! a\sqrt{4\!-\!a^2} \, e^4 + 4 e^5\right),\\
\Imag \omega^4 &= \frac{\delta a^8}{144 (4-a^2)^{5/2}}\, e^8.
\end{split}
\end{equation*}



\noindent (II) \ For $(\varepsilon,\nu,a,b)=(1,1,a,0)$ with $a>2$, one considers: 
\begin{equation*}
\begin{split}
\Real \omega^1 &=  -\frac{\delta a^2}{(a^2-4)} \sqrt{\frac32}\left(e^1 + \frac{1}{2-\sqrt3}\,e^2\right),
\\
\Imag \omega^1 &=  \frac{\delta a^2}{2(a^2-4)} \sqrt{\frac32}\left( (a+\sqrt{a^2-4})\,e^1+ \frac{a-\sqrt{a^2-4}}{2-\sqrt3}\,e^2\right),
\\
\Real \omega^2 &=  -\frac{3 a^3}{4 (a^2-4)}\left(\frac{a-\sqrt{a^2-4}}{(2-\sqrt3)^2}\,e^3 - (a+\sqrt{a^2-4})\,e^4\right),
\\
\Imag \omega^2 &= \frac{3 a^3}{2(a^2-4)^{3/2}} \left(\frac{ a-\sqrt{a^2-4}}{(2-\sqrt3)^2}\,e^3 + (a+\sqrt{a^2-4})\,e^4
	-\frac{2a}{2-\sqrt3}\,e^5\right),\\
\Real \omega^3 &= \frac{-\delta a^6}{(a^2-4)^2 (2-\sqrt3)} \left(\frac32\right)^{3/2} \left(\frac{a-\sqrt{a^2-4}}{2-\sqrt3}\,e^6
	- (a+\sqrt{a^2-4})\,e^7\right),
\\
\Imag\omega^3 &=\frac{-3\delta a^6}{(2-\sqrt3)(a^2-4)^2} \sqrt{\frac32}\left(\frac{1}{2-\sqrt3}\,e^6-e^7\right),
\end{split}
\end{equation*}
\begin{equation*}
\begin{split}
\Real \omega^4 &=  - \frac{3a^5}{8(a^2-4)^{3/2}} \left(\frac{a- \sqrt{a^2-4}}{(2-\sqrt3)^2}\,e^3 + (a+ \sqrt{a^2-4})\,e^4
	-\frac{8}{a (2-\sqrt 3)}\, e^5 \right),
\\
\Imag\omega^4 &=  \frac{-9\delta a^8}{(2-\sqrt3)^2(a^2-4)^{5/2}}\, e^8.
\end{split}
\end{equation*}


\bigskip
\noindent (III) \  For the cases $(\varepsilon,\nu,a,b)=(1,0,1,b)$ with $b>0$, and $(\varepsilon,\nu,a,b)=(1,1,a,b)$ with $a>0$ and $b\neq 0,2\delta$, let us define:
\begin{equation*}
\begin{split}
\Real\omega^1 &=-\frac{\delta}{2} \left(\sqrt{\frac{a}{a+s(b-2\nu \delta)}}\,e^1 -e^2\right),\\
\Imag\omega^1 &= \frac{s}{2} \left(\sqrt{\frac{a}{a+s(b-2\nu \delta)}}\,e^1+e^2\right), \\
\Real\omega^2 &= \frac{sa}{b}\,e^4, \\
\Imag\omega^2 &=  \delta\,s\, \sqrt{\frac{a}{a+s(b-2\nu \delta)}}\left(\frac{e^3}{2} + e^5\right),
\end{split}
\qquad
\begin{split}
\Real\omega^3 &=-\frac{a \delta}{2} \left(e^6- \sqrt{\frac{a}{a+s(b-2\nu \delta)}}\,e^7\right),\\
\Imag\omega^3 &= \frac{sa}{2} \left(e^6+\sqrt{\frac{a}{a+s(b-2\nu \delta)}}\,e^7\right), \\
\Real\omega^4 &=  -\sqrt{\frac{a}{a+s(b-2\nu \delta)}}\left(\frac{a+sb}{4}\,e^3 + \delta \nu s \,e^5\right), \\
\Imag\omega^4 &= \delta a\, \sqrt{\frac{a}{a+s(b-2\nu \delta)}}\,e^8.
\end{split}
\end{equation*}
\hskip.6cm

We now need to prove that the Lie algebras in \eqref{algebras-FI} are non-isomorphic. Obviously, this holds for NLAs having different ascending types. Hence, to complete the proof it suffices to analyze the NLAs underlying Family I within each of the different ascending types in Table \ref{tab:cambios-FI}.

The following invariants associated to NLAs will be relevant in our study:
\begin{itemize}
\item The descending type $(\text{dim}\,\frg^k)_k$: Recall that the \emph{descending central series} 
$\{\frg^k\}_{k\geq 0}$ of a Lie algebra~$\frg$ is defined by $\frg^0=\frg$ and
$\frg^k=[\frg^{k-1},\frg]$, for any $k\geq 1$. When $\frg$ is an $s$-step NLA, then $\frg_s=\{0\}$ and
we can associate an $s$-tuple to $\frg$, namely,
 $$(m^{1},\ldots,m^{s-1},m^s) :=\left(\text{dim\,}\frg^{1},\ldots, \text{dim\,}\frg^{s-1},\text{dim\,}\frg^{s}\right)$$
which strictly decreases, i.e. $2n>m^{1}>\cdots >m^{s-1}>m^{s}=0$.
We will say that $(\text{dim}\,\frg^k)_k=(m^{1},\ldots,m^{s})$ is the \emph{descending type} of~$\frg$.

\item The Betti numbers $b_k(\frg)$: The \emph{Chevalley-Eilenberg cohomology groups} of a Lie
algebra $\frg$ are defined by
$$H^k(\frg;\mathbb R)=\frac
   {\mathrm{Ker}\{d:\bigwedge^k\left(\frg^*\right)\longrightarrow\bigwedge^{k+1}\left(\frg^*\right)\}}
   {\mathrm{Im}\{d:\bigwedge^{k-1}\left(\frg^*\right)\longrightarrow\bigwedge^{k}\left(\frg^*\right)\}}, \qquad \text{ for }\quad 0\leq k\leq \dim \frg.$$
We will refer to their dimensions $b_{k}(\frg):=\text{dim}\,H^k(\frg;\mathbb{R})$ as the Betti numbers of $\frg$.

\item The number of functionally independent generalized Casimir operators $n_I(\frg)$: Let $\frg$
be an $m$-dimensional  Lie algebra with basis~$\{x_k\}_{k=1}^m$ and brackets
$[x_i,\,x_j]=\sum^m_{k=1} c_{ij}^k x_k$.
The vectors
\begin{equation}\label{vectores-Casimir}
\widehat X_k=\sum_{i,j=1}^m\,c^j_{ki}\,x_j\,\frac{\partial}{\partial\,x_i}
\end{equation}
generate a basis of the coadjoint representation of~$\frg$. One can construct a matrix~$C$ by rows
from the coefficients of these vectors, and then~$n_I(\frg)=m - \text{rank}\,C$.
For further details, see~\cite{SW}.
\end{itemize}

We start showing that the Lie algebras $\frg_1^0$ and $\frg_1^1$, which have ascending type $(1,3,8)$, are not isomorphic.
Although one can check that their descending types coincide,
the result comes as a direct consequence of their Casimir invariants. More precisely:

\begin{lemma}
Let $\frg_1^\gamma=(0^5,\, 13+15+24,\, 14-23+25,\, 16+27+\gamma\cdot 34)$, with $\gamma\in\{0,1\}$.
Then, $n_I(\frg_1^0)=4$ and $n_I(\frg_1^1)=2$. Therefore, $\frg_1^0$ and $\frg_1^1$ are not
isomorphic.
\end{lemma}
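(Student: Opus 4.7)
The strategy is to compute the generic rank of the coadjoint matrix $C$ defined in~\eqref{vectores-Casimir} for both Lie algebras and then invoke the formula $n_I(\frg)=8-\operatorname{rank} C$; since $n_I$ is an isomorphism invariant, producing different values is enough. The first step is to read off the non-zero brackets from the structure equations of $\frn_1^\gamma$: the terms in $de^6,de^7,de^8$ give
$[x_1,x_3],[x_1,x_5],[x_2,x_4]\propto x_6$,
$[x_1,x_4],[x_2,x_3],[x_2,x_5]\propto x_7$, and
$[x_1,x_6],[x_2,x_7]\propto x_8$, together with the single $\gamma$-dependent bracket $[x_3,x_4]=\gamma\, x_8$ (up to a sign fixed by the chosen convention). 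All other brackets vanish, so $C$ is an $8\times 8$ antisymmetric matrix whose entries are linear forms in $x_6,x_7,x_8$ only.

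The plan is then to split $C$ into two natural blocks. Rows $k=1,2$ have non-zero entries only in columns $3,\dots,7$, while rows $k=3,4,5,6,7$ have non-zero entries only in columns $1,2$ \emph{except} for the $\gamma$-dependent entries $C_{34}=\gamma x_8$ and $C_{43}=-\gamma x_8$. Row $k=8$ vanishes. For $\gamma=0$ the two blocks decouple completely: the top two rows contribute rank $2$ (visible from the $2\times 2$ submatrix on columns $6,7$, which is $x_8\,\mathrm{Id}$), and the bottom five rows are supported in only two columns, so they contribute rank at most $2$ (and exactly $2$ generically, since rows $k=3$ and $k=4$ yield the minor $x_6^2+x_7^2$). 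Thus $\operatorname{rank} C=4$ and $n_I(\frn_1^0)=4$.

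For $\gamma=1$ the entries $\pm x_8$ in positions $(3,4)$ and $(4,3)$ break the block decomposition, and the plan is to exhibit a non-vanishing $6\times 6$ minor using rows $\{1,2,3,4,6,7\}$ and columns $\{1,2,3,4,6,7\}$. In this submatrix, rows $6$ and $7$ have a unique non-zero entry (in columns $1$ and $2$ respectively), rows $3$ and $4$ now reach column~$4$ and column~$3$ through the $\gamma$-terms, and rows $1$ and $2$ still reach columns $6$ and $7$. Laplace expansion along rows $6$ and $7$, then along the new $\pm x_8$ entries, reduces the determinant to a power of $x_8$, so it is non-zero generically. Since $C$ is antisymmetric its rank is even, and it cannot exceed $6$ because row $8$ vanishes (hence column $8$ also, by antisymmetry) and row $5$ remains in the span of rows $3,4$; so $\operatorname{rank} C=6$ and $n_I(\frn_1^1)=2$.

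The main technical step is the $6\times 6$ minor computation for $\gamma=1$: verifying that the $\gamma$-dependent entries genuinely raise the rank from $4$ to $6$ (rather than merely perturbing the nullspace). Once this minor is shown to be $\pm x_8^7$, the non-isomorphism follows immediately from $n_I(\frn_1^0)=4\neq 2=n_I(\frn_1^1)$.
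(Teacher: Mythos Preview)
Your approach is the same as the paper's: compute the generic rank of the coadjoint matrix $C$ and use $n_I=8-\operatorname{rank} C$. Your block decomposition for $\gamma=0$ and choice of $6\times 6$ minor for $\gamma=1$ are correct and arguably cleaner than the paper's enumeration of minors.

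Two small slips to fix. First, the $6\times 6$ minor on rows/columns $\{1,2,3,4,6,7\}$ is $x_8^6$, not $x_8^7$: the entries of $C$ are linear in the $x_i$, so any $6\times 6$ minor is homogeneous of degree $6$. Second, your claim that ``row $5$ remains in the span of rows $3,4$'' is false when $\gamma=1$ and $x_8\neq 0$ (the $\gamma x_8$ entries in rows $3,4$, columns $3,4$ prevent it). This claim is also unnecessary: you already noted that $C$ is antisymmetric with a zero row, so $\operatorname{rank} C\leq 7$ forces $\operatorname{rank} C\leq 6$ by parity, and your nonzero $6\times 6$ minor then gives $\operatorname{rank} C=6$. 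Drop the row-$5$ remark and the argument is complete.
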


\begin{proof}
Using the equations of $\frak g_1^{\gamma}$ and the well-known formula~$de(X,Y)=-e([X,Y])$, for~$e\in\frg^*$ and~$X,Y\in\frg$, one can see that the matrix $C$ constructed from the coefficients of the vectors~\eqref{vectores-Casimir}~is
$$C=
\begin{pmatrix}
0 & 0 & -x_6 & -x_7 & -x_6 & -x_8 & 0 & 0 \\
0 & 0 &  x_7 & -x_6 & -x_7 & 0 & -x_8 & 0 \\
x_6 & -x_7 & 0 & -\gamma\,x_8 & 0 & 0 & 0 & 0 \\
x_7 &  x_6 & \gamma\,x_8 & 0 & 0 & 0 & 0 & 0 \\
x_6 &  x_7 & 0 & 0 & 0 & 0 & 0 & 0 \\
x_8 & 0 & 0 & 0 & 0 & 0 & 0 & 0 \\
0 & x_8 & 0 & 0 & 0 & 0 & 0 & 0 \\
0 & 0 & 0 & 0 & 0 & 0 & 0 & 0
\end{pmatrix}.$$
The minors of orders~$8$ and~$7$ are all equal to zero. For those of order~$6$, one obtains the
following expressions:
$$\gamma^2\,x_7^2\,x_8^4, \quad -\gamma^2\,x_6\,x_7\,x_8^4, \quad
   -\gamma^2\,x_7\,x_8^5, \quad \gamma^2\,x_6^2\,x_8^4, \quad
   \gamma^2\,x_6\,x_8^5, \quad \gamma^2\,x_8^6.$$
Consequently, if~$\gamma=1$ then rank\,$C=6$ and~$n_I=2$ for the algebra~$\mathfrak g_1$.
However, if~$\gamma=0$ then all the previous
expressions vanish, and it is possible to see that rank\,$C=4$, thus~$n_I=4$ for~$\mathfrak g_0$. This gives our result.
\end{proof}

It remains to study the NLAs with ascending type $(1,3,6,8)$.
In Table~\ref{Tabla-invariantes-FI}, we provide the descending type of the NLAs in the families
$\mathfrak g_2^{\alpha}$, $\mathfrak g_3^\gamma$, and $\mathfrak g_4^{\alpha,\beta}$,
as well as their number of functionally independent Casimir operators.
\begin{table}[h]
\begin{center}
\renewcommand{\arraystretch}{1.4}
\begin{tabular}{|c||c|c|c|c|}
\hline
$\frg$      & real parameter(s)    & $\dim \{\frg^k\}_k$     & $n_I(\frg)$ \\
\hline\hline
\multirow{2}{*}{$\frg_2^{\alpha}$} & $\alpha=0$ & \multirow{2}{*}{$(4,3,1,0)$} & $4$ \\ \cline{2-2}\cline{4-4}
            & $\alpha\neq 0$     &        &  $2$ \\
\hline\hline
\multirow{2}{*}{$\frg_3^{\gamma}$} & $\gamma=0$ & $(4,3,1,0)$ & \multirow{2}{*}{$4$} \\ \cline{2-3}
            & $\gamma=1$   & $(4,2,1,0)$ &   \\
\hline\hline
\multirow{3}{*}{$\frg_4^{\alpha,\beta}$} &  $\alpha\neq 0$, $\beta=1$ & $(4,2,1,0)$ & \multirow{3}{*}{$2$} \\ \cline{2-3}
            &  $\alpha\neq 0$, $\beta\in (0,1)\cup (1,\infty)$, & \multirow{2}{*}{$(4,3,1,0)$} & \\ \cline{2-2}
            &  $\alpha> 0$, $\beta=0$ &  &   \\
\hline
\end{tabular}
\vskip.1cm
\caption{Some invariants for the NLAs with ascending central series $(1,3,6,8)$}
\label{Tabla-invariantes-FI}
\end{center}
\end{table}

\vspace{-0.6cm}
A direct consequence of these invariants is that some of the NLAs in the previous three families cannot be isomorphic. 
In particular, it suffices to prove the result below:

\begin{proposition}\label{demostrar}
The following
pairs of Lie algebras are not isomorphic:
\begin{itemize}
\item[(i)] $\frak g_2^0$ and $\frak g_3^0$.
\item[(ii)] $\frak g_2^\alpha$ and $\frak g_2^{\alpha'}$ whenever $\alpha \neq \alpha'$ and $\alpha\alpha'\neq0$.
\item[(iii)] $\frak g_2^{\alpha'}$ and $\frak g_4^{\alpha, \beta}$ whenever $\alpha' \neq 0$ and $\beta\neq 1$.
\item[(iv)] $\frak g_4^{\alpha, 1}$ and $\frak g_4^{\alpha', 1}$ whenever $\alpha\neq \alpha'$.
\item[(v)] $\frak g_4^{\alpha, \beta}$ and $\frak g_4^{\alpha', \beta'}$ whenever $(\alpha, \beta)\neq (\alpha', \beta')$ and $\beta, \beta' \neq 1$.
\end{itemize}
\end{proposition}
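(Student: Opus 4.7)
The plan is to treat each of the five claims by a direct isomorphism analysis, exploiting the fact that every Lie algebra appearing in the statement has the same ascending type $(1,3,6,8)$. Thus, for any hypothetical isomorphism $F\colon\frg\to\frg'$ between two algebras in the list, $F$ must preserve the canonical flag
\[
\{0\}\subset \frg_1\subset \frg_2\subset \frg_3\subset \frg,
\]
as well as the derived series $\frg\supset\frg^1\supset\frg^2\supset\{0\}$ (with $\dim \frg^1=4$, $\dim\frg^2=3$ in the cases with descending type $(4,3,1,0)$, and $\dim\frg^2=2$ in the cases with $(4,2,1,0)$). Using the given structure equations in Theorem~\ref{main} one computes explicitly:
\[
\frg_1=\langle e_8\rangle,\qquad \frg_2=\langle e_6,e_7,e_8\rangle,\qquad \frg_3=\langle e_3,e_4,e_5,e_6,e_7,e_8\rangle
\]
for every algebra in the list (after permuting $e_3,\ldots,e_5$ in a fixed way). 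Consequently, when $F$ is written in the basis $\{e_i\}$ its matrix is block upper-triangular with respect to this flag, and the diagonal blocks act on $\frg/\frg_3\cong\langle e_1,e_2\rangle$, on $\frg_3/\frg_2$, on $\frg_2/\frg_1$, and on $\frg_1$.

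First I would handle \textrm{(i)} and \textrm{(iii)}, where the algebras differ structurally. For \textrm{(i)}, consider the ideal $I=\frg^1\cap \frg_2$ and the induced bilinear map $\overline{[\cdot,\cdot]}\colon(\frg/\frg_1)\otimes(\frg/\frg_1)\to I$. In $\frak n_3^0$ the component of $[\,\cdot\,,\,\cdot\,]$ that lands in $\langle e_6\rangle$ is a rank-two bilinear form (coming from $13$ and $25$ only), while in $\frak n_2^0$ the analogous form has rank three (coming from $13$, $15$, $24$); since the rank is an isomorphism invariant of this tensor, the two algebras cannot be isomorphic. For \textrm{(iii)}, the distinguishing feature of $\frak n_4^{\eta,\vartheta}$ is the presence of the bracket $[e_4,e_5]$ (via the $45$-term in $de^8$), which together with the $34$-term forces $\dim [\frg_3,\frg_3]=1$ with a specific direction independent of $[e_3,e_4]$. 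In $\frak n_2^\alpha$ with $\alpha\neq 0$ we have instead $[\frg_3,\frg_3]=\langle [e_3,e_4]\rangle$, and pulling back the corresponding codimension-one subspace of $\frg_3$ annihilating this bracket gives an invariant that separates the two families except in the borderline case $\vartheta=1$.

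For \textrm{(ii)}, \textrm{(iv)}, and \textrm{(v)}, where the families have identical abstract form but different parameters, I would parametrise $F$ by its block entries. The diagonal blocks introduce scaling factors $\lambda_1,\ldots,\lambda_4$ (on the four graded pieces), and compatibility with the brackets $[e_1,e_2]=-e_5$, $[e_1,e_3]=-e_6$ (or its analogue), etc., yields multiplicative relations among these scales. Tracking them through the equation for $de^8$ gives, in \textrm{(ii)}, an identity of the form $\alpha'=\alpha\cdot (\lambda_1\lambda_2)\lambda_3\lambda_4^{-1}$ times a product that the previous constraints force to equal $1$, hence $\alpha'=\alpha$. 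An analogous (but slightly longer) computation in \textrm{(iv)} and \textrm{(v)} forces $\eta'=\eta$ and $\vartheta'=\vartheta$: the role of $\vartheta$ is that it appears as the ratio of the coefficients of $e^{15}$ and $e^{24}$ in the $e^6$-equation (and similarly for $e^7$), and this ratio is scale-invariant under the diagonal action allowed by the filtration; likewise $\eta$ arises as the ratio between the $e^{14}$ and $e^{24}$ coefficients, again scale-invariant. The off-diagonal entries of $F$ contribute only additive corrections that are absorbed by translating within the various $\frg_k$, and a careful check confirms they cannot alter $\eta$ or $\vartheta$.

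The principal obstacle is bookkeeping: each case leads to a system of roughly twenty polynomial equations in the entries of $F$, and one must organise the solution so that the scale-invariant parameters are isolated cleanly. The strategy to keep this tractable is to solve the equations in the order dictated by the flag (first the action on $\frg/\frg_3$, then on $\frg_3/\frg_2$, etc.), so that each step eliminates variables before new ones appear; with this ordering, the equation coming from the top-degree term $de^8$ becomes a single scalar identity relating the parameters, from which the desired conclusion $\alpha=\alpha'$, $\eta=\eta'$, or $(\eta,\vartheta)=(\eta',\vartheta')$ is immediate.
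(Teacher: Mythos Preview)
Your overall strategy---exploit the flag $\{0\}\subset\frg_1\subset\frg_2\subset\frg_3\subset\frg$ together with the derived series to force any isomorphism into block-triangular form, and then read off constraints---is exactly what the paper does. However, several of the specific invariants you propose either do not exist or are not invariant, and the ``scale-invariance'' heuristic for~(ii), (iv), (v) is too coarse to yield the conclusion.

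For~(i), the quantity you call ``the rank of the component of $[\cdot,\cdot]$ landing in $\langle e_6\rangle$'' is not an invariant: the line $\langle e_6\rangle\subset\frg_2/\frg_1$ is not canonical, since an isomorphism acts by an arbitrary element of $\GL(2)$ on $\frg_2/\frg_1\cong\langle e_6,e_7\rangle$. If you instead look at $\xi\circ[\cdot,\cdot]$ as $\xi$ ranges over $(\frg_2/\frg_1)^*$, you will find that in both $\frn_2^0$ and $\frn_3^0$ the generic such $2$-form has the same rank (indeed $e^{13}+e^{15}+e^{24}=e^1\wedge(e^3+e^5)+e^2\wedge e^4$ and $e^{13}+e^{25}$ are both decomposable sums of two simple terms), so this does not separate them. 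The paper instead carries out the direct isomorphism computation: from $[dF(e'^k)-F(de'^k)]_{15}$, $[dF(e'^k)-F(de'^k)]_{25}$ for $k=6,7$ one solves the $\lambda^6_\bullet,\lambda^7_\bullet$, and then the equations coming from $de^8$ force $(\lambda^1_1,\lambda^1_2)=(0,0)$ or $(\lambda^2_1,\lambda^2_2)=(0,0)$, contradicting invertibility.

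For~(ii), (iv), (v), your claim that $\eta$ and $\vartheta$ appear as ``scale-invariant ratios'' is incorrect in detail (in $\frn_4^{\eta,\vartheta}$ one has $de^6=e^{15}+(1+\eta)e^{24}$, so the ratio you describe involves $\eta$, not $\vartheta$) and, more seriously, wrong in principle: the filtration allows a full $\GL(2)$ on $\frg/\frg_3$ and a $\GL(2)$ on the part of $\frg_3/\frg_2$ complementary to the derived algebra, not merely diagonal scalings. These off-diagonal mixings do change the na\"ive ratios. The paper's computation shows that the analysis genuinely bifurcates (for instance, in~(v) into the cases $\lambda^5_3=0$ and $\lambda^5_3\neq 0$, the second of which leads to $\vartheta+\vartheta'=0$ and $\eta+\eta'=0$, ruled out only by the sign normalisations on $(\eta,\vartheta)$). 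Your sketch does not account for these branches, and the assertion that off-diagonal entries ``contribute only additive corrections that are absorbed'' is precisely what fails: they interact multiplicatively with the diagonal blocks through the Jacobi-type constraints. You will need to carry out the full case analysis rather than rely on a ratio argument.
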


In order to study the five cases above, one directly analyzes the existence of isomorphisms
between any two of the previous Lie algebras.
The procedure is quite similar to that used to prove the non-equivalence of complex structures in Section~\ref{reduc-fam-FI}. Indeed, 
let $f \colon \frg \longrightarrow \frg'$ be an homomorphism
of Lie algebras. Its dual map $f^* \colon \frg'^* \longrightarrow \frg^*$ naturally extends to a map
$F \colon \bigwedge^*\frg'^* \longrightarrow \bigwedge^*\frg^*$ that commutes with the differentials, i.e. $F\circ d=d\circ F$.
If $\{e^i\}_{i=1}^8$ and $\{e'^{\,i}\}_{i=1}^8$ are any bases for~$\frg^*$
and $\frg'^*$, respectively, then any Lie algebra isomorphism is defined by
\begin{equation}\label{cambio-base}
F(e'^{\,i}) =\sum_{j=1}^8 \lambda_j^i\, e^j, \quad i=1,\ldots,8,
\end{equation}
and satisfies the conditions
\begin{equation}\label{condiciones}
F(de'^{\,i}) = d(F(e'^{\,i})), \ \text{ for each }1\leq i\leq 8,
\end{equation}
where the matrix $\Lambda=(\lambda^i_j)_{1\leq i,j\leq 8}$ belongs to ${\rm GL}(8,\mathbb{R})$.
Taking bases for $\frg,\,\frg'\in\left\{ \frg_2^\alpha, \, \frg_3^0, \, \frg_4^{\alpha,\beta} \right\}$ satisfying the corresponding structure equations
in Theorem~\ref{intro-main-theorem-2},
one obtains a reduction of $F$ that eventually allows to prove the desired result.
As this is a very technical proof due to the several cases to be considered, we omit the details here. Nonetheless, we refer the reader to the proof of Proposition~\ref{no-iso-m4} in this paper to get an idea of how it works.

%
%
\subsection{Classification of NLAs underlying Family II}\label{clasif-real-FamII}
The goal of this section is to prove that the non-isomorphic $8$-dimensional NLAs that admit complex structures in the Family~II are those in Theorem~\ref{intro-main-theorem-2} denoted by
\begin{equation*}
\mathfrak g_{9}^{\gamma}, \
\mathfrak g_{10}^{\gamma}, \
\mathfrak g_{11}^{\alpha, \beta}, \
\mathfrak g_{12}^{\gamma},
\end{equation*}
where $\gamma\in\{0,1\}$ and $(\alpha, \beta)=(0,0), (1,0),(0,1)$ or $(\alpha, 1)$ with $\alpha\in\mathbb R^{+}$. Moreover,
their ascending types are listed in the first column of Table~\ref{tab:cambios-FII}.

Let $(\frg,J)$ be an $8$-dimensional NLA endowed with a complex structure. If $J$ belongs to 
Family II, then the complex structure equations of $(\frg,J)$ are given by~\eqref{FII-SnN} with 
parameters in 
Table~\ref{tab:valores-complejos2}.
For each tuple $(\varepsilon, \mu,\nu, a,b)$, define a real basis $\{e^i\}_{i=1}^8$ according to the third column in Table~\ref{tab:cambios-FII} to find the real Lie algebras above. 
In the table, the following notation is used
$$\tau = \left\{\begin{array}{ll}
1, & a\geq 0,\\[5pt]
-1, & a<0,
\end{array}\right.
\text{ \ \ and \ \ }
\eta = \left\{\begin{array}{ll}
1,& b=0,\\[5pt]
\dfrac{-\sqrt3}{4b},& b\neq 0.
\end{array}\right.
$$

\begin{table}[ht]
\centering
\renewcommand{\arraystretch}{1.4}
\renewcommand{\tabcolsep}{2pt}
\begin{tabular}{|c|c|l|c|}
\hline
\begin{tabular}{c} Ascending \\[-8pt] type \end{tabular} & $(\varepsilon,\mu,\nu,a,b)$ & \ Real basis $\{e^k\}_{k=1}^8$ & NLA \\
\hline\hline
\multirow{18}{*}{$(1,3,5,8)$} & \multirow{2}{*}{$(0,1,0, 0,0)$}
& \, $\omega^1 = -\frac18\,(e^1 + i\,e^2),$
& \multirow{2}{*}{$\mathfrak g_{9}^0$} \\
	&
& \, $\omega^2 = \frac1{16}\,(e^4 + i\,e^5),$
&	\\
\cline{2-2}\cline{4-4}
	& \multirow{2}{*}{$(0,1,0, 1, 0)$}
& \, $\omega^3 = -\frac1{32}\,(e^6 + i\,e^7),$
& \multirow{2}{*}{$\mathfrak g_{9}^1$} \\
	&
& \, $\omega^4 = -\frac1{128}\,(32\,e^3 - i\,e^8).$
&	\\
\cline{2-4}
	& \multirow{2}{*}{$\begin{array}{c}(1,0,0, a, 0)\\[-4pt] a\in\{0,1\}\end{array}$}
& \, $\omega^1 = \frac1{2\eta}\left(-\frac{e^1}{\sqrt3} + i\,e^2\right),$
& \multirow{2}{*}{$\mathfrak g_{10}^0$} \\
	& 
& \, $\omega^2 = \frac1{2\eta^3}\left(\frac{e^4}{3} - i \left(\frac{e^5}{\sqrt3} - 2\,\gamma\,\eta^2 e^2\right)\right),$
&	\\
\cline{2-2}\cline{4-4}
	& \multirow{2}{*}{$\begin{array}{c}(1,0,0, a, b)\\[-4pt] a\in\{0,1\},\, b\neq 0\end{array}$}
& \, $\omega^3 = \frac1{12\eta^4}\,(-\sqrt3\,e^6 + i\,e^7),$
& \multirow{2}{*}{$\mathfrak g_{10}^1$} \\
	&
& \, $\omega^4 = \frac1{6\eta^5}\left(-\sqrt3\,\eta^3\,e^3 + i \left(\frac{e^8}{2} - \frac{b \eta}{\sqrt3}\,e^6\right)\right).$
&	\\
\cline{2-4}
	& $(1,1,0, 0, 0)$
& $\begin{array}{l}
	\omega^1 = \frac1{2}\left(-e^1+ i\,\sqrt3\,e^2\right), \\
	\omega^2 = \frac{\sqrt3}{4}\left(e^4- i\,\sqrt3\,e^5\right), \\
	\omega^3 = \frac38(-\sqrt3\,e^6 + i\,e^7), \\
	\omega^4 = \frac{\sqrt3}{4}\left(- e^3 + \frac{3i}{2}\,e^8\right).
	\end{array}$
& $\mathfrak g_{11}^{0,0}$ \\
\cline{2-4}
	& $\begin{array}{c}(1,1,0, a, 0)\\[-4pt] a\neq 0\end{array}$
& $\begin{array}{l}
	\omega^1 = \frac {2a}{\sqrt3}\left(-e^1+ i\,\sqrt3\,e^2\right), \\
	\omega^2 = \frac{4a^2} {\sqrt3}\left(e^4- i\,\sqrt3\,e^5\right), \\
	\omega^3 = \frac{8 a^3}{\sqrt3}(-\sqrt3\,e^6 + i\,e^7), \\
	\omega^4 =a\left(- e^3 + \frac{32\, i\, a^3}{\sqrt3}\,e^8\right).
	\end{array}$
& $\mathfrak g_{11}^{1,0}$ \\
\cline{2-4}
	& $\begin{array}{c}(1,1,0, a, b)\\[-4pt] b\neq 0\end{array}$
& $\begin{array}{l}  
	\omega^1 = -\frac {b}{3}\left(e^1- \frac{i\,\sqrt3\,\tau\,b}{|b|}\,e^2\right), \\
	\omega^2 = \frac{b^2} {3\,\sqrt3}\left(\frac{\tau\,b}{|b|}\,e^4- i\,\sqrt3\,e^5\right), \\
	\omega^3 = -\frac{b^3}9\,\left(\frac{\sqrt3\,\tau\,b}{|b|}\, e^6 - i\,e^7\right), \\
	\omega^4 = -\frac{\tau\,|b|}{2\sqrt3}\left(e^3 + \frac{4\,i\,b^3}{9} \left(e^6 -e^8\right)\right).   
	\end{array}$
& $\mathfrak g_{11}^{\frac{2\sqrt3|a|}{|b|}, 1}$\\
\hline\hline
\multirow{4}{*}{$(1,3,5,6,8)$} & \multirow{2}{*}{$(1,0,1, a, 0)$}
& \, $\omega^1 = \frac1{2\eta}\left(-\frac{e^1}{\sqrt3} + i\,e^2\right),$
& \multirow{2}{*}{$\mathfrak g_{12}^0$} \\
	&
& \, $\omega^2 = \frac1{2\eta^3}\left(\frac{e^4}{3} - i \left(\frac{e^5}{\sqrt3} - 2\,a\,\eta^2 e^2\right)\right), $
&	\\
\cline{2-2}\cline{4-4}
	& \multirow{2}{*}{$\begin{array}{c}(1,0,1, a, b)\\[-4pt] b\neq 0\end{array}$}
& \, $\omega^3 = \frac1{12\eta^4}(-\sqrt3\,e^6 + i\,e^7),$
& \multirow{2}{*}{$\mathfrak g_{12}^1$} \\
	&
& \, $\omega^4 = \frac1{6\eta^5}\left(-\sqrt3\,\eta^3\,e^3 + i \left(\frac{e^8}{2} - \frac{b \eta}{\sqrt3}\,e^6\right)\right).$
&	\\
\hline
\end{tabular}
\medskip
\caption{Real Lie algebras and complex structures in Family II}
\label{tab:cambios-FII}
\end{table}

One can easily check that the Lie algebras $\mathfrak g_{9}^{0}$ and $\mathfrak g_{9}^{1}$ are not isomorphic, as the former has four decomposable $d$-exact $2$-forms, while the latter only has three.

Note also that the second Betti numbers of $\mathfrak g_{10}^{0}$ and $\mathfrak g_{10}^{1}$ do not coincide, as

\vskip.2cm

\hskip2cm $H^2 (\mathfrak g_{10}^0) =
    \langle\, [e^{12}],\,[e^{25}],\,[e^{34}],\,[e^{35}],\,[e^{17}+e^{26}],\,[e^{38}-e^{46}-e^{57}] \,\rangle$,

\vskip.2cm

\hskip2cm  $H^2 (\mathfrak g_{10}^1) =
    \langle\, [e^{12}],\,[e^{25}],\,[e^{34}],\,[e^{35}],\,[e^{17}+e^{26}] \,\rangle$.

\vskip.2cm
    
Therefore, these two NLAs are not isomorphic.

The real Lie algebras~$\mathfrak g_{11}^{\alpha, \beta}$ are studied by the authors in~\cite{LUV-complex}, where it is proved that they are non-isomorphic for different values of $\alpha,\beta$ satisfying $(\alpha, \beta)=(0,0), (1,0)$ or $(\alpha^{\geq 0}, 1)$.

\bigskip

To finish the study for the ascending type $(1,3,5,8)$ one needs to prove that there are no isomorphisms between $\mathfrak g_{i}^{\bullet}$ and $\mathfrak g_{j}^{\bullet}$ for $i,j\in\{9,10,11\}$, $i\neq j$. Although the descending type of these three families is exactly the same, namely~$(5,3,1,0)$, one can
make use of the number of functionally independent Casimir invariants $n_I$
and the second Betti number $b_2$. In fact,  these two invariants allow us to conclude that there are no isomorphisms
between any two
NLAs belonging to two different families (see Table~\ref{Tabla-invariantes-FII}).

\begin{table}[h!]
\begin{center}
\renewcommand{\arraystretch}{1.4}
\begin{tabular}{|c||c|c|c|c|}
\hline
$\frg$      & real parameter(s)    & $b_2(\frg)$     & $n_I(\frg)$ \\
\hline\hline
$\mathfrak g_{9}^{\gamma}$ & $\gamma\in\{0,1\}$ & $6$ & $2$ \\
\hline\hline
\multirow{2}{*}{$\mathfrak g_{10}^{\gamma}$} & $\gamma=0$ & $6$ & \multirow{2}{*}{$4$} \\ \cline{2-3}
            & $\gamma=1$   & $5$ &   \\
\hline\hline
$\mathfrak g_{11}^{\alpha,\beta}$ &  $(\alpha,\beta)=(0,0),(1,0)$, or $(\alpha^{\geq 0}, 1)$ & $4$ & $2$ \\
\hline
\end{tabular}
\vskip.1cm
\caption{Some invariants for the NLAs with ascending type $(1,3,5,8)$}
\label{Tabla-invariantes-FII}
\end{center}
\end{table}

One now needs to show that $\mathfrak g_{12}^{0}$ and $\mathfrak g_{12}^{1}$ are not isomorphic.
For this purpose we make use of the following result:

\begin{lemma}\label{lema-F2}
Let $\frg=\mathfrak g_{12}^{0}$ and $\frg'=\mathfrak g_{12}^{1}$. Let $\{e^i\}_{i=1}^8$ and $\{e'^{\,i}\}_{i=1}^8$
be respective bases for $\frg^*$ and $\frg'^*$ satisfying the corresponding structure equations in Theorem~\ref{intro-main-theorem-2}.  If
$f\colon\frg\longrightarrow\frg'$ is an isomorphism of Lie algebras, then the dual map
$f^*\colon\frg'^{\,*}\longrightarrow\frg^*$ satisfies
\begin{equation}\label{cambio-general-reducido}
\begin{split}
&f^*(e'^{\,i}) \wedge e^{12} =0, \text{ \ for } i=1,2,\\
&f^*(e'^{\,3}) \wedge e^{123} =0,
\end{split}
\qquad\quad
\begin{split}
&f^*(e'^{\,i}) \wedge e^{12345} =0,  \text{ for } i=4,5.\\
&f^*(e'^{\,i}) \wedge e^{1234567} =0,  \text{ for } i=6,7.
\end{split}
\end{equation}
\end{lemma}

\begin{proof}
More generally, let $f\colon\frg\longrightarrow\frg'$ be an isomorphism of two $m$-dimensional Lie algebras.
Consider an ideal $\{0\}\neq \fra \subset \frg$, and let $\fra'=f(\fra) \subset \frg'$
be the corresponding ideal in $\frg'$. Let $\{x_{r+1},\ldots,x_m\}$ and $\{x'_{r+1},\ldots,x'_m\}$ be any bases for~$\fra$ and~$\fra'$, respectively.
We complete them up to respective bases $\{x_1,\ldots,x_r,x_{r+1},\ldots,x_m\}$ and $\{x'_1,\ldots,x'_r,x'_{r+1},\ldots,x'_m\}$ for~$\frg$ and~$\frg'$.
Denote the dual bases of~$\frg^*$ and~$\frg'^{\,*}$, respectively, by $\{x^i\}_{i=1}^m$
and $\{x'^{\,i}\}_{i=1}^m$. In these conditions, it is proved in \cite[Lemma 4.2]{LUV-gen-complex} that the dual map $f^*\colon\frg'^{\,*}\longrightarrow\frg^*$
satisfies
\begin{equation}\label{tech}
f^*(x'^{\,i}) \wedge x^{1} \wedge \ldots \wedge x^{r}=0, \qquad \mbox{ for all }\ i=1,\ldots,r.
\end{equation}

We will apply this result to different choices of $\fra$.
Denote by $\{e_i\}_{i=1}^8$ the basis for $\frg=\frg_{12}^0$ dual to a basis
$\{e^i\}_{i=1}^8$ of $\frg^*$ that satisfies the corresponding structure equations in 
Theorem~\ref{intro-main-theorem-2}. Proceed similarly to define $\{e'_i\}_{i=1}^8$ for $\frg'=\frg_{12}^1$ and $\{e'^{\,i}\}_{i=1}^8$ for $\frg'^{\,*}$.
The ascending central series of $\frg$ and $\frg'$ are
$$
\mathfrak{g}_1=\langle e_8 \rangle \subset
\mathfrak{g}_2=\langle e_6,e_7,e_8 \rangle \subset
\mathfrak{g}_3=\langle e_4,e_5,e_6,e_7,e_8 \rangle \subset
\mathfrak{g}_4=\langle e_3,e_4,e_5,e_6,e_7,e_8 \rangle,
$$
and
$$
\mathfrak{g'}_1=\langle e'_8 \rangle \subset
\mathfrak{g'}_2=\langle e'_6,e'_7,e'_8 \rangle \subset
\mathfrak{g'}_3=\langle e'_4,e'_5,e'_6,e'_7,e'_8 \rangle \subset
\mathfrak{g'}_4=\langle e'_3,e'_4,e'_5,e'_6,e'_7,e'_8 \rangle.
$$
Since $f(\mathfrak{g}_k)=\mathfrak{g}'_k$ for any Lie algebra isomorphism $f\colon\frg\longrightarrow\frg'$,
by \eqref{tech} applied to $\mathfrak{a}=\frg_k$ for $1\leq k\leq 4$  
one finds~\eqref{cambio-general-reducido}.
\end{proof}

\smallskip
This allows us to prove the following proposition, which completes our result.

\begin{proposition}\label{no-iso-m4}
The Lie algebras $\mathfrak g_{12}^{0}$ and $\mathfrak g_{12}^{1}$ are not isomorphic.
\end{proposition}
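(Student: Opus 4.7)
The plan is to mimic the strategy used earlier in Proposition~\ref{demostrar}: argue by contradiction and exploit Lemma~\ref{lema-F2} to parametrise any hypothetical isomorphism by a block-triangular matrix, then push the relations $d\circ f^*=f^*\circ d$ through the basis until an overdetermined subsystem appears, caused by the $\gamma'=1$ term $e'^{25}$ in $de'^8$ that is absent for $\gamma=0$.

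Concretely, assume for contradiction that $f\colon\mathfrak m_4^0\longrightarrow\mathfrak m_4^1$ is a Lie algebra isomorphism, and fix bases $\{e^i\}$ of $(\mathfrak m_4^0)^*$ and $\{e'^i\}$ of $(\mathfrak m_4^1)^*$ realising the structure equations of Theorem~\ref{main-F2-real}. By Lemma~\ref{lema-F2}, writing $f^*(e'^i)=\sum_j\lambda^i_j\,e^j$, the matrix $\Lambda=(\lambda^i_j)$ is block lower triangular with diagonal blocks of sizes $(2,1,2,2,1)$, and $\det\Lambda\neq 0$. Equating $d(f^*(e'^i))=f^*(de'^i)$ for $i=3,4,5$ is straightforward and yields $\lambda^3_3=\lambda^1_1\lambda^2_2-\lambda^1_2\lambda^2_1$, together with
$$\lambda^4_4=\lambda^1_1\lambda^3_3,\ \ \lambda^4_5=\lambda^1_2\lambda^3_3,\ \ \lambda^5_4=\lambda^2_1\lambda^3_3,\ \ \lambda^5_5=\lambda^2_2\lambda^3_3,$$
and analogous expressions for $\lambda^4_3,\lambda^5_3$.

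Next I would impose $d(f^*(e'^i))=f^*(de'^i)$ for $i=6,7$. Comparing the $e^{14}$ and $e^{25}$ coefficients in the first, and the $e^{15}$ and $e^{24}$ coefficients in the second, gives (after dividing by $\lambda^3_3\neq0$) the quadratic relations
$$(\lambda^1_1)^2+(\lambda^2_1)^2=(\lambda^1_2)^2+(\lambda^2_2)^2,\qquad \lambda^1_1\lambda^2_1=\lambda^1_2\lambda^2_2,$$
while the remaining coefficients fix $\lambda^6_j,\lambda^7_j$ ($j=4,5,6,7$) entirely in terms of the $2\times 2$ block $(\lambda^i_j)_{i,j=1,2}$. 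A short algebraic argument (as in Case~2/Case~3 of the family $\frn_2^{\alpha}$) shows that, together with $\det(\lambda^i_j)_{i,j=1,2}\neq 0$, these relations force $\lambda^1_2=\lambda^2_1=0$ and $\lambda^2_2=\varepsilon\,\lambda^1_1$ with $\varepsilon=\pm 1$; set $p:=\lambda^1_1\neq 0$.

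The contradiction finally arises from the equation for $i=8$, which is where $\gamma$ and $\gamma'$ intervene: $de^8=e^{16}+e^{27}$ but $de'^8=e'^{16}+e'^{25}+e'^{27}$. Computing the $e^{14}$-coefficient of $d(f^*(e'^8))=f^*(de'^8)$ gives $\lambda^8_6=\lambda^1_1\lambda^6_4=p^3\lambda^3_2$ (using the formulas just derived); computing the $e^{25}$-coefficient gives instead $\lambda^8_6=\lambda^2_2\lambda^5_5+\lambda^2_2\lambda^7_5=\varepsilon\,p^2\lambda^3_3+p^3\lambda^3_2$, the extra summand $\varepsilon\,p^2\lambda^3_3$ being precisely the contribution of the $e'^{25}$ term in $de'^8$. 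Subtracting the two expressions yields $\varepsilon\,p^2\lambda^3_3=0$, forcing $\lambda^3_3=0$ and contradicting $\det\Lambda\neq 0$.

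The main obstacle is bookkeeping: the system has many entries and the right pair of coefficients to subtract must be identified, but the essential mechanism is that the extra $e'^{25}$ on the right of $de'^8$ has no counterpart in $d e^8$, and once the entries of $\Lambda$ are sufficiently constrained by the lower-index equations this asymmetry cannot be absorbed. (A parallel proof via Chevalley–Eilenberg cohomology, comparing $\dim H^2_{\mathrm{dR}}$ of $\mathfrak m_4^0$ and $\mathfrak m_4^1$ in the spirit of the $\mathfrak m_2^\gamma$ case, would also work and may be presented instead or in addition.)
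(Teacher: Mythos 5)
Your strategy is the paper's own: make $\Lambda$ block triangular via Lemma~\ref{lema-F2}, push $d\circ F=F\circ d$ up through the basis, and derive the contradiction from the extra $e'^{\,25}$ term in $de'^{\,8}$ by comparing the $e^{14}$ and $e^{25}$ coefficients of the $i=8$ equation. The two quadratic relations and the final subtraction are exactly the computations the paper performs. However, there is a genuine gap in the middle. The relations
$(\lambda^1_1)^2+(\lambda^2_1)^2=(\lambda^1_2)^2+(\lambda^2_2)^2$ and $\lambda^1_1\lambda^2_1=\lambda^1_2\lambda^2_2$, together with $\det(\lambda^i_j)_{i,j=1,2}\neq 0$, do \emph{not} force $\lambda^1_2=\lambda^2_1=0$: take $\lambda^1_1=\lambda^2_2=2$, $\lambda^1_2=\lambda^2_1=1$, which satisfies both relations and has determinant $3$. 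What they actually yield, after discarding the branch $(\lambda^1_1)^2=(\lambda^1_2)^2$ (which would kill $\lambda^3_3$), is only $\lambda^2_2=\epsilon\,\lambda^1_1$ and $\lambda^2_1=\epsilon\,\lambda^1_2$ with $\epsilon=\pm1$, leaving $\lambda^1_2$ free. The paper eliminates $\lambda^1_2$ by an \emph{additional} equation taken already from the $i=8$ level, namely the vanishing of the $e^{26}$ coefficient of $d\big(F(e'^{\,8})\big)-F(de'^{\,8})$, which equals $-\big(\lambda^1_2\lambda^6_6+\lambda^2_2\lambda^7_6\big)=-\lambda^1_2\,\lambda^3_3\,\big(3(\lambda^1_1)^2+(\lambda^1_2)^2\big)$ and hence forces $\lambda^1_2=0$. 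Without this step your final subtraction does not close: if $\lambda^1_2\neq 0$, the $e^{14}$ and $e^{25}$ coefficients of the $i=8$ equation pick up further cross terms (for instance $\lambda^1_2\lambda^6_5$ from $F(e'^{\,16})$ and $\lambda^2_1\lambda^7_4$ from $F(e'^{\,27})$), and the difference no longer isolates $\lambda^3_3$. The argument is therefore incomplete as written, although it is repaired precisely by inserting the $e^{26}$ equation before the last step.

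A secondary caution: your proposed cohomological fallback is unsubstantiated. The paper separates $\mathfrak m_2^{0}$ from $\mathfrak m_2^{1}$ by $b_2$, but for $\mathfrak m_4^{\gamma}$ it deliberately falls back on the explicit isomorphism analysis; since $e^{25}$ is closed and the two algebras differ only by adding this closed form to $de^8$, you should not assume $b_2$ distinguishes them without computing it.
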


\begin{proof}
Let $\{e^i\}_{i=1}^8$ and $\{e'^{\,i}\}_{i=1}^8$ be bases of $(\mathfrak g_{12}^{0})^*$ and $(\mathfrak g_{12}^{1})^*$,
respectively, satisfying the corresponding structure equations in Theorem~\ref{intro-main-theorem-2}.
Consider a Lie algebra isomorphism $F$ between $\mathfrak g_{12}^{0}$ and $\mathfrak g_{12}^{1}$ given by~\eqref{cambio-base}.
In what follows, and similarly to Notation~\ref{Fij-complex}, we will denote by $\big[d\big(F(e'^{\,k})\big)-F(de'^{\,k})\big]_{ij}$ the coefficient for $e^{ij}$ in the 2-form $d\big(F(e'^{\,k})\big)-F(de'^{\,k})$.
Observe that
the conditions~\eqref{condiciones} are equivalent to
$$\big[d\big(F(e'^{\,k})\big)-F(de'^{\,k})\big]_{ij}=0, \text{ for every }1\leq k\leq  8  \text{ and }1\leq i<j\leq 8.$$

We first notice that, as a consequence of Lemma~\ref{lema-F2}, one has
\begin{equation*}
\lambda^i_j =0, \text{ for }
	\begin{cases}
	1\leq i \leq 2 \text{ and } 3\leq j \leq 8,\\
	3\leq i \leq 5 \text{ and } 6\leq j \leq 8,
	\end{cases}
\text{ and \ also \ }
\lambda^3_4 = \lambda^3_5 = \lambda^6_8 =\lambda^7_8 =0.
\end{equation*}
Moreover, $\lambda^3_3\lambda^8_8\neq 0$ in order to ensure 
$\Lambda=(\lambda^i_j)_{1\leq i,j\leq 8}\in{\rm GL}(8,\mathbb{R})$.
We next show that:
\begin{equation}\label{L11}
\lambda^2_2 = \epsilon\, \lambda^1_1,\quad \lambda^2_1 = \epsilon\, \lambda^1_2,\quad\text{where } \epsilon=\pm 1.
\end{equation}

Let us observe that $[d(F(e'^{\,3}))-F(de'^{\,3})]_{12}=0$ together with $[d(F(e'^{\,k}))-F(de'^{\,k})]_{13}=0$ and $[d(F(e'^{\,k}))-F(de'^{\,k})]_{23}=0$, for $k=4,5$, give the following expressions:
\begin{equation}\label{de-interes-1}
\lambda^3_3 = \lambda^1_1\lambda^2_2 - \lambda^1_2\lambda^2_1 \ \ (\neq 0),\qquad
\lambda^4_4 = \lambda^1_1\lambda^3_3, \qquad
\lambda^4_5 = \lambda^1_2\lambda^3_3, \qquad
\lambda^5_4 = \lambda^2_1\lambda^3_3, \qquad
\lambda^5_5 = \lambda^2_2\lambda^3_3.
\end{equation}
Furthermore, thanks to $[d(F(e'^{\,k}))-F(de'^{\,k})]_{14}=0$ for $k=6,7$ one can solve
\begin{equation}\label{L66}
\lambda^6_6=\lambda^1_1\lambda^4_4 + \lambda^2_1\lambda^5_4, \qquad
\lambda^7_6 = \lambda^1_1\lambda^5_4 + \lambda^2_1\lambda^4_4.
\end{equation}
Considering these values and those in~\eqref{de-interes-1}, from
$[d(F(e'^{\,k}))-F(de'^{\,k})]_{25}=0$, with $k=6,7$, one gets the system of equations 
\begin{equation}\label{12}
(\lambda^1_1)^2 - (\lambda^1_2)^2 + (\lambda^2_1)^2 - (\lambda^2_2)^2=0,\qquad
\lambda^1_1\lambda^2_1 - \lambda^1_2\lambda^2_2=0.
\end{equation}
If we suppose $\lambda^1_1=0$, then~\eqref{L11} follows directly as a consequence of $\lambda^3_3=- \lambda^1_2\lambda^2_1\neq 0$.  Otherwise, one can solve $\lambda^2_1 = \frac{\lambda^1_2 \lambda^2_2}{\lambda^1_1}$
using the second equation in~\eqref{12}, and then
$$\lambda^3_3 = \frac{\lambda^2_2}{\lambda^1_1} \left((\lambda^1_1)^2 - (\lambda^1_2)^2\right).$$ In particular, one observes
that $\lambda^2_2\neq 0$ and $|\lambda^1_2| \neq |\lambda^1_1|$.  If we now substitute the value of $\lambda^2_1$ in the first equation of~\eqref{12}, a quartic equation in $\lambda^1_1$ arises. Its only valid solutions are $\lambda^1_1 = \epsilon\, \lambda^2_2$, where $\epsilon = \pm1$, thus $\lambda^1_2 = \epsilon\, \lambda^2_1$ and we get~\eqref{L11}.

As a consequence of the values found for $\lambda^2_1$ and $\lambda^2_2$, one should note that~\eqref{de-interes-1} and \eqref{L66} become:
$$
\lambda^3_3 = \epsilon\,\big((\lambda^1_1)^2-(\lambda^1_2)^2\big),\qquad
\lambda^4_4 = \epsilon\,\lambda^5_5=\lambda^1_1\lambda^3_3, \qquad
\lambda^4_5 = \epsilon\,\lambda^5_4= \lambda^1_2\lambda^3_3,$$
$$
\lambda^6_6=\lambda^3_3\,\big( (\lambda^1_1)^2 + (\lambda^1_2)^2 \big), \qquad
\lambda^7_6 = 2\,\epsilon\,\lambda^1_1\lambda^1_2 \lambda^3_3.
$$
Then, one has
$$0=[d(F(e'^{\,8}))-F(de'^{\,8})]_{26}=-\big(\lambda^1_2\lambda^6_6 + \lambda^2_2\lambda^7_6\big)=
-\lambda^1_2\,\lambda^3_3\,\big( 3(\lambda^1_1)^2 + (\lambda^1_2)^2 \big).$$
Since $\lambda^3_3\neq 0$, we are forced to take
$\lambda^1_2 = \lambda^2_1 = 0.$
Therefore, from $[d(F(e'^{\,6}))-F(de'^{\,6})]_{13} =0$ and $[d(F(e'^7))-F(de'^7)]_{23} =0$, we solve
$$\lambda^6_4 = \lambda^1_1\lambda^4_3,\qquad  \lambda^7_5 = \epsilon\,\lambda^1_1\lambda^4_3,$$
and then we get
\begin{equation*}
\begin{split}
0 &=[d(F(e'^{\,8}))-F(de'^{\,8})]_{14} = \lambda^8_6-\lambda^1_1\lambda^6_4
	=\lambda^8_6-(\lambda^1_1)^2\lambda^4_3,\\
0 &=[d(F(e'^{\,8}))-F(de'^{\,8})]_{25} = \lambda^8_6-\lambda^2_2\,(\lambda^5_5+\lambda^7_5)
	=\lambda^8_6-(\lambda^1_1)^2\,(\lambda^3_3+\lambda^4_3).
\end{split}
\end{equation*}
If we solve $\lambda^8_6$ from the first equation above and then replace it in the second one, it suffices to
recall that $\lambda^3_3=\epsilon\,(\lambda^1_1)^2$ to conclude $\lambda^1_1=0$. However, this is not
possible.
\end{proof}

\section{Abelian $J$-invariant ideals}\label{ideales-abelianos}

\noindent As an application of the previous results, in this section we study the existence of non-trivial abelian $J$-invariant ideals. 

\smallskip
Let $\frg$ be a $2n$-dimensional nilpotent Lie algebra endowed with a complex structure $J$. It is clear that, if $J$ is quasi-nilpotent, then $\fra_1(J)\subseteq Z(\frg)$ is a non-trivial abelian $J$-invariant ideal in $\frg$ (see Definition~\ref{tipos_J}). Furthermore, it is proved in \cite[Proposition~1]{LU-procBulgaria} that every $(\frg,J)$, with $J$ either quasi-nilpotent or SnN, has a non trivial $J$-invariant abelian ideal for $n \leq 3$. In the next result, we complete the classification up to eight dimensions.
%
It is worthy to remark that, as a consequence, there are infinitely many (non-isomorphic) 8-dimensional nilpotent Lie algebras $\frg$ for which the only 
abelian $J$-invariant ideal is the trivial one. Moreover, in these cases, this happens for every $J$ defined on $\frg$.

\begin{theorem}\label{th-clasif-ideales}
Let $\frg$ be an NLA of dimension $\leq 8$ endowed with a complex structure $J$. Then, $\frg$ has a non-trivial abelian $J$-invariant ideal if and only if $\frg$ is not isomorphic to the Lie algebras $\frg_9^0$, $\frg_9^1$, or $\frg_{11}^{\alpha,\beta}$.
\end{theorem}

\begin{proof}
It suffices to prove the result for the case of an $8$-dimensional NLA $\frg$ endowed with an SnN complex structure $J$. In this case, note
that $J$ belongs to either Family I or Family II. Hence, up to equivalence, we are reduced to the complex structures equations obtained in Theorem~\ref{main-theorem}. 
Let us denote $\{ Z_j,\bar{Z}_j \}_{j=1}^4$ the complex basis 
dual to $\{ \omega^j,\omega^{\bar j} \}_{j=1}^4$, and consider the real basis 
$\{ X_j, Y_j \}_{j=1}^4$ for $\frg$ given by $X_j=Z_j+\bar{Z}_j$ and $Y_j=JX_j=i\,(Z_j-\bar{Z}_j)$. 

From the structure equations \eqref{FI-SnN} in Family I we easily get that 
$\frf=\langle X_3, X_4, Y_3, Y_4 \rangle$ is a non-trivial $J$-invariant ideal in $\frg$ which is abelian. Similarly, the structure equations \eqref{FII-SnN} in Family II for $\mu=0$ imply that 
$\frf=\langle X_2,X_3, X_4,Y_2, Y_3, Y_4 \rangle$ is a non-trivial abelian $J$-invariant ideal in $\frg$. 

Hence, from now on, we suppose that the complex structure $J$ belongs to Family II with $\mu=1$. 
Again, according to Theorem~\ref{main-theorem}, we have to study the complex structures in the following three particular cases: $(1,1,0,a,b)$, $(0,1,0,0,0)$ and $(0,1,0,1,0)$. 
Next we prove that the only abelian $J$-invariant ideal in $\frg$ is the trivial one.

From the structure equations \eqref{FII-SnN} for $\mu=1$, a direct calculation gives the following (non-zero) brackets for the basis $\{ X_j, Y_j \}_{j=1}^4$:
\begin{equation}\label{brackets}
	\begin{split}
		[X_1,X_2]&=-3\varepsilon\,X_3-2b\,Y_4,\\
		[X_1,X_3]&=-2\,Y_4,\\
		[X_1,X_4]&=-2\,X_2,\\
		[X_1,Y_1]&=2a\,Y_3,\\
	\end{split}
	\qquad
	\begin{split}
		[X_1,Y_2]&=-\varepsilon\,Y_3,\\
		[X_2,X_4]&=-2\,Y_3,\\
		[X_2,Y_1]&=\varepsilon\,Y_3,\\
		[X_2,Y_2]&=-2\,Y_4,\\
	\end{split}
	\qquad\quad
	\begin{split}
		[X_4,Y_1]&=2\,Y_2,\\
		[X_4,Y_2]&=-2\,X_3,\\
		[Y_1,Y_2]&=-\varepsilon\,X_3-2b\,Y_4,\\
		[Y_1,Y_3]&=-2\,Y_4.
	\end{split}
\end{equation}

Let $\frf$ be an abelian $J$-invariant ideal in the Lie algebra $\frg$. Hence, any $U\in\frf$ satisfies in particular that $JU\in\frf$ and $[U,JU]=0$. Let us write $U$ in terms of the real basis above:
\begin{equation*}
U=\sum_{k=1}^4 \big(c_k\,X_k + d_k\,Y_k\big), \text{ where }c_k, d_k\in\mathbb R.
\end{equation*} 
By a direct calculation using~\eqref{brackets}, we get
\begin{equation*}
\begin{array}{rl}
[U,JU]=&-2(c_4d_1-c_1d_4)X_2
-2\left((c_2c_4+d_2d_4)-2\varepsilon(c_1d_2-c_2d_1)\right)X_3\\[5pt]
&+2(c_1c_4+d_1d_4)Y_2+2\left(a(c_1^2+d_1^2)+(c_2d_4-c_4d_2)\right)Y_3\\[5pt]
&-2\left(2\,(c_3d_1-c_1d_3)
-2b(c_1d_2-c_2d_1)+(c_2^2+d_2^2)\right)Y_4.
\end{array}
\end{equation*}
Hence, the condition $[U,JU]=0$ implies that the coefficients of $X_2$ and $Y_2$ in the expression above are zero, so in particular we get  
$c_4(c_1^2+d_1^2)=0=d_4(c_1^2+d_1^2)$. 

If we suppose $c_1=d_1=0$, then the vanishing of the coefficient of $Y_4$ in $[U,JU]$ implies $c_2=d_2=0$, so  
$\frf \subset \langle X_3,X_4,Y_3,Y_4 \rangle$. From \eqref{brackets} and the fact that $\frf$ is an ideal,  we have  
$[U,X_1]=2c_4X_2+2c_3Y_4 \in \frf$, which implies $c_4=0$. 
Similarly, $[JU,X_1]=-2d_4X_2-2d_3Y_4\in \frf$ implies $d_4=0$. 
Thus, $\frf \subset \langle X_3,Y_3 \rangle$ and the same argument gives $c_3=d_3=0$, so $U=0$ and the ideal $\frf$ is zero. 

If we now let $c_1^2+d_1^2\neq0$, then $c_4=d_4=0$ and 
$\frf \subset \langle X_1,X_2,X_3,Y_1,Y_2,Y_3 \rangle$. 
From \eqref{brackets} and the fact that $\frf$ is an ideal,  we have  
$[U,X_3]=-2c_1Y_4$ and $[U,Y_3]=-2d_1Y_4 \in \frf$, which implies $c_1=d_1=0$. 
However, this contradicts the hypothesis $c_1^2+d_1^2\neq0$.
\end{proof}

\section*{Acknowledgments}
\noindent
This work has been partially supported by the projects PID2020-115652GB-I00
(AEI/FEDER, UE),
and E22-17R ``Algebra y Geometr\'{\i}a'' (Gobierno de Arag\'on/FEDER).


\end{document}